\definecolor{med-gray}{gray}{0.5}
\definecolor{gray1}{gray}{0.85}
\definecolor{gray2}{gray}{0.90}
\definecolor{gray3}{gray}{0.64}
\definecolor{gray4}{gray}{0.55}
\definecolor{verylight-yellow}{rgb}{1,1,0.7}
\definecolor{yellow}{rgb}{1,1,0.2}
\definecolor{vivid-blue}{rgb}{0.2,0,1}
\definecolor{light-pink}{rgb}{1,0.8,1}
\definecolor{med-pink}{rgb}{1,0.6,1}
\definecolor{aqua}{rgb}{0.0, 1.0, 1.0}
\definecolor{light-gray}{rgb}{0.5, 0.9, 0.5}
\theoremstyle{plain}
\newtheorem{theorem}{Theorem}[section]
\newtheorem{proposition}[theorem]{Proposition}
\newtheorem{corollary}[theorem]{Corollary}
\newtheorem{lemma}[theorem]{Lemma}
\theoremstyle{definition}
\newtheorem{definition}[theorem]{Definition}
\newtheorem{remark}[theorem]{Remark}
\newtheorem{principle}[theorem]{Principle}
\newtheorem{example}[theorem]{Example}
\newtheorem{question}[theorem]{Question}
\newtheorem*{ack}{Acknowledgment}
\numberwithin{equation}{section}
\numberwithin{table}{section} 
\DeclareMathOperator{\Gor}{Gor}
\DeclareMathOperator{\Grass}{\rm{Grass}}
\newcommand{\Aut}{\ensuremath{\mathit{Aut}}}
\newcommand{\df}{\ensuremath{\mathrm{Diff}}}
\newcommand{\Hom}{\ensuremath{\mathit{Hom}}}
\newcommand{\kk}{\ensuremath{\mathsf{k}}}
\newcommand{\maxA}{\ensuremath{\mathfrak{m}_A}}
\newcommand{\maxR}{\ensuremath{\mathsf{m}_R}}
\DeclareMathOperator{\Ann}{\mathrm{Ann}}
\DeclareMathOperator{\lt}{lt}
\DeclareMathOperator{\ord}{ord}
\def\Gr{\mathrm{Gr}}
\def\Gor{\mathrm{Gor}}
\def\cha{\mathrm{char}\ }
\def\Hom{\mathrm{Hom}}
\def\\k{{\mathsf{k}}}
\def\Hilb{\mathrm{Hilb}}
\def\Grass{\mathrm{Grass}}
\def\<{\left<}
\def\>{\right>}
\def\lt{\mathrm{lt}}
\def\ns{\footnotesize \it}
\def\m{\mathfrak{m}}
\def\max{\mathrm{max}}
\title{Symmetric Decomposition of  the Associated Graded Algebra of an Artinian Gorenstein Algebra\footnote{\textbf{Keywords}: Artinian Gorenstein, local algebra, Gorenstein sequence, symmetric decomposition,  deformation, Hilbert function, reflexive module, inverse system, dual generator, normal form, parametrization, connected sum. \textbf{2010 Mathematics Subject Classification}: Primary: 13H10;  Secondary: 13E10, 14B07, 14C05. Email addresses: \texttt{a.iarrobino@northeastern.edu}, \texttt{pmm@uevora.pt}}}
\author{
Anthony Iarrobino\\[.05in]
{\ns Department of Mathematics, Northeastern University, Boston, MA 02115,
USA.
}\\[.2in] Pedro Macias Marques\\[.05in]
{\ns Departamento de Matem\'{a}tica, Escola de Ci\^{e}ncias e Tecnologia, Centro de Investiga\c{c}\~{a}o}\\[-.05in]
{\ns  em Matem\'{a}tica e Aplica\c{c}\~{o}es, Instituto de Investiga\c{c}\~{a}o e Forma\c{c}\~{a}o Avan\c{c}ada,}\\[-.05in]
{\ns Universidade de \'{E}vora, Rua Rom\~{a}o Ramalho, 59, P--7000--671 \'{E}vora, Portugal}\\
}
\date{December 8, 2018, revised July 28, 2020}
\begin{document}

\newcounter{previouscounter}

\newenvironment{previous}{\stepcounter{previouscounter}\vspace{.5em}\texttt{Previous version (\thepreviouscounter):}\par 
\begin{tabular}{p{.97\textwidth}}\hline}{\\\hline\end{tabular}}

\maketitle
\begin{abstract}
We study the symmetric subquotient decomposition of the associated graded algebras $A^\ast$  of a non-homogeneous commutative  Artinian Gorenstein (AG) algebra $A$. This decomposition arises from the stratification of $A^*$ by a sequence of ideals $A^*=C_A(0)\supset C_A(1)\supset\cdots $ whose successive quotients ${Q(a)=C(a)/C(a+1)}$ are reflexive $A^*$ modules. These were introduced by the first author \cite{I6,I5}, developed in the Memoir \cite{I1}, and have been used more recently by several groups, especially those interested in short Gorenstein algebras, and in the scheme length (cactus rank) of forms.\par For us a \emph{Gorenstein sequence} is an integer sequence $H$ occurring as the Hilbert function ${H=H(A)}$ for an AG algebra $A$, that is not necessarily homogeneous.   Such a Hilbert function $H(A)$ is the sum of symmetric non-negative sequences ${H_{A}(a)=H\bigl(Q_A(a)\bigr)}$, each having center of symmetry ${(j-a)/2}$ where $j$ is the socle degree of $A$: we call these the 
\emph{symmetry conditions}, and the decomposition ${\mathcal{D}(A)=\bigl( H_A(0), H_A(1),\ldots \bigr)}$ the \emph{symmetric decomposition} of $H(A)$ (Theorem \ref{mainoldthm}).
We here study which sequences may occur as the summands $H_A(a)$: in particular we construct in a systematic way examples of AG algebras $A$ for which $H_A(a)$ can have interior zeroes, as ${H_A(a)=(0,s,0,\ldots ,0,s,0)}$. We also study the symmetric decomposition sets $\mathcal{D}({A})$, and in particular determine which sequences $H_A(a)$ can be non-zero when the dual generator is linear in a subset of the variables (Theorem \ref{DArestrictthm}). \par
Several groups have studied ``exotic summands'' of the Macaulay dual generator $F$: these are summands that involve more successive variables than would be expected from the symmetric decomposition of the Hilbert function $H(A)$. Studying these, we recall a normal form for the Macaulay dual generator of an AG algebra that has no ``exotic'' summands (Theorem~\ref{normalform5thm}). We apply this to Gorenstein algebras that are connected sums (Section~\ref{connectedsumsec}). \par
We give throughout many examples and counterexamples, and conclude with some open questions about symmetric decomposition.
\end{abstract}
\tableofcontents
\section{The quotients $R/\mathcal{C}_A(a)$ determined by an Artinian Gorenstein algebra,  and Macaulay inverse systems.}\label{intro1sec}
Let $\kk$ be an arbitrary field, $R$ the completed local ring $R=\kk\{x_1,\ldots ,x_r\}$ in $r$ variables, and denote by $\mathfrak{D}=\kk_{DP}[X_1,\ldots, X_r]$ the divided power algebra in $X_1,\ldots ,X_r$.  Here $R$ acts on $\mathfrak{D}$ by contraction.
F.H.S. Macaulay \cite{Mac1} showed that giving an ideal $I$ of $R$ defining an Artinian quotient ${A=R/I}$ of length ${\dim_{\kk}A=n}$ is  equivalent to giving a length-$n$ $R$-submodule  $\hat {A}$ of the divided power algebra $\mathfrak{D}$.  Given the ideal $I$, we call the $R$-module $\hat {A}$ the \emph{Macaulay inverse system}\index{Macaulay inverse system}	\index{Macaulay inverse system|see {dual generator}} ${\hat{A}=I^\perp}$ of $I$; since $I$ acts as zero, $\hat {A}$ is a module over  $A$. An $R$-closed length-$n$ submodule $M$ of $\mathfrak{D}$ is a module over the Artinian algebra ${A=R/\Ann_RM}$ (see Lemma~\ref{Macduallem}). 
The Artin algebra $A$ is \emph{Gorenstein}\index{Artinian Gorenstein algebra} when the inverse system $\hat{A}$ has a single generator $f_{A}$: then $f_A$ is unique up to multiple by a differential unit (i.e.\ action by a unit in $R$). Let $\maxA$ denote the maximum ideal of $A$. We define the \emph{socle degree}\index{socle degree} $j_A$ of $A$ as
\begin{equation}
j_A=\max\{ i\mid \maxA^{\,i} \neq0\}, 
\end{equation}
Then the degree ${\deg f_A=j_A}$ and $f_A$ is a generator of the cyclic $A$-module $\hat{A}$:  we will call $f_A$ a \emph{dual generator} of $A$.
The first author showed in \cite{I6,I5,I1} that the associated graded algebra ${A^\ast=\Gr_{\maxA}(A)}$ of an Artinian Gorenstein algebra $A$ has a canonical stratification by ideals ${C(a)=C_A(a)}$ whose successive quotients ${Q(a)=Q_A(a)\cong C(a)/C(a+1)}$ are reflexive $A^*$ modules: there is an exact pairing 
\begin{equation}\label{reflexiveeq}
Q(a)_i\times Q(a)_{j_A-a-i}\to \kk,
\end{equation}
induced by the exact pairing $A\times A\to {\kk}$, $(h, h')\mapsto (hh'\circ f_A)_{0}$
(\cite[Theorem 1.5]{I1}).
We term these $Q(a)$ the \emph{symmetric subquotients} of  $A^\ast$.
\index{subquotient $Q_A(a)$}
The Hilbert function $H(A)$ may be written as a sum of symmetric sequences ${H_A(a)=H\bigl(Q_A(a)\bigr)}$, each with a center of symmetry at  $(j_A-a)/2$: we term these sequences the \emph{symmetric components} of the  Hilbert function. The \emph{symmetric decomposition}\index{symmetric decomposition of Hilbert function} $\mathcal{D}(A)$ of $H(A)$ is the sequence (of sequences)
\begin{equation}\label{symdecomp1eq}
\mathcal{D}(A)=\bigl(H_A(0),H_A(1),\ldots ,H_A(j_A)\bigr).
\end{equation}
The components $H(a)$ satisfy the \emph{symmetry conditions}
\index{symmetry conditions on $H(a)$}
\begin{equation}\label{symmetrycondeq}
H(a)_i=H(a)_{j-a-i}\text { for } 0\le i\le j-a,\ 0\le a\le j-2.
\end{equation}
and as well the \emph{Macaulay conditions}\index{Macaulay conditions} that for each $a$, $0\le a\le j-2$,
\begin{equation}\label{Macaulaycondeq}
\sum_{i=0}^a H_A(i) \text { is an O-sequence}\footnote{An O-sequence $H$ is one that occurs as the Hilbert function $H=H(R/M)$ where $M$ is a monomial ideal, which may be assumed to be lex-initial. This is equivalent to $H$ satisfying certain well-known growth conditions shown by Macaulay in \cite{Mac2} -- see also \cite{Wh,BrHe}}
\end{equation}
since it occurs as the Hilbert function of the Artinian algebra $R/C_A(a+1)$. 
\index{O-sequence}\par
Furthermore, writing the dual generator $f_A=f_j+f_{j-1}+\cdots$ the sequence $H_A(0)$ satisfies
\begin{equation}\label{HA0eq}
H_A(0)=H(R/\Ann f_j),
\end{equation}
so is the Hilbert function of a graded Gorenstein algebra of socle degree $j_A$.\par 
The stratification $C_A(a)$ of $A^\ast$ is determined by the AG algebra $A$, but is not in general determined solely by the associated graded algebra $A^\ast$, except in two variables, or when $H(A)=H_A(0)$ (this equality is equivalent to $H(A)$ being symmetric about $j/2$ by Lemma~\ref{Watanabelem}). That is, two different AG algebras $A$, $B$ may have the same associated graded algebra $A^\ast=B^\ast$, but determine different stratifications of $A^\ast$ with $C_A(a)\ncong C_B(a)$ for some positive integer $a$. Also, the same Hilbert function $H(A)=H( B)$ for two Artinian Gorenstein algebras $A$, $B$ may have different symmetric decompositions $\mathcal{D}(A)\not=\mathcal{D}(B)$
(Remark~\ref{diffdecomprem}, Examples~\ref{AG=ex}, \ref{simple=AG}).\par
The symmetric decomposition structure has very recently been studied or used in articles by  A. Bernardi and K.~Ranestad \cite{BR},  J. Elias and M. Rossi \cite{ER1,ER}, J. Jelisiejew \cite{Je},  G. Casnati and R.~Notari \cite{CN},  Y. H. Cho with the first author \cite{ChI}, and others, as \cite{BJMR,BJJM,CJN,MR}.\vskip 0.2cm
It is natural to ask if there are further conditions on the symmetric decompositions $\mathcal{D}(A)$ so on the component Hilbert functions $H(A)$ for AG algebras, besides the symmetry conditions \eqref{symmetrycondeq}, the Macaulay conditions \eqref{Macaulaycondeq}, and the condition \eqref{HA0eq} that $H_A(0)$ be a graded Gorenstein sequence. S. Masuti and M. Rossi show that all socle degree four decomposition sequences $\mathcal{D}$ satisfying the three conditions are realizable -- occur as some $\mathcal{D}(A)$ for an AG algebra $A$ \cite[\S 3.6,3.7]{MR}. Lists of realizable decompositions are proposed in \cite[\S 5F]{I1} for lengths $n\le 16$, and for lengths $n\le 21$ in embedding dimension at most three.\footnote{The author's intent as stated in \cite{I1} was that these lists be complete, and discussion is given in \cite[\S 5F]{I1} to justify that the ones listed can be realized, but they may need further checking for completeness.} \par 
When $A$ is assumed homogeneous, the sequences $H$ that occur as the Hilbert function $H(A)$ are known for codimension ${r\le 3}$, the case ${r=3}$ being a consequence of the D.~Buchsbaum and D.~Eisenbud Pfaffian structure theorem \cite{BuEi,D,IKa,CoVa}.  Also the specialization behavior of generator-relations strata of  codimension three graded AG algebras of fixed Hilbert function $H$ is understood (\cite{D}, \cite[\S 4]{IKa}). However, when ${r=4}$ the longstanding conjecture that for graded AG, algebras, the Hilbert functions $H(A)$ satisfy an additional SI condition\footnote{A sequence $H=(h_0,\ldots,h_j)$ of positive integers is SI if the first differences of the first half $(h_0,h_1,\ldots, h_{\lfloor j/2\rfloor})$ of $H$ is an O-sequence. See \cite{Har,MNZ}.} 
is still open \cite{MNZ,ElKSr}.\par
For non-homogeneous AG algebras, the decomposition $\mathcal{D}(A)$ is completely understood for codimension ${r=2}$ (where it depends only on $H(A)$); but in codimension ${r\ge 3}$, even for the case of a complete intersection, $\mathcal{D}(A)$ and $H(A)$ are not well understood.  In this paper our goal is to shed some further light on the possibilities for $\mathcal{D}(A)$, especially when ${r=3}$, the first open case, or when the dual generator $f$ is linear in some of the variables.\vskip 0.2cm
Given a partial symmetric decomposition $\mathcal{D}_{<a}=\bigl(H_A(0),H_A(1),\ldots ,H_A(a-1)\bigr)$ that occurs for some AG algebra $A$, and a specified codimension $r$ (so specifying the ring $R$) there is a sharp upper bound $M(a,{\mathcal{D}}_{<a})$ (see \cite[Theorem 3.2(A), 3.2(B)]{I1}), such that $H(A)_a\le M(a,{\mathcal{D}}_{<a})$ termwise and such that we have the termwise inequality\footnote{We say sequences $H,H'$ satisfy $H\le H'$ if $H(i)\le H'(i)$ for each integer $i\ge 0$ (termwise inequality).}
\begin{equation}\label{Maupperbdeq}
H(A)\le \left( \sum_{u=1}^{a-1} H_A(u)\right)+M(a,{\mathcal{D}}_{<a})
\end{equation}
If $H(A)$ achieves this upper bound of \eqref{Maupperbdeq} then $H_A(u)=0$ for $u>a$. Furthermore, the upper bound can be achieved simply by adding a general enough or generic\footnote{The phrase \textit{general enough} $h$, will denote for us an element $h$ in an open dense subset of an irreducible parameter space. The phrase \textit{generic} refers to using variables as parameters.} degree ${j-a}$ form ${h_{j-a}\in \mathfrak{D}_{j-a}}$ to the dual generator $f_A$ of $A$, obtaining a new dual generator ${F_{A'}=f_A+h_{j-a}}$ and a new Gorenstein algebra ${A'=R/\Ann F_{A'}}$.   We restate this result for convenience in Section~\ref{amodsec} (Proposition~\ref{maxprop}). \par
In \cite[Theorem 2.2]{I1} it is shown that in codimension two ($R={\kk}\{x,y\})$ each $Q(a)$ is a  reflexive $A^\ast$ module having a single generator, so is cyclic.  However, in \cite[Ex.\ 1.6, 4.7]{I1} a codimension-three complete intersection was given with $H(2)=(0,1,0,1,0)$, so $Q_A(2)$ is non-cyclic. In this example $H(0)=(1,1,1,1,1,1)$, $H(1)=(0,1,2,2,1,0) $ and $H(A)=(1,3,3,4,2,1,1)$.\footnote{The AG algebras of Hilbert function $H=(1,3,3,4,2,1,1)$ are determined below in Proposition \ref{prop1.24}. }
When $H_A(a)$ has interior zeroes, then $Q_{A}(a)$ must be generated in several different degrees, so $Q_A(a)$ is a non-cyclic $A^\ast$ module, in contrast to the situation for two variables.\par
The result that there is a termwise upper bound $M(a,{\mathcal{D}}_{<a})$ to $H_A(a)$ given a partial decomposition $\mathcal{D}_{< a}(A)$ (e.g.\ Equation \eqref{Maupperbdeq} and Proposition \ref{maxprop}), and the above example where $H(2)=(0,1,0,1,0)$, suggest the following more general question. Let $R={\kk}\{x_1,x_2,\ldots, x_r\}$. 
\vskip 0.2cm\noindent
\textbf{Question 1}. Given an AG algebra $A=R/I$ such that $H(A)$ has the given partial decomposition ${\mathcal{D}}_{<a}(A)$, what symmetric sequences are possible as the next component Hilbert function $H_{A'}(a)$ for a deformation $A'$ of $A$ with $\mathcal{D}'_{<a}(A')= \mathcal{D}_{<a}(A)$? In particular,\par
(a) Given a graded Gorenstein sequence $H(0)$ and an integer ${a>0}$ can we always find an AG algebra $A$ such that ${H_A(0)=H(0)}$ and $H_{A}(a)=(0,\ldots,0,s,0\ldots,0, s,0,\ldots ,0)$  has a subsequence of interior zeroes? What other sequences $H_A(a)$ can we construct?\par
(b) What restrictions does $\mathcal{D}_{< a}(A)$ impose on $\mathcal{D}_{\ge a}=(H_A(a),H_A(a+1),\ldots $)?Œ\vskip 0.2cm
In Section \ref{constructsec} we explain how we use the dual generator to determine AG algebras. We define subquotients $Q_A^\vee (a)_{\mathfrak{D}}$ of the associated graded module $\Gr\bigl(\Hom(A,\kk)\bigr)$ of the dual to $A$ (Lemma \ref{Qdualdescription} and Definition \ref{Wuvdef}). We then explain how we use the dual generator to choose $Q^\vee(a)_{\mathfrak{D}}$ and hence to determine AG algebras, in Examples \ref{stdex} and \ref{varydualex}.\par
In Section \ref{Hasec} we consider deformations of ${A=R/\Ann f}$ to ${A'=R/\Ann F}$ where $F=f+Zh$ with ${f,h\in \mathfrak{D}}$ and $Z$ is a new variable. We give a general way to construct such examples of AG algebras for which $H_{A'}(a)=(0,1,0, \ldots, 0,1,0)$. Also, using $F=f+\sum_{i=1}^s Z_ih_i$, we give examples for which $H_{A'}(a)=(0,s,0, \ldots, 0,s,0)$
(Proposition~\ref{noncyclic1prop}). Our process explains and greatly generalizes \cite[Examples\ 1.6 and 4.7]{I1} which, at the time, seemed quite mysterious. 
Understanding better this mystery was a main motivation for us in writing this paper. In Proposition~\ref{noncyclic1prop} the forms $h_1,\ldots ,h_s$ in $F$ all have the same degree $k$, which is one greater than the maximum degree $i=k-1$ for which $(H_A)_{i}=r_{i}$ where $r_i=\dim_{\kk}R_i$: that is, $(R\circ f)_{k-1}=\mathfrak{D}_{k-1}$, so the partials of $f$ fill  the available space. This determines the special form of the Hilbert function $H_{A^\prime}(a)$ of the deformation $F$ of $f$. \par
Because of this restriction on the degrees of $h_1,\ldots, h_s$, those examples are still quite special.  In Section~\ref{restrictsymdecompsec} we show conversely that for dual generators $F=f_j+\cdots+f_{j-a+1}+f_{j-a}+\cdots $ with ${f_j+\cdots+f_{j-a+1}\in \mathfrak{D}}$ (variables  $X_1,\ldots ,X_r$) but $f_{j-a}\in \mathfrak{E}=\mathfrak{D}_{DP}[Z_1,\ldots ,Z_s]$ with ${a\ge 1}$ then in order for $H_A(a)$ to have interior zeroes $f_{j-a}$ must be linear in the new variables $Z_1,\ldots ,Z_s$ (Lemma~\ref{linearZlem}). These results together give a positive answer to Question~1(a).\par
We then study more general AG algebras $A_F$ whose dual generators satisfy  $F=f+\sum_{i=1}^s Z_ih_i$, where the $Z_i$ are (new) variables, and where $f,h_1,\ldots, h_s$ are homogeneous elements of the dual module $\mathfrak{D}$ of $R$ that have specified -- possibly different -- degrees. In one of our main results, we characterize in terms of the degrees of $h_1,\ldots,h_s$, which component symmetric decomposition Hilbert functions $H_{A_F}(a)$ can be non-zero; and we as well determine the non-zero graded symmetric modules $Q^\vee (a)$. Equivalently, we determine the possible symmetric subquotients $Q(a)$ of $A_F^\ast$ (Theorem~\ref{DArestrictthm}).   \vskip 0.2cm
In \cite{I1}  examples of non-graded Artinian Gorenstein algebras were constructed in steps by first choosing the top degree form $f_j$ of the dual generator $f$ then $f_{j-1}, f_{j-2},\ldots $ in order to attain a desired Hilbert function decomposition $\mathcal{D}$ (see Definition \ref{Ddef}). 
The method uses the result that $\mathcal{D}_{\le a}(A)$ depends only on $f_j+\cdots +f_{j-a}$ (Corollary \ref{partialdecompcor} and Principle \ref{principlefj-a}). It works particularly well when $H_A(a)$ is to be the maximum possible sequence $M(a,{\mathcal{D}}_{<a})$, given the sequence $H_A(0), H_A(1),\ldots, H_A(a-1)$ and the codimension of $A$. It also works well when $f$ is a sum of powers of suitably general linear forms \cite[Theorem 5.8]{I1}. This step-by-step construction was used to give tables of the actually occurring symmetric Hilbert function decompositions for AG algebras of small enough lengths \cite[Appendix 5F]{I1}. However, at each step the algebra determined by the dual generator $f_j+\cdots +f_{j-a}$ may well have some higher component Hilbert function ${H(u)\neq0}$ for ${u>a}$, even though $f_j+\cdots +f_{j-a}$ determines $\mathcal{D}_{\le a}(A)=\bigl( H_A(0),\ldots ,H_A(a) \bigr)$ uniquely (see Section \ref{cautionsec}, Examples \ref{cautionex1}, \ref{cautionex2}). \par
In Section \ref{nonubiquitycod4sec} we partially answer Question 1(b) by showing that $\mathcal{D}_{\le 1}(B)$ being specified may force $\mathcal{D}_2(B)$ to be non-zero if $B$ is determined by a dual form $G=G_j+G_{j-1}$ (Example~\ref{nonubiq4ex} and Theorem \ref{nonubiq4thm}). We call this partial non-ubiquity as we restrict the algebras $B$ considered. The non-ubiquity of a symmetric decomposition would say that given any pair $\bigl(a,\mathcal{D}(A)\bigr)$ there is no AG algebra $B$ such that $\mathcal{D}_{\le a}(A)=\mathcal{D}(B)$, the full symmetric decomposition for $B$. It is open whether there is a pair $\bigl(a,\mathcal{D}(A)\bigr)$ for an $ a\ge 1$ that is non-ubiquitous.
\index{ubiquity} 
\vskip 0.2cm\noindent  We next consider 
\vskip 0.2cm\noindent
\textbf{Question 2}. Is there a normal or canonical form for the dual generator of an AG algebra $A$, up to isomorphism? Can we parametrize the isomorphism classes of algebras?\vskip 0.2cm
Our work on this question is in Section \ref{tail sec}. We first recall the adjoint linear map on $\mathfrak{D}$ to an automorphism of $A$. In Section \ref{normalformsec} we amplify and make more precise the proof of the structure result \cite[Theorem 5.3]{I1} that gives a weak canonical form for the dual generator of an AG algebra $A$, up to isomorphism. This has also been studied in \cite{ER,Je,CJN}.  ``Exotic summands'' of the dual generator were introduced by A. Bernardi and K. Ranestad in a preprint that led to \cite{BR}; with J.~Jelisejew and the second author they studied these further in \cite{BJMR}. In Section~\ref{exoticsumsec} we show -- as was pointed out by J. Jelisiejew -- that the weak canonical form of the dual generator given  in \cite[Theorem~5.3]{I1} implies that an AG algebra is isomorphic to one whose inverse system has no ``exotic summands''  (Theorem \ref{normalform5thm}).  We then in Section \ref{connectedsumsec} discuss connected sums -- where the dual generator $F$ is the sum of terms in distinct variables. In particular, we view some results of  \cite{ACY}  through the lens of the weak canonical form.\par
By taking up the above two questions, our intent is to deepen the study of symmetric decompositions for Artinian Gorenstein algebras, and to suggest new problems.
Our main tools are  inverse systems (see \cite{Mac1,Em,Mo,MS}) and the particular linear and sometimes non-linear algebra that comes up in studying symmetric decompositions for AG algebras whose dual generators are either $F=f+Zh$ or more generally $F=f+\sum_{i=1}^s Z_ih_i$, where $ f,h, h_i\in \mathfrak{D}$ and $Z, Z_1,\ldots Z_s$ are new variables. We also use the theory of  compressed algebras \cite{Sch,I2,FrLa}.  We include a Section \ref{cautionsec} of cautionary examples, and we provide as well throughout many examples illustrating our work. We hope that not only our results but also our methods, and as well the many open questions and problems we include might be useful to others.\par
\subsubsection{Brief Outline.}
The paper is organized as follows: in Sections \ref{basicssec} to \ref{constructsec}, we review the basic theory and describe the tools we use throughout the paper; we include a list of cautionary examples (Section~\ref{cautionsec}). In Section \ref{Hasec} we present a systematic way of constructing AG algebras whose Hilbert function admits a symmetric summand of type $(0,s,0,\ldots,0,s,0)$. In Section \ref{AGA2Hfsec} we give examples of associated graded algebras having two different Hilbert function decompositions. In Section \ref{restrictsymdecompsec} we explore the relation between the linearity of the dual generator in some variables to the existence of interior zeroes in a symmetric summand of the Hilbert function.  

In Sections \ref{normalformsec} and \ref{exoticsumsec} we recall a normal form theorem from \cite{I1}; we include further details, and we discuss the relation of the normal form with the removal of exotic summands, up to isomorphism. Section \ref{isomclasssec} includes an example where the Hilbert function of $A=R/\Ann f$ depends on the characteristic of the field. In Section \ref{connectedsumsec} we use the Normal Form Theorem to give a different proof of a theorem by H. Ananthnarayan, E. Celikbas, and Z. Yang concerning connected sums. Finally in Section \ref{questionsec} we list questions and open problems related to the topics discussed in the paper.\par

\subsection{Artinian Gorenstein algebras and symmetric decomposition.}\label{basicssec}
Let ${R= {\kk}\{ x_1,\ldots ,x_r\}}$ be a complete local ring over a field ${\kk}$, with maximum ideal $\maxR$, and denote by $\mathfrak{D}={\kk}_{DP}[X_1,\ldots ,X_r]$ the ring of divided powers in $r$ variables $X_1,\ldots ,X_r$, where $X_i^{\,[k]}$ is the divided $k$-th power, and $X_1^{\,k}=k! X_1^{\,[k]}$. The ring $R$ acts on ${\mathfrak{D}} $ by contraction:
\begin{equation}\label{actioneq}
x_i^{\,k}\circ X_i^{\,[K]}=
\begin{cases} 
X_i^{\,[K-k]}&\text { if } K\ge k, \\
0 &\text { if } K<k.
\end{cases}
\end{equation}
Let ${A=R/I}$ be an Artinian Gorenstein (AG) quotient,  with maximum ideal ${\maxA=(x_1,\ldots ,x_r)}$. (we use a slight difference in typography to distinguish the maximum ideals $\maxA$ of $A$ and $\maxR $ of $R$). We have (\cite{Mac1}, \cite[Lemma 1.1]{I1}):
\begin{lemma}[AG algebras and $\kk$-linear maps of $R$]
\label{Macduallem}
There is a 1-1 isomorphism of sets
\begin{align*}
&\{ \text{AG quotients } A \text{ of } R \text{ having socle degree } j \} \Leftrightarrow \\
&\{ {\kk}\text{-linear homomorphisms } \phi: R\to {\kk}, \text{ with } \phi|_{\maxR^{\,j+1}}=0 \text{ but } \phi|_{\maxR^{\,j}}\neq0\}.
\end{align*}
Here ${A=R/I}$ with ${I=\{h\mid \phi (R\cdot h)=0\}}$.
\end{lemma}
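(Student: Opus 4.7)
\medskip\noindent\textbf{Proof plan.} The plan is to construct maps in both directions and verify they are mutually inverse; the main substance lies in the reverse direction, where a $\kk$-linear functional must be shown to cut out a Gorenstein (not merely Artinian) quotient of $R$.

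For the direction $A\mapsto\phi$, given an AG quotient $A=R/I$ of socle degree $j$, the AG hypothesis makes $\Soc(A)$ one-dimensional, and $\maxA\cdot\maxA^{\,j}=\maxA^{\,j+1}=0$ forces $\maxA^{\,j}\subset\Soc(A)$, hence $\Soc(A)=\maxA^{\,j}$. I would pick any $\kk$-linear functional $\bar\phi\colon A\to\kk$ nonzero on $\maxA^{\,j}$ and take $\phi$ to be its pullback along $R\twoheadrightarrow A$; the vanishing $\phi|_{\maxR^{\,j+1}}=0$ and non-vanishing $\phi|_{\maxR^{\,j}}\neq 0$ are immediate from $\maxR^{\,j+1}\subset I$ and the choice of $\bar\phi$.

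For the direction $\phi\mapsto A$, I set $I=\{h\in R\mid \phi(R\cdot h)=0\}$. This is visibly an ideal and contains $\maxR^{\,j+1}$ since $\phi$ kills it, so $A=R/I$ is Artinian as a quotient of $R/\maxR^{\,j+1}$. To extract the Gorenstein property I introduce the bilinear pairing $\Phi\colon A\times A\to\kk$, $\Phi(\bar g,\bar h)=\phi(gh)$, well-defined on classes precisely by the definition of $I$ and $A$-bilinear by commutativity of $R$. Nondegeneracy is immediate: if $\Phi(\bar g,-)\equiv 0$ then $\phi(gR)=0$, so $g\in I$. Hence $\bar g\mapsto\Phi(\bar g,-)$ gives an $A$-module isomorphism $A\cong\Hom_\kk(A,\kk)$. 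Under this isomorphism $\Soc(A)$ corresponds to $\Soc\bigl(\Hom_\kk(A,\kk)\bigr)=\{\psi:\psi|_{\maxA}=0\}\cong (A/\maxA)^\vee\cong\kk$, which is one-dimensional, so $A$ is Gorenstein; and $\phi|_{\maxR^{\,j}}\neq 0$ forces $\maxR^{\,j}\not\subset I$, pinning the socle degree at exactly $j$.

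Mutual inverseness then follows by direct bookkeeping using nondegeneracy of $\Phi$: recovering $\phi$ from $A$ and then forming the associated ideal reproduces $I$, and vice versa any pullback functional built from the constructed $A$ agrees with the original $\phi$. The step I expect to require the most care is the Gorenstein conclusion in the reverse direction --- specifically, identifying $\Soc\bigl(\Hom_\kk(A,\kk)\bigr)$ with $(A/\maxA)^\vee$ (which uses that the $A$-module structure on the dual is by precomposition) and verifying that $\Phi$ is genuinely $A$-bilinear so that the self-duality of $A$ transports socles correctly. Together these produce the one-dimensional socle that upgrades Artinian to Gorenstein, which is the substantive content of the lemma.
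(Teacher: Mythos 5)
The paper does not prove this lemma --- it states it as a classical fact, citing Macaulay \cite{Mac1} and \cite[Lemma 1.1]{I1} --- so there is no internal argument to compare against, and your proposal must be judged on its own merits. The substantive core of your argument is sound and is the standard one: defining $I=\{h\mid\phi(R\cdot h)=0\}$, checking that the induced pairing $\Phi(\bar g,\bar h)=\phi(gh)$ is well-defined, $A$-bilinear, and nondegenerate on $A=R/I$, and then transporting socles along the resulting self-duality $A\cong\Hom_\kk(A,\kk)$ to conclude $\dim_\kk\Soc(A)=1$. The identification $\Soc\bigl(\Hom_\kk(A,\kk)\bigr)=\{\psi\mid\psi|_{\maxA}=0\}\cong(A/\maxA)^\vee$ is correct given the precomposition module structure, and the socle-degree check ($\phi|_{\maxR^{j+1}}=0\Rightarrow\maxR^{j+1}\subseteq I$, $\phi|_{\maxR^j}\neq 0\Rightarrow\maxR^j\not\subseteq I$) is right. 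The forward direction, using that a Gorenstein Artinian $A$ has $\Soc(A)=\maxA^{\,j}$ one-dimensional, is also fine as a construction.

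The gap is in the final ``mutual inverseness'' paragraph, which asserts more than is true. The map $\phi\mapsto A$ is genuinely many-to-one: replacing $\phi$ by $c\phi$ for $c\in\kk^\times$, or more generally by $\phi\circ m_u$ for any unit $u\in R$, produces the same ideal $I$ because $uR=R$. Correspondingly, your forward construction $A\mapsto\phi$ requires a non-canonical choice of $\bar\phi$, and the claim that ``any pullback functional built from the constructed $A$ agrees with the original $\phi$'' is false --- it agrees only up to this unit/scalar ambiguity. What your argument actually establishes is that $\phi\mapsto A$ is a well-defined surjection from the set of such functionals onto the set of AG quotients of socle degree $j$, with fibers given by the orbits of the differential-unit action (equivalently, by Macaulay duality, by the choice of generator of the cyclic inverse system $\hat A$). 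To get a genuine bijection one must pass to equivalence classes of $\phi$ under $\phi\sim\phi\circ m_u$, a point the lemma statement glosses over and that you should have flagged rather than asserted away; as written, the ``1-1'' claim cannot be verified because it does not hold at the level of the raw sets.
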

\begin{definition}
\label{basicdef}
Recall that the \emph{socle degree} of $A$ is the integer $j$ such that ${\maxA^{\,j}\neq 0}$ but ${\maxA^{\,j+1}=0}$. Consider the associated graded algebra\index{associated graded algebra} ${A^*=\bigoplus_{i\ge0}A_i}$, where ${A_i=\maxA^{\,i}/\maxA^{\,i+1}}$. The \emph{Hilbert function}\index{Hilbert function} of $A$ is the sequence ${H(A)=(h_0,h_1,\ldots )}$, with ${h_i=\dim_{\kk}A_i}$. We denote by $\Gor(R,j)$ the set of Artinian Gorenstein quotients ${A=R/I}$ of $R$ having socle degree\index{socle degree} $j$ and by $\Gor_H(R)$ the parametrized family of those AG quotients of $R$ having Hilbert function $H$, and by $G_H(R)$ the family of graded quotients $A=R/I$, $I$ homogeneous in $R$. Here $H=(h_0,h_1,\cdots,h_j)$ and $\sum h_i=n$. We give $\Gor_H(R)$ and $G_H(R)$ respectively the reduced subscheme structure arising from the inclusions 
\begin{align}
\Gor_H(R)&\subset\Gor(R,j)\subset \Grass(n,R/ {\maxR}^{j+1});\notag\\
G_H(R)&\subset \prod_{1\le i\le j_H} \mathrm{Grass}(h_i, r_i).
\end{align}
Here $r_i=\dim_{\kk} R_i={\binom{r+i-1}{i}}$ and the map $\pi: \Gor_H(R)\to G_H(R): \pi (A)=A^\ast=Gr_{\maxA}A$ is a morphism.
\end{definition}
F.H.S. Macaulay showed  
\index{dual generator of A@dual generator of $A$}%
\index{apolar generator@apolar generator of $\mathcal{A}$}%
\index{apolar generator@apolar generator of $\mathcal{A}$|see {dual generator}}%
\begin{lemma}[Macaulay duality for AG local algebras]{\rm \cite{Mac1}}
\label{maclem1}
There is an isomorphism $\beta$ of sets from $\Gor(R,j)$ to the set of principal inverse systems $\{R\circ f\mid f\in \mathfrak{D}, \deg f=j\}$. Here 
\begin{equation}
\beta (A) =\{Q\in \mathfrak{D}\mid I\circ Q=0\}, \text { and } \beta^{-1}(R\circ f) = R/\Ann f.
\end{equation}
\end{lemma}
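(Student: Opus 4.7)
The plan is to upgrade Lemma~\ref{Macduallem} from $\kk$-linear functionals to elements of $\mathfrak{D}$ via the canonical perfect pairing induced by contraction. Concretely, the pairing
\[
R/\maxR^{\,j+1}\times \mathfrak{D}_{\le j}\to \kk,\qquad (h,f)\mapsto (h\circ f)_0,
\]
is perfect, since the monomial basis $\{x^\alpha\}_{|\alpha|\le j}$ of $R/\maxR^{\,j+1}$ and the divided-power basis $\{X^{[\alpha]}\}_{|\alpha|\le j}$ of $\mathfrak{D}_{\le j}$ are dual under the action~\eqref{actioneq}. Hence every $\kk$-linear map $\phi:R\to\kk$ vanishing on $\maxR^{\,j+1}$ has the unique form $\phi_f(h)=(h\circ f)_0$ for a single $f\in\mathfrak{D}_{\le j}$, and the condition $\phi|_{\maxR^{\,j}}\neq 0$ is precisely $\deg f=j$.

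Given such an $f$, I would compute the ideal produced by Lemma~\ref{Macduallem}: $I=\{h\mid \phi_f(Rh)=0\}=\{h\mid (rh\circ f)_0=0\text{ for all }r\in R\}$. Using $(rh\circ f)_0=(r\circ(h\circ f))_0$, nondegeneracy of the pairing forces this to be equivalent to $h\circ f=0$; therefore $I=\Ann_R f$. The inclusion $R\circ f\subseteq \beta(A)=I^\perp$ is then immediate, since contractions commute. For the reverse inclusion, the perfect pairing above descends to a perfect pairing $A\times \beta(A)\to\kk$ (a standard orthogonal-complement argument, using that $I^{\perp\perp}=I$ inside $R/\maxR^{\,j+1}$), so $\dim_\kk \beta(A)=\dim_\kk A$; since also $R\circ f\cong R/\Ann_R f=A$ as $R$-modules, the inclusion must be equality. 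To invert the construction I would check that, for any degree-$j$ element $f\in\mathfrak{D}$, the ideal $\Ann_R f$ depends only on $R\circ f$, and $R/\Ann_R f$ is Artinian (since $R\circ f\subseteq\mathfrak{D}_{\le j}$ is finite-dimensional), of socle degree $j$ (since $\maxR^{\,j}\circ f\neq 0$ while $\maxR^{\,j+1}\circ f=0$), and Gorenstein, the last because the map $\Soc(A)\to\kk$, $a\mapsto a\circ f$, is injective with image $(R\circ f)_0=\kk$, giving $\dim_\kk \Soc(A)=1$.

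The main obstacle is the perfect-pairing descent that yields $\dim_\kk\beta(A)=\dim_\kk A$: this is the crux of Macaulay duality and is where the Gorenstein hypothesis is conceptually felt, since it is equivalent (via Matlis-type duality) to the inverse system $\beta(A)$ being cyclic rather than only a finite-dimensional $R$-submodule of $\mathfrak{D}$. A secondary subtlety is verifying that $\beta^{-1}$ is well-defined on principal inverse systems — two dual generators $f$ and $f'$ yield the same algebra iff $R\circ f=R\circ f'$, iff they differ by multiplication by a differential unit — so that the description $\beta^{-1}(R\circ f)=R/\Ann_R f$ depends only on the submodule $R\circ f$ and not on the chosen generator.
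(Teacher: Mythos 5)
The paper cites Macaulay \cite{Mac1} for this lemma and gives no proof, so there is no argument in the text to compare against; your proof is a correct and standard deduction. The chain of ideas is sound: the dual-basis perfect pairing on $R/\maxR^{\,j+1}\times\mathfrak{D}_{\le j}$ identifies each $\phi$ of Lemma~\ref{Macduallem} with a unique $f\in\mathfrak{D}_{\le j}$ of top degree $j$; nondegeneracy yields $I=\Ann_R f$; the inclusion $R\circ f\subseteq I^\perp$ follows from commutativity of contraction; and the length count $\dim_\kk I^\perp=\dim_\kk A=\dim_\kk(R\circ f)$ forces equality, with the converse verifications (Artinian of socle degree $j$, one-dimensional socle) all in order.

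One small attribution error in your commentary: you locate the use of the Gorenstein hypothesis in the ``perfect-pairing descent'' giving $\dim_\kk\beta(A)=\dim_\kk A$. That equality is purely the orthogonal-complement dimension formula for a perfect pairing and holds for \emph{every} Artinian quotient $A=R/I$, Gorenstein or not. The Gorenstein hypothesis enters earlier, via Lemma~\ref{Macduallem}: it is what guarantees a single functional $\phi$ (equivalently a single $f$) with $I=\Ann_R f$, i.e.\ it is what makes $I^\perp$ \emph{cyclic} rather than merely finite-dimensional. You do note the cyclicity point parenthetically, so this is a misstatement of emphasis rather than a gap in the proof.
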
\noindent
\index{Macaulay duality}%
\index{Macaulay duality|see {dual generator}}%
We call such an $f$ a \emph{dual generator} or \emph{apolar generator} of the AG algebra $A$. Given ${A=R/I}$ as a quotient of $R$, $f$ is unique only up to multiplication by a differential unit:
${\Ann f=\Ann(u\circ f)}$, where $u$ is any unit of $R$. \par
Given an Artinian Gorenstein quotient $A$ of socle degree $j$, and a ${\kk}$-linear map $ \phi: A\to {\kk}$ that is surjective on the socle $A_j$, F.H.S.~Macaulay showed also that we have an exact pairing,
\begin{equation}\label{exactpair2eq}
\langle \cdot,\cdot\rangle_\phi: A\times A\to {\kk} \text{ by }\langle h,h'\rangle_\phi =\phi (hh').
\end{equation}\label{frakCaeq}\noindent
Evidently, in this pairing we have that the annihilator of $\maxA^{\,i}$ is ${0:\maxA^{\,i}=\{a\in A\mid \maxA^{\,i}\cdot a=0\}}$; although we may regard this as a perpendicular space of $\maxA^{\,i}$ for the pairing, we will reserve the notation $I^\perp\subset \mathfrak{D}$ for the Macaulay inverse system $I^\perp=\{f\in \mathfrak{D}\mid I\circ f=0\}$. 
\par
The first author in \cite{I1} defined a filtration of the associated graded algebra  $A^\ast=\Gr_{{\maxA}}(A)$ by the graded ideals $C_A(0), C_A(1),\ldots$\index{Ca ideal@$C_A(a)$ ideal of $A^\ast$} (we suppress the $A$ when it is understood) 
\begin{equation}\label{Ceq}
A^\ast = C(0)\supset C(1)\supset \cdots \supset C(j-1)=0,
\end{equation}
where 
\begin{equation}\label{Caieqn}
C(a)_i=\varrho \Bigl( \maxA^{\,i}\cap (0:\maxA^{\,j+1-a-i})/\bigl( \maxA^{\,i+1}\cap 
(0:\maxA^{\,j+1-a-i})\bigr)\Bigr)
\end{equation}
where $\varrho$ denotes the projection to $A_i$. The $A^\ast$ module structure comes most naturally from working with the quotients on the right, before applying $\varrho$. Note that since $(0:\maxA^{\,j-a-i})\subseteq(0:\maxA^{\,j+1-a-i})$, we have
\index{subquotient $Q_A(a)$}%
\begin{equation*}
C(a+1)_i=\varrho \left(\frac{\maxA^{\,i}\cap (0:\maxA^{\,j-a-i})}{\maxA^{\,i+1}
\cap (0:\maxA^{\,j-a-i})}\right)
\cong\frac{\maxA^{\,i}\cap (0:\maxA^{\,j-a-i})+
\maxA^{\,i+1}\cap (0:\maxA^{\,j+1-a-i})}{\maxA^{\,i+1}
\cap (0:\maxA^{\,j+1-a-i})},
\end{equation*}
and the quotients $Q _A(a)$ (we shorten to $Q(a)$) satisfy
\begin{equation}\label{Qaeqn}
Q(a)_i=\frac{C(a)_i}{C(a+1)_i}\cong
\frac{\maxA^{\,i}\cap (0:\maxA^{\,j+1-a-i})}
{\maxA^{\,i}\cap (0:\maxA^{\,j-a-i})+
\maxA^{\,i+1}\cap (0:\maxA^{\,j+1-a-i})}.
\end{equation}
The subquotients ${Q(a)=C(a)/C(a+1)}$ are reflexive $A^\ast$ modules. An alternative, and perhaps more natural way of viewing the structure of $Q(a)$ as $A^\ast $ modules is given in \cite[\S 1F]{I1}. The multiplication in $C(a)$ as ideal of $A^\ast$ is not simply defined in $A^\ast$ but uses the quotient in Equation \eqref{Caieqn}: see Example \ref{usevarrho}.
We now state the symmetric decomposition theorem from \cite{I1} that underlies our work.\footnote{We thank Larry Smith, who pointed out to us that W. Gr\"{o}bner's \cite{Gro} has some related material: that article includes decomposing ideals of an Artinian algebra into the intersection of irreducible (Gorenstein) ideals, but does not appear to include a result concerning the symmetric decomposition of the AG algebra of an irreducible algebra.} Recall that $j=j_A$ is the socle degree of the AG algebra $A$. The \emph{Hilbert function}\index{Hilbert function!decomposition} $H(A)$ of an Artinian algebra $A=R/I$ is the sequence\index{symmetric decomposition of Hilbert function}\index{symmetric subquotient $Q_A(a)$}
\begin{equation}\label{HFeqn}
H(A)=(h_0,h_1,\ldots ) \text { where } h_i=\dim_{\kk} \bigl( ( I\cap \maxR^{\,i}+\maxR^{\,i+1})/ \maxR^{\,i+1} \bigr).
\end{equation}
\index{symmetry conditions on $H(a)$!Theorem \ref{symdecompthm}}%
\begin{theorem}[\textsc{Symmetric decomposition}]
\label{symdecompthm} 
{\rm \cite[Theorem 1]{I5}, \cite[Theorem 1.5]{I1}\label{mainoldthm} } 
Let $A$ be an Artinian Gorenstein algebra. The exact pairing $A\times A\to {\kk}$ determines an exact pairing  
\begin{equation}\label{exactpaireq}
\phi_a: \, Q(a)_i\times Q(a)_{j-a-i}\to {\kk}.
\end{equation}
The Hilbert function $H(A)=\sum_a H\bigl(Q(a)\bigr)$, and each $H\bigl(Q(a)\bigr)$ is symmetric with center of symmetry
$(j-a)/2.$  Let $f=f_j+f_{j-1}+\cdots $ be a dual generator for $A$ (Lemma \ref{maclem1}). Then $Q(0)$ is a graded Artinian Gorenstein algebra whose dual generator is $f_j$. Also, $Q(0)$ is a maximum-length graded AG quotient of $A^*$.
\end{theorem}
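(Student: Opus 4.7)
The plan is to exploit Macaulay duality, which makes $\langle h, h'\rangle := \phi(hh')$ (see equation~\eqref{exactpair2eq}) a perfect bilinear pairing on $A$: for any $\kk$-subspace $V \subseteq A$ one has $\dim V + \dim V^\perp = \dim A$, $V^{\perp\perp} = V$, and for an ideal $J$ the perp satisfies $J^\perp = (0:J)$. First I would record the basic perp computations for the subspaces $M(a,i) := \maxA^i \cap (0:\maxA^{j+1-a-i})$: using $(V\cap W)^\perp = V^\perp + W^\perp$ one obtains $M(a,i)^\perp = (0:\maxA^i) + \maxA^{j+1-a-i}$, with analogous expressions for $M(a+1,i)$ and for $\maxA^{i+1}\cap M(a,i)$.

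Next I would define the pairing $\phi_a: Q(a)_i \times Q(a)_{j-a-i} \to \kk$ by $([x],[y]) \mapsto \phi(xy)$ and verify well-definedness. If $x$ is replaced by an element of $M(a+1,i) = \maxA^i \cap (0:\maxA^{j-a-i})$ then $x \cdot \maxA^{j-a-i} = 0$ forces $xy = 0$ for any $y \in \maxA^{j-a-i}$; the remaining ambiguity has $x$ differing by an element of $\maxA^{i+1}\cap M(a,i)$, and this is controlled by combining $x \cdot \maxA^{j+1-a-i} = 0$ with the companion condition $y \in (0:\maxA^{i+a+1})$ built into $M(a,j-a-i)$ to push the relevant product into $\maxA^{j+1} = 0$. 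Non-degeneracy of $\phi_a$ then follows from the general duality identity $(W/V)^\vee \cong V^\perp/W^\perp$ applied to $V = M(a+1,i) + (\maxA^{i+1}\cap M(a,i)) \subseteq W = M(a,i)$; after simplifying the perps using the formulas above, the result is $Q(a)_i^\vee \cong Q(a)_{j-a-i}$. This forces $\dim Q(a)_i = \dim Q(a)_{j-a-i}$, hence symmetry of $H(Q(a))$ about $(j-a)/2$. The summation $H(A) = \sum_a H(Q(a))$ is then a telescope: since $\maxA^{j+1} = 0$ one checks $M(0,i) = \maxA^i$ so $C(0) = A^\ast$, and $C(a) = 0$ for $a$ large, whence $\dim A_i = \sum_a \dim Q(a)_i$.

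For the claims about $Q(0)$, write $f = f_j + f_{j-1} + \cdots$, and let $h \in I = \Ann_R(f)$ have lowest-degree homogeneous part $L(h)$. Examining the top-degree component of $h \circ f = 0$ yields $L(h) \circ f_j = 0$, so the initial ideal $I^\ast$ is contained in $\Ann_R(f_j)$, producing a graded surjection $A^\ast = R/I^\ast \twoheadrightarrow R/\Ann_R(f_j)$. I would identify its kernel with $C(1)$ directly from the definition of $C(1)_i$ as the classes in $A_i$ of elements annihilated by $\maxA^{j-i}$, noting that Macaulay duality rephrases this condition as lying in $\Ann_R(f_j)/I^\ast$; hence $Q(0) \cong R/\Ann_R(f_j)$, graded Gorenstein of socle degree $j$ with dual generator $f_j$. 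For maximality, any graded AG quotient $A^\ast \twoheadrightarrow B$ of socle degree $j' \leq j$ has dual generator $g \in \mathfrak{D}_{j'}$ with $\Ann_R(g) \supseteq I^\ast$; lifting shows that $g$ occurs as a leading form of an element of $\hat A = R\circ f$, and the maximal such lift is $g = f_j$ with $j' = j$, so $B$ factors through $Q(0)$. The main obstacle will be the perp bookkeeping in the second paragraph — ensuring that the simplification of $V^\perp/W^\perp$ reproduces exactly the subquotient $Q(a)_{j-a-i}$ — together with the well-definedness check, since both rely in an essential way on the seemingly asymmetric shift in the indexing $(0:\maxA^{j+1-a-i})$ whose role only becomes transparent after the perp simplification.
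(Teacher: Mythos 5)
The paper does not actually reprove this theorem; it is cited from \cite[Theorem 1]{I5} and \cite[Theorem 1.5]{I1}. That said, the machinery the paper itself introduces in equations \eqref{Weqn}--\eqref{complemWuveq} --- the subspaces $W(u,v)=\maxA^u\cap(0:\maxA^v)$ and their orthogonal complements $K(u,v)=\maxA^v+(0:\maxA^u)$ under $\langle g,h\rangle_\phi=\phi(gh)$ --- is exactly the framework your proof uses, so your strategy (define $\phi_a$, check well-definedness, invoke $(W/V)^\vee\cong V^\perp/W^\perp$, telescope, and identify $Q(0)\cong R/\Ann f_j$ by comparing leading forms) is the standard one and matches the intended argument.

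There is one genuine arithmetic error in your well-definedness check, and it matters. You assert that $M(a,j-a-i)$ encodes the condition $y\in(0:\maxA^{i+a+1})$; in fact $j+1-a-(j-a-i)=i+1$, so $M(a,j-a-i)=\maxA^{j-a-i}\cap(0:\maxA^{i+1})$. With your exponent the mechanism you describe (``push the product into $\maxA^{j+1}=0$'') only succeeds when $a=0$: if $x\in\maxA^{i+1}\cap M(a,i)$ and $y\in\maxA^{j-a-i}$, then $xy\in\maxA^{j-a+1}$, which need not vanish once $a\geq 1$. With the correct exponent the argument becomes both simpler and airtight: $x\in\maxA^{i+1}$ and $y\in(0:\maxA^{i+1})$ force $xy=0$ directly, with no degree count needed. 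For the record, the perp bookkeeping you flag as the main residual obstacle does close up: with $W=M(a,i)$ and $V=M(a+1,i)+\maxA^{i+1}\cap M(a,i)$, one computes $W^\perp=(0:\maxA^i)+\maxA^{j+1-a-i}$ and, using the modular law twice, $V^\perp=(0:\maxA^i)+\bigl(\maxA^{j-a-i}\cap(0:\maxA^{i+1})\bigr)+\maxA^{j+1-a-i}$; the inclusion of $M(a,j-a-i)$ into $V^\perp$ is then surjective onto $V^\perp/W^\perp$ with kernel $M(a+1,j-a-i)+\maxA^{j+1-a-i}\cap M(a,j-a-i)$, recovering exactly $Q(a)_{j-a-i}$. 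One last, smaller caveat: the final maximality claim ``the maximal such lift is $g=f_j$, so $B$ factors through $Q(0)$'' conflates maximal socle degree with maximal length; a graded AG quotient of smaller socle degree $j'<j$ is not a priori a quotient of $Q(0)$, so maximality of $\dim_\kk Q(0)$ needs a separate (e.g.\ Hilbert-function) argument rather than the factorization you sketch.
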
\noindent
\index{Q(a)decomposition@$Q(a)$ decomposition}%
We denote by $n=n(A)$ the length $n=\dim_{\kk}A$.
\begin{definition}\label{Ddef}  
A \emph{Gorenstein sequence}\index{Gorenstein sequence} is one that occurs as the Hilbert function of an Artinian Gorenstein algebra, not necessarily homogeneous. We term the sequence\index{DA@$\mathcal{D}(A)$ Gorenstein decomposition sequence}
\begin{equation}
\mathcal{D}(A)=\bigl(H(0), H(1),\ldots, H(j-2)\bigr),\text{ where }H(u)=H\bigl(Q_A(u)\bigr),
\end{equation}
the \emph{Hilbert function decomposition} of $H(A)$, and term the collection of sequences\par\noindent $\mathcal{D}=\bigl(H(0),H(1),\ldots , H(j-2)\bigr)$ that occur for an Artinian Gorenstein algebra a \emph{Gorenstein decomposition} sequence.  We will sometimes write $H_A(a)$ or $H(a)$ for $H\bigl(Q_A(a)\bigr)$.

\end{definition}
The following is an immediate consequence of Theorem \ref{mainoldthm}.\footnote{We omit stating here the ``shell formula''   \cite[Proposition 1.9]{I1}, which gives the difference between the two sides of Equation \ref{overweighteqn}.}

\index{shell formula}%
\begin{corollary}
\label{symdeccor} 
Let $A$ be an AG algebra of socle degree $j$, and let $a\in \mathbb N$ satisfy $0\le a\le j-2$. Then the sequence $H(A)-\sum_{i=0}^a H_A(i)=\sum_{i=a+1}^{j-2} H_A(i)$ must be either symmetric with center $(j-a-1)/2$, in which case it is just $H_A(a+1)$ (and $H_A(i)=0$ for $i\ge a+2$) or else the sequence is overweighted in degrees less than $(j-a-1)/2$: that is (in the overweighted case)
\begin{equation}\label{overweighteqn}
\sum_{k< (j-a-1)/2}\,\sum_{\, i=a+1}^{j-2} H_A(i)_k> \sum_{k> (j-a-1)/2}\,\sum_{\, i=a+1}^{j-2} H_A(i)_k.
\end{equation}
\end{corollary}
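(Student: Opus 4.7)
The corollary follows directly from Theorem~\ref{mainoldthm}. By that theorem the Hilbert function decomposes as $H(A)=\sum_{i=0}^{j-2}H_A(i)$, where each $H_A(i)$ is a nonnegative sequence supported on degrees $[0,j-i]$ and symmetric about the center $c_i:=(j-i)/2$. Subtracting the first $a+1$ summands leaves $T:=\sum_{i=a+1}^{j-2}H_A(i)$, whose leading summand $H_A(a+1)$ is symmetric about $c':=(j-a-1)/2=c_{a+1}$, while every subsequent summand $H_A(i)$ with $i\ge a+2$ is symmetric about a strictly smaller center $c_i<c'$. The proof thus reduces to examining these two layers of $T$.

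\textbf{The two cases.} If $H_A(i)=0$ for every $i\ge a+2$, then $T=H_A(a+1)$ is itself symmetric about $c'$, giving the first alternative. Otherwise, I would compare $L:=\sum_{k<c'}T_k$ and $R:=\sum_{k>c'}T_k$ summand-by-summand by means of a reflection identity: for a nonnegative sequence $S$ symmetric about a center $c\le c'$, the substitution $k\mapsto 2c-k$ rewrites $\sum_{k<c'}S_k=\sum_{k>2c-c'}S_k$, so
\[
\sum_{k<c'}S_k-\sum_{k>c'}S_k=\sum_{2c-c'<k\le c'}S_k\ge 0.
\]
The $i=a+1$ summand contributes zero (its center is exactly $c'$), while every $i\ge a+2$ summand contributes a nonnegative amount since $2c_i-c'<c'$. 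Adding these contributions yields $L\ge R$, and the precise value $L-R$ is supplied by the \emph{shell formula} of \cite[Proposition~1.9]{I1} cited in the footnote.

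\textbf{Main obstacle.} The subtlest point is strictness in~\eqref{overweighteqn}. The elementary summand-wise bound just sketched only produces $L\ge R$; equality can occur even when $T$ is not symmetric about $c'$, since a nonzero $H_A(i)$ with $i\ge a+2$ may have all of its support placed outside the window $(2c_i-c',c']$, as happens for interior-zero sequences like $H_A(2)=(0,1,0,1,0)$ recalled in Section~\ref{intro1sec}. Extracting the strict inequality in the overweighted case therefore cannot be accomplished by the naive reflection estimate alone: one must invoke the full shell formula, which provides a refined nonnegative expression for $L-R$ that detects the precise failure of symmetry and is strictly positive exactly when $T$ is not symmetric about $c'$.
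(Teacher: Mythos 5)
Your reflection argument correctly yields the non-strict inequality $L\ge R$ and is precisely the ``immediate consequence'' of Theorem~\ref{mainoldthm} that the paper has in mind: each $H_A(i)$ with $i\ge a+1$ is symmetric about $c_i=(j-i)/2\le c'=(j-a-1)/2$, and reflecting each summand about its own center shows $L-R$ is a sum of nonnegative window contributions. You have also correctly located the delicate point: this bookkeeping alone does not force strictness when some $H_A(i)$ with $i\ge a+2$ has interior zeroes, since its whole support may lie outside the reflection window.

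The gap is the final appeal to the shell formula. You assert, without argument, that it produces an expression for $L-R$ that is ``strictly positive exactly when $T$ is not symmetric about $c'$.'' No such appeal can succeed, because the strict inequality in~\eqref{overweighteqn} is in fact \emph{false} as written, and the failure is exhibited by the very interior-zero example you cite. Take $H(A)=(1,3,3,4,2,1,1)$ with $j=6$, $H_A(0)=(1,1,1,1,1,1,1)$, $H_A(1)=(0,1,2,2,1,0)$, $H_A(2)=(0,1,0,1,0)$ (Proposition~\ref{prop1.24}). For $a=0$ one has $T=H_A(1)+H_A(2)=(0,2,2,3,1,0,0)$ and $c'=5/2$, and both sides of~\eqref{overweighteqn} equal $4$; yet $T_1=2\ne 1=T_4$, so $T$ is not symmetric about $c'$, and $H_A(2)\ne 0$. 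The correct statement replaces $>$ by $\ge$; the shell formula merely records the exact (possibly zero) value of $L-R$, it does not certify positivity. With $\ge$ in place your reflection argument is a complete proof of the inequality, and the accompanying assertion (symmetry of $T$ about $c'$ forces $H_A(i)=0$ for all $i\ge a+2$) is most cleanly obtained not from the window estimate but from a first-moment computation: each $H_A(i)$ has center of mass $c_i$, so symmetry of $T$ about $c'$ forces $\sum_{i\ge a+1}(c'-c_i)\sum_k H_A(i)_k=0$, a sum of nonnegative terms, whence $H_A(i)=0$ whenever $c_i<c'$.
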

We recall the following result, due to Junzo Watanabe \cite{W} that is also a consequence of the symmetric decomposition Theorem \ref{mainoldthm} (\cite[Proposition 1.7]{I1}): it follows from 
Corollary~\ref{symdeccor} applied to ${a=0}$. 

\index{associated graded algebra!when $H(A)$ is symmetric}%
\begin{lemma}
\label{Watanabelem} 
Assume that the Gorenstein sequence ${H=H(A)}$ is itself symmetric, so ${h_i=h_{j-i}}$ for ${0\le i\le j/2}$.
Then the associated graded algebra $A^*$ is itself Artinian Gorenstein, $H_A(0)=H(A)$ and $H_A(i)=0 $ for $i\ge 1$. 
\end{lemma}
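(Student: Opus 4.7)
The plan is to apply Corollary \ref{symdeccor} with $a=0$ to the difference $S := H(A) - H_A(0) = \sum_{i\ge 1} H_A(i)$. By Theorem \ref{mainoldthm}, $H_A(0)$ is symmetric about $j/2$ and termwise at most $H(A)$, so when $H(A)$ is itself symmetric about $j/2$, the difference $S$ is also a non-negative sequence symmetric about $j/2$. Corollary \ref{symdeccor} then offers only two alternatives: either $S$ is symmetric about $(j-1)/2$ with $S = H_A(1)$ (and $H_A(i)=0$ for $i\ge 2$), or $S$ is strictly overweighted in degrees less than $(j-1)/2$.

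I would first rule out the overweighted alternative. Since $S$ is symmetric about $j/2$ and $(j-1)/2 < j/2$, pairing $S_k$ with $S_{j-k}$ shows that the sum over degrees $> (j-1)/2$ equals the sum over degrees $< (j-1)/2$ plus the "middle" term $S_{j/2}$ (when $j$ is even) or $S_{(j+1)/2}$ (when $j$ is odd), each of which is non-negative. Therefore
\[
\sum_{k < (j-1)/2} S_k \;\le\; \sum_{k > (j-1)/2} S_k,
\]
which directly contradicts the strict overweighting inequality \eqref{overweighteqn}. A short case split on the parity of $j$ justifies this calculation.

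Consequently we must be in the first alternative, so $S = H_A(1)$ is simultaneously symmetric about $j/2$ (inherited from $H(A)$ and $H_A(0)$) and about $(j-1)/2$ (by Theorem \ref{mainoldthm}). Chaining the two symmetries $S_i = S_{j-i}$ and $S_i = S_{j-1-i}$ yields $S_m = S_{m-1}$ for every $m \in \{1,\dots,j\}$, so $S$ is constant on $[0,j]$. Since $H_A(1)_j = 0$ (as $H_A(1)$ is supported in $[0,j-1]$ by its symmetry about $(j-1)/2$), the constant must be zero, so $S \equiv 0$ and hence $H_A(i) = 0$ for every $i \ge 1$.

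Finally, from $H\bigl(C(i)/C(i+1)\bigr) = H_A(i) = 0$ for $i \ge 1$ together with $C(j-1) = 0$, induction downward yields $C(i) = 0$ for all $i \ge 1$, so $A^\ast = C(0) = Q(0)$. Theorem \ref{mainoldthm} then identifies $A^\ast$ as the graded Artinian Gorenstein algebra $R/\Ann f_j$ with $H_A(0) = H(A)$, completing the proof. The only subtle step is the parity case analysis ruling out overweighting; everything else is bookkeeping on the decomposition.
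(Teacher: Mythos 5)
Your proof is correct and carries out the route the paper merely indicates — applying Corollary~\ref{symdeccor} with $a=0$ — supplying the case analysis the paper leaves implicit, and the final bookkeeping $A^\ast=C(0)=Q(0)$ and $H_A(0)=H(A)$ is handled properly. A small streamlining: the parity split is unnecessary, since symmetry of $S$ about $j/2$ means $k\mapsto j-k$ sends each degree $k<(j-1)/2$ to a degree $j-k>(j+1)/2>(j-1)/2$, giving $\sum_{k<(j-1)/2}S_k\le\sum_{k>(j-1)/2}S_k$ at once and hence contradicting the strict inequality in~\eqref{overweighteqn}.
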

For example, $H_1=(1,2,3,2,1)$  and $H_2=(1,2,3,3,2,2,2,1,1)$ are both Gorenstein sequences, here $H_1$ occurs for a graded AG algebra $A=R/\Ann \bigl(X^{[4]}+Y^{[4]}+(X+Y)^{[4]}\bigr)$. But $H_2=H(B)$, $B=R/\Ann \bigl(X^{[8]}+Y^{[7]}+(X+Y)^{[4]}\bigr)$, and $B^\ast $ cannot be Gorenstein.\footnote{Many authors have used ``Gorenstein sequence'' $H(A)$ for just the case $A$ is homogeneous, which by Lemma~\ref{Watanabelem} is the case that $H(A)$ itself is symmetric about $j/2$.} \par
\index{symmetric decomposition of Hilbert function!and ``magic square''}%
\index{Hilbert function!of $(0:\mathfrak{m}^b)$}%
\begin{example}[Symmetric decomposition of $H$ and ``magic square'']
\label{symdecompex}
Consider the dual generator $F=X^{[5]}+XY^{[2]}Z+W^{[2]}$, in the divided power ring $\kk_{DP}[X,Y,Z,W]$ and the corresponding AG algebra $A=R/\Ann F$, $R=\kk\{x,y,z,w\}$. The Hilbert function $H(A)=(1,4,5,3,1,1)$ and we write the symmetric decomposition $\mathcal{D}$ of $H$ in a standard form, one row for each sequence $H(a)=H\bigl(Q(a)\bigr)$, in Table~\ref{decomptable}. Note that the rising diagonals can be summed from the right, and give back the Hilbert function of $A$. This is because for an AG algebra $A$ the Loewy $(0:\maxA^{\,b})$ filtration is dual to the $\mathfrak{m}$-adic filtration. Thus ${H(0:\maxA^{\,3})=(0,0,0,\begin{array}{ccc}\cellcolor{yellow}5,&\cellcolor{med-pink}4,&\cellcolor{gray1}1\end{array})}$, reverses $H(R/\maxA^{\,3})=(1,4,5)$. Here $I=\Ann F$ satisfies $I=(xw,\,yw,\,zw,\,z^2,\,w^2-xy^2z,\,x^2y,\,x^2z,\,y^2z-x^4)$ and $I^\ast=(xw,\,yw,\,zw,\,z^2,\,w^2,\,x^2y,\,x^2z,\,y^2z,\,x^6)$.  The dual modules to the $Q(a)$ decomposition (Definition~\ref{Wuvdef}) satisfy $Q_A^\vee(3)_{\mathfrak{D}}=\langle W\rangle$ and $Q_A^\vee(0)_{\mathfrak{D}}=\langle 1;X;X^{[2]};X^{[3]};X^{[4]};X^{[5]}\rangle$, $Q_A^\vee(1)_{\mathfrak{D}}=\langle Y,\,Z;\,XY,\,XZ,\,YZ,\,Y^{[2]};\,XYZ,\,XY^{[2]}\rangle$.\par
Here each $Q^\vee (a)$ arises from the degree $j-a$ term of $F$. This does not always happen.\par
There is a second decomposition $\mathcal{D}_2$ possible for $H=(1,4,5,3,1,1)$, namely 
\begin{equation*}
\mathcal{D}_2=\bigl(H(0)=(1,1,1,1,1,1),\, H(1)=(0,2,3,2,0),\, H(2)=(0,1,1,0)\bigr),
\end{equation*}
that occurs for $B=R/J$, $J=\Ann G$, $G=X^{[5]}+Y^{[2]}Z^{[2]}+W^{[3]}$, $J=(xy,\,xz,\,xw,\,yw,\,zw,\,y^3,$  $z^3,\, w^3-y^2z^2,\,y^2z^2-x^5)$, and $J^\ast=(xy,\,xz,\,xw,\,yw,\,zw,\,y^3,\,z^3,\,w^3,\,y^2z^2,\,x^6)$. Also $Q_B^\vee (0)_{\mathfrak{D}}=\langle R\circ X^{[5]}\rangle$, $ Q_B^\vee(1)_{\mathfrak{D}}=\langle Y,\,Z;\,Y^{[2]},\,YZ,\,Z^{[2]};\,Y^{[2]}Z,\,YZ^{[2]}\rangle$, $Q_B(2)_{\mathfrak{D}}=\langle W;\,W^{[2]}\rangle$. Computations for this example and throughout the paper were either made or confirmed with the help of the software system Macaulay2 \cite{GS}.
\begin{table}
\qquad \qquad\qquad \qquad\qquad\qquad$ \begin{array}{c|ccccccc}
H(A^\vee)&&\cellcolor{gray2}1&\cellcolor{light-gray}1&\cellcolor{aqua}{3}&\cellcolor{yellow}{5}&\cellcolor{med-pink}{4}&\cellcolor{gray1}1\\
\hline\hline
H(0)&\cellcolor{gray2}1&\cellcolor{light-gray}1&\cellcolor{aqua}{1}&\cellcolor{yellow}{1}&\cellcolor{med-pink}{1}&\cellcolor{gray1}1&\\
H(1)&\cellcolor{light-gray}0&\cellcolor{aqua}{2}&\cellcolor{yellow}{4}&\cellcolor{med-pink}{2}&\cellcolor{gray1}0&&\\
H(2)&\cellcolor{aqua}0&\cellcolor{yellow}0&\cellcolor{med-pink}0&\cellcolor{gray1}0&&\\
H(3)&\cellcolor{yellow}0&\cellcolor{med-pink}{1}&\cellcolor{gray1}0&\\
\hline\hline
H(A)&1&4&5&3&1&1&\\
\end{array}$
\caption{A decomposition $\mathcal{D}$ for $H(A)=(1,4,5,3,1,1)$ (Example \ref{symdecompex}).}\label{decomptable}
\end{table}
\end{example}

\begin{definition}[Notation]\label{ordernotation}
\begin{enumerate}[(a)]
\item Given a completed local ring ${R=\kk\{x_1,\ldots,x_r\}}$ we denote by ${r_i=\dim_{\kk} R_i={\binom{r+i-1}{i}}}$; and, given an integer ${j\ge1}$, we denote by ${{c_{i,j}:=\min\left\{ r_i, r_{j-i}\right\}}}$; here $c_{ij}$ are the values of the Hilbert function of a compressed AG algebra ${A=R/I}$ with socle degree $j$.
\item Given an AG algebra $A$, with socle degree $j_A$, we denote by $H(A)$ or $H_A$ its Hilbert function, and by ${\mathcal{D}(A)=\mathcal{D}_A=\bigl(H_A(0),\ldots,H_A(j_A-2)\bigr)}$ its symmetric decomposition, where ${H_A(a)=H\bigl(Q_A(a)\bigr)}$ is the Hilbert function of the module $Q_A(a)$. When $R$ is unsderstood, we may denote ${H_f=H(R/\Ann f)}$, for ${f\in\mathfrak{D}}$, and use the analogous notations $\mathcal{D}_f$ and $Q_f(a)$.
\item The \emph{leading term}\index{leading term} $\lt(f)$ of $f\in \mathfrak{D}=\kk_{DP}[X_1,\ldots, X_r]$ (the divided power ring) is its highest degree term; recall that the \emph{initial term}\index{initial term}  $\mathrm{in}(\varphi)$ of $\varphi\in R$ is its lowest degree term.
\item  The \emph{order}\index{order} $\ord(\varphi)$ of $\varphi\in R$ is the degree of $\mathrm{in}(\varphi)$ so  $\varphi\in\maxR^{\,\ord \varphi}$ but $\varphi\notin\maxR^{\,\ord{\varphi}+1}$.  
\item
Given a dual generator $f\in \mathfrak{D}$, by the \emph{order} $\mathfrak{o}(g)$ of $g\in R\circ f\subset \mathfrak{D}$ we mean the highest power of the maximum ideal ${\maxR}$ of $R$ such that ${g\in \maxR^{\,\mathfrak{o}(g)}\circ f}$, but is not in ${\maxR^{\,\mathfrak{o}(g)+1}\circ f}$.
\item
For an ideal $I$ of $R$, we denote by ${I_i=\langle \mathrm{in} (\varphi) \mid \varphi \in I\cap {\maxR}^i\setminus I\cap {\maxR}^{i+1}\rangle}$, the degree $i$ component of $\Gr_{{\maxR}}(I)$.
\item For an ideal $I$ of $R$ we denote by $I^\perp=\{f\in \mathfrak{D}\mid I\circ f=0\}$. For an element $F\in \mathfrak{D}$ we denote by $\Ann F=\{h\in R\mid h\circ F=0\}$.
\item For a vector subspace ${B\subseteq R}$, an element ${f\in \mathfrak{D}}$, and  a vector subspace ${V\subseteq \mathfrak{D}}$ we denote by\footnote{Some of these notations are different from what we can find in other papers. For instance in \cite{BJMR}, $(R\circ f)_{\le t}$ is denoted as $\df(f)_t$ and $(\maxA^{\,s}\circ f)_{\le t}$ is $\df(f)_t^{j-s-t}$; while in \cite{CN} $(R\circ f)_{t}^\ast$ and $(R\circ V)_{t}^\ast$ are denoted by ${\mathrm{tdf}(f)_t}$ and ${\mathrm{tdf}(V)_t}$, respectively.}  
\begin{equation}\label{Diffeq}
\begin{split}
R\circ f &\text{ the vector space of partials of }f;\\
(R\circ f)_{\le t} &\text{ the partials of }f\text{ of degree at most }t;\\
\maxR^{\,s}\circ f &\text{ the partials of }f\text{ of order at least }s;\\
B\circ V &= \{\varphi\circ g \mid \varphi \in B, g\in V\};\\
(\maxR^{\,s}\circ f)_{\le t}&=(R\circ f)_{\le t}\cap\maxR^{\,s}\circ f;\\
(R\circ f)_{t}^\ast &=\langle \lt (g)\mid g\in (R\circ f)_{\le t}\rangle\cap \mathfrak{D}_t;\\
(R\circ f)^\ast &={\textstyle\bigoplus_{t\ge0} (R\circ f)_{t}^\ast =
\langle \lt(g)\mid g\in (R\circ f)\rangle};\\
(R\circ V)_{t}^\ast &=\langle (R\circ g)_{t}^\ast, g\in V \rangle.
\end{split}
\end{equation}
\end{enumerate}
\end{definition}
\subsection{The $a$-modifications of an AG ideal.}\label{amodsec}
We have defined the ideal $C_A(a)\subset A^\ast$ for $A$ an AG quotient $A=R/I$ (Equations \eqref{Ceq} and \eqref{Caieqn}). We let $\mathcal{C}_A(a)$ denote the pull back of $C_A(a)$ to $R$. 
We call attention to the identification of $\mathcal{C}_A(a)$ of $R$ in Lemma~\ref{modificationlem}(c) below, as the union of associated graded ideals of $a$-modifications of $A$: this can be a tool in constructing Gorenstein algebras, or in showing that certain symmetric decompositions of the Hilbert function $H(A)$ are impossible. We will term an ideal of $R$ an ``AG ideal''\index{AG ideal} if it defines an Artinian Gorenstein (AG) quotient $A=R/I$ and we will denote by $\mathfrak{D}=R^\vee$ the divided power algebra corresponding
to $R$ (dual algebra to $R$). We write $f=f_j+f_{j-1}+\cdots +f_0$ where $f_i$ is a homogeneous form in $\mathfrak{D}_i$.  Here $f_0$ is irrelevant, and if the embedding dimension is $r$ we may assume $f_1=0$.\par 
Two natural questions that arise in using a dual generator $f\in \mathfrak{D}$ to define the AG algebra $A=R/\Ann f$ are
\vskip 0.2cm\noindent
{\bf Question} A. What does a partial sum $f_{\ge j-a}=f_j+f_{j-1}+\cdots +f_{j-a}$ and the AG algebra $B=R/\Ann ( f_{\ge j-a})$ determine about $A=R/\Ann f$? \vskip 0.15cm\noindent
{\bf Question} B.  What symmetric decomposition results when we fix  $f=f_j+f_{j-1}+\cdots +f_{j+1-a}$ then choose $f_{j-a}$ a generic form of degree $j-a$, and consider $B=R/\Ann f'$, $f'=f+f_{j-a}$?  \vskip 0.2cm
We next introduce the concept of $a$-modification which we use to answer Question A in Lemma \ref{modificationlem} and Corollary \ref{partialdecompcor}.  Then we introduce the concept of \emph{relatively compressed $a$-modification} ($a$-RCM) and state Proposition \ref{maxprop}, that answers Question~B. We will be considering proper ideals $I,J\subset\maxR$.
\index{a-modification@$a$-modification}
\begin{definition}\label{amoddef}
We say that the Artinian Gorenstein (AG) ideal $J$ in $R$ is an $a$-\emph{modification} of the AG ideal $I$ (here $j$ is the socle degree of ${A=R/I}$)  if \footnote{This definition is consistent with p.\ 31 of \cite{I1}, but the usage there is primarily of ``relatively compressed $a$-modification'' (here $a$-RCM, see Definition \ref{compressdef} below).}
\begin{equation}\label{amoddefeq} 
I\cap \maxR^{\,j+1-a}=J\cap \maxR^{\,j+1-a}.
\end{equation}
For $f$ a dual generator of $J$ and $g$ a dual generator of $I$ above, we term $f$ an $a$-modification of $g$.
\end{definition}
Evidently this is a symmetric relation between $f$, $g$. Also, if $f$ is an $a$-modification of $g$ for $a\ge 1$ then $f$ is an $(a-1)$-modification of $g$. Any $f$, $g$ of degree $j$ satisfy $f$ is a $0$-modification of $g$; being a $j$-modification is equivalent to $f=u\circ g$ for some unit $u\in R$. \par
Recall that with $A$ a given AG algebra we denote by $\mathcal{C}(a)\subset R$ the graded ideal $\mathcal{C}(a)=(\pi^\ast)^{-1}\bigl(C(a)\bigr)$, where $\pi^\ast: R\to A^\ast$ is the natural projection:  that is, $\mathcal{C}(a)$ is the pull back to $R$ of the ideal $C(a)\subset A^\ast$. \par
For non-zero $h\in R$ we will denote by $\mathrm{in}(h)$ the lowest degree term. For $J$ an ideal of $R$ defining the quotient $A=R/J$ we denote by $J^\ast$ the graded ideal $J^\ast=\sum_{i=0}^\infty  J_i$, where $J_i=(\maxR^{\,i}\cap J+\maxR^{\,i+1})/\maxR^{\,i+1}$;  so $A^\ast\cong  R/J^\ast$. Part (b.ii) below is a consequence of (b.i) not explicitly stated in \cite{I1}. \par
\index{ideal C(a) of R@ideal $\mathcal{C}(a)$ of $R$}%
\index{a-modification@$a$-modification}%
For an ideal $I$ or $R$ defining an Arinian quotient, the Macaulay inverse system $I^\perp\subset \mathfrak{D}=\{f\in \mathfrak{D}\mid I\circ f=0\}$.\par

\index{ideal C(a) of R@ideal $\mathcal{C}(a)$ of $R$}%
\begin{lemma}{\rm \cite[Lemma 1.10, Theorem 3.6]{I1}}
\label{modificationlem} 
Let $f,g\in \mathfrak{D}$ and suppose ${\deg f=\deg g= j}$. Let $A=R/\Ann f$, $A'=R/\Ann g$.
\begin{enumerate}[(a)]
\item\label{modlemi} i. $I=\Ann f$ is an $a$-modification of $J$ if and only if we can write $J=\Ann (f+w)$ for some $ w\in \mathfrak{D}_{\le j-a}.$ \par
ii. Equivalently, ${I=\Ann f}$ is an $a$-modification of ${J=\Ann g}$ if and only if there is a unit $u$ in $R$ such that ${f= u\circ  g\mod  \mathfrak{D}_{\le j-a}}$.  
\item\label{modlemii} Let $f$ be an $a$-modification of $g$. Then we have \par
i. $\mathcal{C}_A(k)=\mathcal{C}_{A'}(k)$ for $k\le a$, and 
\par
ii. $Q_A(k)\cong Q_{A'}(k)$ as $R/{\mathcal{C}}_A(k+1)$ module for $k\le a-1$.
\item \label{modlemiii} The ideal $\mathcal{C}_A(a)$ of $R$ is the union of the associated graded ideals of all AG $a$-modifications of ${I=\Ann f}$. 
\end{enumerate}
\end{lemma}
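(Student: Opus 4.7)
The plan is to translate the defining condition of an $a$-modification, which is phrased at the level of ideals of $R$, into a statement about dual generators via Macaulay's contraction duality, and then to analyse the graded ideals $\mathcal{C}(k)$ in terms of those dual generators. For part (a), I would use that under the contraction pairing $R\times \mathfrak{D}\to \mathfrak{D}$ the perpendicular of $\maxR^{\,j+1-a}$ is precisely $\mathfrak{D}_{\le j-a}$, while the perpendicular of the AG ideal $I=\Ann f$ is the inverse system $R\circ f$. Taking perpendiculars converts $I\cap \maxR^{\,j+1-a}=J\cap \maxR^{\,j+1-a}$ into $R\circ f+\mathfrak{D}_{\le j-a}=R\circ g+\mathfrak{D}_{\le j-a}$. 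Writing $f=u\circ g+w$ with $u\in R$ and $w\in \mathfrak{D}_{\le j-a}$, and comparing leading degrees (both $f$ and $g$ have degree $j$ while $\deg w\le j-a<j$ for $a\ge 1$), forces the constant term of $u$ to be nonzero, so $u$ is a unit. This gives (a)(ii); then (a)(i) follows by setting $w'=-w\in \mathfrak{D}_{\le j-a}$ and noting $\Ann(f+w')=\Ann(u\circ g)=\Ann g=J$, while the converse direction of (i) is immediate from the fact that $\maxR^{\,j+1-a}$ annihilates $\mathfrak{D}_{\le j-a}$.

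For (b)(i), I would unpack the definition: the degree-$i$ part of $\mathcal{C}_A(k)$ is the span of the initial forms $\mathrm{in}(\varphi)$ as $\varphi$ runs over $\maxR^{\,i}$ with $\maxR^{\,j+1-k-i}\varphi\subseteq \Ann f$, equivalently with $\deg(\varphi\circ f)\le j-k-i$. Substituting $f=u\circ g+w$, the identity $\varphi\circ f=(u\varphi)\circ g+\varphi\circ w$ and the bound $\deg(\varphi\circ w)\le j-a-i\le j-k-i$ (valid since $k\le a$) show that the degree bound on $\varphi\circ f$ is equivalent to the same bound on $(u\varphi)\circ g$. Because $u$ is a unit, $\varphi\mapsto u\varphi$ is a bijection of $\maxR^{\,i}$ preserving initial forms up to the nonzero scalar $u_0$, so the two spans coincide and $\mathcal{C}_A(k)_i=\mathcal{C}_{A'}(k)_i$ in every degree. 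Part (b)(ii) is then immediate: for $k\le a-1$ both $\mathcal{C}_A(k)=\mathcal{C}_{A'}(k)$ and $\mathcal{C}_A(k+1)=\mathcal{C}_{A'}(k+1)$ by (b)(i), and the $R$-module structures on the quotient are read off in the common ring $R/\mathcal{C}_A(k+1)$.

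For (c), one inclusion is inherited: for any AG $a$-modification $I'$ of $I$, the graded ideal $(I')^\ast$ is the kernel of $R\to (A')^\ast$ and so sits inside $\mathcal{C}_{A'}(a)$, which equals $\mathcal{C}_A(a)$ by (b)(i). For the reverse, given a nonzero homogeneous $\psi\in \mathcal{C}_A(a)_i$, I pick $\varphi\in \maxR^{\,i}$ with $\mathrm{in}(\varphi)=\psi$ and $\deg(\varphi\circ f)\le j-a-i$, and try to find $w\in \mathfrak{D}_{\le j-a}$ solving $\varphi\circ w=-\varphi\circ f$, so that $\varphi\in \Ann(f+w)$ and hence $\psi\in (\Ann(f+w))^\ast$. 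Such a $w$ exists by a descending induction on degree: the top-degree piece $w_{j-a}$ is chosen using that the map $w_k\mapsto \mathrm{in}(\varphi)\circ w_k$, $\mathfrak{D}_k\to \mathfrak{D}_{k-i}$, is surjective (dual to the injection given by multiplication by $\mathrm{in}(\varphi)\ne 0$ in the domain $R$), and the lower-degree pieces are adjusted to absorb the error terms coming from the higher-order components of $\varphi$. The main point of friction I foresee is interpreting the set-theoretic ``union'' for non-homogeneous elements; to address this I would either invoke the genericity part of Proposition \ref{maxprop} to conclude that for generic $w\in \mathfrak{D}_{\le j-a}$ the ideal $(\Ann(f+w))^\ast$ already equals $\mathcal{C}_A(a)$, making the family of ideals effectively directed, or interpret the union in the graded sense, in which case the homogeneous argument above finishes the proof.
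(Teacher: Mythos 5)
Your proof matches the paper's argument step for step: part (a) comes from the same perpendicular identity $(I\cap\maxR^{\,j+1-a})^\perp=R\circ f+\mathfrak{D}_{\le j-a}$, part (b) from the same substitution $f=u\circ g+c$ together with the degree bound on $\varphi\circ c$ (your explicit remark that $h\mapsto hu$ is a bijection of $\maxR^{\,i}\setminus\maxR^{\,i+1}$ preserving initial forms up to a nonzero scalar fills in a step the memoir leaves implicit), and part (c) from the same degree-by-degree bootstrap construction of a $w\in\mathfrak{D}_{\le j-a}$ annihilating $\varphi\circ f$. Your concern at the end about the set-theoretic ``union'' is legitimate but not a gap in your argument: the paper too only shows that each \emph{homogeneous} element of $\mathcal{C}_A(a)$ lies in some $(\Ann(f+w))^\ast$, so the intended reading of ``union'' is the graded (ideal-sum) sense, which your homogeneous argument already delivers.
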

\begin{proof} 
The proof is a bit scattered in \cite{I1} so we give some indications. \par\noindent
Proof of (a.ii). Suppose that ${I\cap \maxR^{\,j+1-a}=J\cap \maxR^{\,j+1-a}}$ with $J=\Ann g$. Then (see \cite[Eqn.~3.13]{I1})
\begin{equation}\label{amod2eq} 
R\circ f+\mathfrak{D}_{\le j-a}=I^\perp+(\maxR^{\,j+1-a})^\perp = (I\cap \maxR^{\,j+1-a})^\perp =(J\cap \maxR^{\,j+1-a})^\perp =
R\circ g+\mathfrak{D}_{\le j-a}.
\end{equation}
This implies there is a unit $u\in R$ such that $f=u\circ g$ mod $\mathfrak{D}_{\le j-a}$. The converse is evident.\par\noindent
 Proof of (a.i).  Taking $u=1$ above, we have $f+w$, $w\in \mathfrak{D}_{\le j-a}$, determines an $a$-modification of $A=R/\Ann f$. Conversely, if $J=\Ann g$ where $f=u\circ g +w'$, $w' \in  \mathfrak{D}_{\le j-a}$, we have also $J=\Ann(u\circ g) =\Ann (f+w)$ where $w=-u\circ w' \in \mathfrak{D}_{\le j-a}$.\par\noindent
Proof of (b). Suppose now ${f=u\circ g \mod \mathfrak{D}_{\le j-a}}$. Let $f=u\circ g+c$, with 
$c\in\mathfrak{D}_{\le j-a}$. We denote by $A-B$ (or $A\backslash B$) the difference set.
Then (after \cite[Eqn. 1.3]{I1})
\begin{equation}
\begin{aligned}
\mathcal{C}_A(a)_i&=\{ \mathrm{in} (h)\mid h\in \maxR^{\,i}-\maxR^{\,i+1} 
\text{ and } \maxR^{\,j+1-a-i}h\circ f=0\} \\
&=\{ \mathrm{in} (h)\mid h\in \maxR^{\,i}-\maxR^{\,i+1} 
\text{ and }h\circ (u\circ g+c)\in \mathfrak{D}_{\le j-a-i}\} \\
&=\{ \mathrm{in} (h)\mid h\in \maxR^{\,i}-\maxR^{\,i+1} 
\text{ and } h\circ g\in \mathfrak{D}_{\le j-a-i}\} \\
&=\mathcal{C}_{A'}(a)_i.
\end{aligned}
\end{equation}
\index{ideal C(a) of R@ideal $\mathcal{C}(a)$ of $R$}%
This holds also for each $k\le a$, since $f$ is an $a$-modification implies $f$ is an $a-1$ modification. Taking quotients, and noting that $R/\mathcal{C}_A(a)\cong A^\ast/C_A(a)$, we have that the quotient $Q_A(a-1)=C_A(a-1)/C_A(a)\cong \mathcal{C}_{A'}(a-1)/\mathcal{C}_{A'}(a)$, and we have
\begin{equation}
Q_A(a-1)\cong\mathcal{C}_A(a-1)/\mathcal{C}_A(a)=\mathcal{C}_{A'}(a-1)/\mathcal{C}_{A'}(a)\cong Q_{A'}(a-1).
\end{equation}
This implies the second part of \eqref{modlemii}.\vskip 0.2cm\par\noindent
Proof of \eqref{modlemiii}. Part \eqref{modlemii} implies that when $g$ is an $a$-modification of $f$, then $\mathcal{C}_A(a)=\mathcal{C}_{A'}(a)\supset  J^\ast$, so $\mathcal{C}_A(a)$ contains the union of associated graded ideals of all $a$-modifications of $f$. Now
suppose  that $h_t\in\mathcal{C}_A(a)$ is the lowest degree term of  an element
\begin{equation*}
h\in (I: \maxR^{\,j+1-a-t})\cap \maxR^{\,t}.
\end{equation*}
We use the fact ${h_t\circ \mathfrak{D}_k\to \mathfrak{D}_{k-t}}$ is surjective, to bootstrap and find ${g\equiv f \mod \mathfrak{D}_{\le j-a}}$ such that $h\circ g=0$. Then $g$ is an $a$-modification of $f$ and $h_t\in (\Ann g)^\ast$. First, taking $g(0)=f$, since $h\circ f \in \mathfrak{D}_{\le j-a-t} $ there is an element $w_{j-a}\in \mathfrak{D}_{j-a}$ so ${h_t\circ w_{j-a} \equiv h\circ f\mod \mathfrak{D}_{\le j-a-t-1}}$; we set ${g(1)=f-w_{j-a}}$ and note that ${h\circ g(1)\in \mathfrak{D}_{\le j-a-t-1}}$. If we have chosen $g(u)$ so $h\circ g(u)\in \mathfrak{D}_{\le j-a-t-u}$, we choose $w_{j-a-u}\in \mathfrak{D}_{j-a-u}$ such that $h_t\circ w_{j-a-u}\equiv h\circ g(u)\mod \mathfrak{D}_{\le j-a-t-(u+1)}$; we set $g(u+1)=g(u)-w_{j-a-u}$ and note that ${h\circ g(u)\in \mathfrak{D}_{\le j-a-t-(u+1)}}$.   Then $g=g(j-a-t+1)$ satisfies  $h\circ g=0$ and $g\equiv f\mod \mathfrak{D}_{\le j-a}$.
This completes the proof.
\end{proof}\par
Our next example shows that the condition of Lemma \ref{modificationlem}\eqref{modlemii} does not, conversely, imply that $f$ is an $a$-modification of $g$.
\begin{example}[Non-modification]
\label{non-modex} 
Let $f = X^{[4]} + Y^{[2]}$ defining the AG algebra $A=R/\Ann f=R/(xy,\,y^2-x^4)$ and $g= X^{[4]} + X^{[2]}Y$ defining $B=R/(xy-x^3,\,y^2)$; here $\mathcal{D}(A)=\mathcal{D}(B)=\bigl(H(0)=(1,1,1,1,1),\, H(1)=(0,1,0)\bigr)$. Also $A^\ast=B^\ast=R/(xy,\,y^2,\,x^5)$ and the subspace $C(1)=\langle y\rangle\subset A^\ast$ is the same for both algebras. But $(\Ann f)\cap \mathfrak{m}^3=(\langle x,y\rangle \cdot xy,\, y^3-yx^4,\,x^5)$ which is not equal to $(\Ann g)\cap \mathfrak{m}^3=(\langle x,y\rangle \cdot y^2,\,x^2y-x^4,\,x^5)$, so neither algebra $A$, $B$ is a $2$-modification of the other.
\end{example}
\begin{example}[Modification]
\label{modificationlemexample}
Let $R=\kk[x,y,z]$, $f=X^{[6]}+X^{[3]}Y^{[2]}+Z^{[4]}\in\mathfrak{D}=\kk_{DP}[X,Y,Z]$, and
$A=R/\Ann f$ of Hilbert function $H(A)=(1,3,4,3,2,1,1)$ and decomposition
\begin{equation*}
\mathcal{D}(A)=\bigl(H(0)=(1,1,1,1,1,1,1),\, H(1)=(0,1,1,1,1,0),\,
H(2)=(0,1,2,1,0)\bigr).
\end{equation*}
Here $\Ann f=(xz,\, yz,\,xy^2-x^4,\, y^3,\,z^4-x^6)$ and
$A^\ast=R/(xz,\,yz,\,xy^2,\,y^3,\,yx^4,\,z^4,\,x^7)$.
We determine  $\mathcal{C}_A(2)$. In degree $1$, since $x\circ f$ and
$y\circ f$ are partials of degrees $5$ and $4$, respectively, only $z$
satisfies $\maxR^{\,4}z\circ f=0$, since $z\circ f=Z^{[3]}$. So
$\mathcal{C}_A(2)_1=\langle z\rangle$. In degree $2$, all multiples of
$z$ belong to $\mathcal{C}_A(2)_2$, but also $y^2$, since it is the
initial term of $y^2-x^3$ and
$(y^2-x^3)\circ f=-Y^{[2]}$. In a similar manner, we can see that only
multiples of $z$ and $y^2$ belong to $\mathcal{C}_A(2)_3$, and we have
\begin{align*}
\mathcal{C}_A(2)_1 &= \langle z\rangle,\qquad \mathcal{C}_A(2)_2 = \langle R_1z, y^2\rangle,\\
\mathcal{C}_A(2)_4 &=\langle R_2\cdot \mathcal{C}_A(2)_2\rangle,\quad \mathcal{C}_A(2)_5=\langle R_3\cdot\mathcal{C}_A(2)_2,  yx^4\rangle\\
\mathcal{C}_A(2)_7 &=\langle  R_3\cdot \mathcal{C}_A(2)_4, yx^6,x^7\rangle=\langle  R_2\cdot \mathcal{C}_A(2)_5,x^7\rangle.
\end{align*}
Note that $\mathcal{C}_A(2)$ is an ideal of $R$, so we include in it the generators of the ideal defining $A^\ast$.  We have  $\mathcal{C}_A(2)=(z,\,y^2,\,yx^4,\,x^7)$. Now
let $g=X^{[6]}+X^{[3]}Y^{[2]}+Y^{[4]}$. We have $\Ann g=(z,\,y^2-x^3,\,
xy^3)$, with associated graded ideal $(\Ann g)^*=(z,\, y^2,\, x^4y,\,x^7)$.
The Hilbert function $H_g=H(R/\Ann g)=(1,2,2,2,2,1,1)$, and the decomposition is
\begin{equation*}
\mathcal{D}_g=\bigl(H_g(0)=(1,1,1,1,1,1,1),\, H_g(1)=(0,1,1,1,1,0)\bigr).
\end{equation*}
Now according to \eqref{modlemi} in Lemma \ref{modificationlem}, $\Ann
g$ is a $2$-modification of $\Ann f$, which shows that the union
mentioned in \eqref{modlemiii} can in this particular case be obtained
from a single AG $a$-modification.
\end{example}

\begin{remark}
\label{modificationlemrmk}
The previous example shows that in \eqref{modlemiii} of Lemma
\ref{modificationlem} we cannot take the union of only the relatively
compressed $a$-modifications (see Definition \ref{compressdef} below).
We here need all $a$-modifications since if $J$ is a relatively
compressed $2$-modification of $\Ann f$, then $z\notin J^\ast$ (in fact $J^\ast$ can contain no element of order less than three) because the
maximum $H(R/J)(2)=M(2,\mathcal{D}_{<2})=(0,1,4,1,0)$ and
$H(R/J)=(1,3,6,3,2,1,1)$  (see Proposition \ref{maxprop} below).
\end{remark}

\index{dual generator of A@dual generator of $A$!and determining $Q(a)$}%
\index{CaR ideal@$\mathcal{C}_A(a)$ ideal of $R$}%
\begin{corollary}
\label{partialdecompcor} 
Let $f$ be a dual generator of a socle-degree-$j$ AG algebra $A$ and consider $A'=R/\Ann f_{\ge j-a}$ where $f_{\ge j-a}=f_j+\cdots +f_{j-a}$. Then there is equality between the partial decompositions $\mathcal{D}_{\le a}(A)=\mathcal{D}_{\le a}(A')$.  These depend only on  $f\mod\mathfrak{D}_{\le j-a-1}$ up to unit in R action as in Lemma \ref{modificationlem}\ref{modlemi}. Also, $f_{\ge j-a}$ is an $(a+1)$-modification of $f$.
\end{corollary}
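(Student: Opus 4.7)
The plan is to derive all three assertions as direct consequences of Lemma~\ref{modificationlem}, once we correctly identify the relevant integer $(a+1)$-modification. The crucial observation is purely formal: by construction
\begin{equation*}
f - f_{\ge j-a} = f_{j-a-1} + f_{j-a-2}+\cdots + f_0 \;\in\; \mathfrak{D}_{\le j-a-1} = \mathfrak{D}_{\le j-(a+1)},
\end{equation*}
so setting $w = f_{\ge j-a}-f \in \mathfrak{D}_{\le j-(a+1)}$ we have $f_{\ge j-a} = f + w$.

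First I would apply Lemma~\ref{modificationlem}(\ref{modlemi}.i) with this choice of $w$: it says that $\Ann(f+w) = \Ann f_{\ge j-a}$ is an $(a+1)$-modification of $\Ann f$. This is the final sentence of the corollary.

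Next, to obtain the equality of partial decompositions, I would invoke Lemma~\ref{modificationlem}(\ref{modlemii}): since $\Ann f_{\ge j-a}$ is an $(a+1)$-modification of $\Ann f$, part (b.ii) applied with the ``$a$'' of that lemma equal to $a+1$ gives $Q_A(k) \cong Q_{A'}(k)$ as $R/\mathcal{C}_A(k+1)$-modules for all $k \le a$. Taking Hilbert functions yields $H_A(k) = H_{A'}(k)$ for $k \le a$, and hence $\mathcal{D}_{\le a}(A) = \mathcal{D}_{\le a}(A')$.

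Finally, for the intermediate claim that $\mathcal{D}_{\le a}$ depends only on the class of $f$ modulo $\mathfrak{D}_{\le j-a-1}$ up to the action of a unit of $R$, I would reason as follows. Suppose $g \in \mathfrak{D}_j$ satisfies $g \equiv u\circ f \pmod{\mathfrak{D}_{\le j-a-1}}$ for some unit $u \in R$. Then by Lemma~\ref{modificationlem}(\ref{modlemi}.ii), $\Ann g$ is an $(a+1)$-modification of $\Ann f$, so part (b.ii) of the same lemma (again with ``$a$'' equal to $a+1$) yields $Q_{R/\Ann f}(k) \cong Q_{R/\Ann g}(k)$ for all $k \le a$, and hence identical partial decompositions through degree $a$. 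Since $f$ and $f_{\ge j-a}$ represent the same class modulo $\mathfrak{D}_{\le j-a-1}$ (with $u=1$), this both implies and subsumes the equality $\mathcal{D}_{\le a}(A)=\mathcal{D}_{\le a}(A')$ just established. No substantial obstacle appears: the entire argument is bookkeeping in the index of the modification, the only thing to verify carefully being that the degree drop $f - f_{\ge j-a}$ lies in $\mathfrak{D}_{\le j-(a+1)}$ rather than $\mathfrak{D}_{\le j-a}$, which is where the shift from ``$a$'' to ``$a+1$'' in the modification index comes from.
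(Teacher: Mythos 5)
Your proof is correct and takes essentially the same approach as the paper: both derive everything from Lemma~\ref{modificationlem}(b.ii) applied with modification index $a+1$. The only cosmetic difference is that you verify the $(a+1)$-modification claim via the dual-generator criterion of part (a.i) (noting $w=f_{\ge j-a}-f\in\mathfrak{D}_{\le j-a-1}$), while the paper verifies the defining ideal-intersection condition $(\Ann f)\cap\maxR^{\,j-a}=(\Ann f_{\ge j-a})\cap\maxR^{\,j-a}$ directly from Definition~\ref{amoddef}; the two are equivalent by that same lemma.
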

\begin{proof}  
Evidently $(\Ann f )\cap \maxR^{\,j-a}=(\Ann f')\cap \maxR^{\,j-a}$ so by Definition \ref{amoddef} $f$ is an $(a+1)$-modification of $g$. By Lemma \ref{modificationlem}\ref{modlemii}ii. $\dim_\kk Q_A(k)=\dim_\kk Q_{A'}(k)$ for $k\le a$, hence  $\mathcal{D}_{\le a}(A)=\mathcal{D}_{\le a}(A')$. 
\end{proof}\par
We have the following immediate consequence of this Corollary.
\index{subquotient $Q_A(a)$!Hilbert function $H_A(a)$ determined by $f_{\ge j-a}$, Principle \ref{principlefj-a}}%
\begin{principle}
\label{principlefj-a}
Let $a\ge 1$.  The term $f_{j-a}$ of a dual generator $f$ of a socle-degree-$j$ AG algebra $A$ can only influence  $H_A(a), H_A(a+1), \ldots $, and cannot influence $H_A(0),\ldots, H_A(a-1)$. That is, \emph{the sequence $H_A(a)$ is determined by the isomorphism class, up to differential unit multiple, of $f_{\ge j-a}=f_j+\cdots +f_{j-a},$ where $f=\sum_i  f_{j-i}$ is the dual generator of $A$}.
\end{principle}
It is important to note that for $a\ge 1$ a change to $f_{j-a}$ may change not only $H(a)$ but also $H(a+1), H(a+2),\ldots ,H(j-2)$.  See Sections \ref{cautionsec} and \ref{Hasec} for examples. Also since $Q_A(a)$ is an $A^\ast$ module, and $A^\ast$ may depend on all of the dual generator $f$, we can't quite say that $Q_A(a)$ is determined by $f_{\ge j-a}$. Rather, the  $A^\ast/C(a+1)$ module $Q_A(a)\cong  C_A(a)/C_A(a+1)$ is determined by $f_{\ge j-a}$.

\index{subquotient $Q_A(a)$!determined by $f_{\ge j-a}$}%
\subsubsection{Relatively compressed $a$-modification}

Fix $R={\kk}\{x_1,\ldots , x_r\}$.  J. Emsalem and the first author showed that there is an upper bound $M(a, \mathcal{D}_{<a})$ for the $H(a)$ continuation of any partial Hilbert function decomposition $\mathcal{D}_{<a}$, and it actually occurs \cite[Theorem 3.3]{I1}. We restate this result and give some consequences and examples.  We will need to assume that $\kk$ is an infinite field for existence results such as Proposition  \ref{maxprop}(b) for relatively compressed or compressed AG algebras.  \par Let $a\ge 1$ be an integer. Recall that $r_i=\dim_{\kk} R_i$.
Given a partial Hilbert function decomposition $\mathcal{D}_{<a}$ that occurs for some AG algebra $A$, let $\sum \mathcal{D}_{<a}=(h_0,h_1,\ldots )$ and let $M(a,\mathcal{D}_{<a})$ be the following sequence, symmetric about $(j-a)/2$:
\begin{equation}\label{Maeq}
M(a,\mathcal{D}_{<a})_i=\begin{cases} r_i-h_i &\text { for } i\le (j-a)/2\\
M(a,\mathcal{D}_{<a})_{j-a-i}& \text { for } i> (j-a)/2.
\end{cases}
\end{equation}
\index{maximum $H_A(a))$ given $\mathcal{D}_{<a}$}%
\index{M(a,)@$M(a,\mathcal{D}_{<a})$, maximum $H(Q(a))$ given $\mathcal{D}_{<a}$}%
When $A=R/\Ann f$ is understood, we may write $M(a)$ or $M_f(a)$ for
$M(a,\mathcal{D}_{<a})$. 
We have defined $a$-modification in Definition \ref{amoddef}. We now define $a$-RCM. Recall that $r_i=\dim_{\kk}R_i$, and we denote by 
\begin{equation}\label{compeqn}
{c_{i,j}:=\min\left\{ r_i, r_{j-i}\right\}},
\end{equation}
the dimension of the vector space of degree $i$ partials of a generic homogeneous form of degree $j$ in $r$ variables (see \cite[p.80]{IKa} for historical references, and a short proof).\par

\index{a-RCM@$a$-RCM}%
\index{a-modification@$a$-modification!relatively compressed}%
\begin{definition}
\label{compressdef}
Fix a codimension $r$ (i.e.\ fix $R$), a socle degree $j$ and an integer $a\ge 1$.
\begin{enumerate}[(a)]
\item An Artinian Gorenstein algebra $A=R/\Ann f_A$ satisfying $f_A=f+h_{j-a}$ where $\mathcal{D}(A)_{<a}$ is fixed and $H_A(a)=M\bigl(a,\mathcal{D}(A)_{<a}\bigr)$, the maximum possible, is termed a \emph{relatively compressed $a$-modification}\index{relatively compressed a-modification@relatively compressed $a$-modification!see{ $a$-RCM}} ($a$-RCM) of $R/\Ann f$. \par

\item
An Artinian Gorenstein algebra $A$ that has the maximum Hilbert function given the socle degree $j$, so $h_i=c_{i,j}$ is called \emph{compressed Gorenstein}\index{compressed Gorenstein}. 
\end{enumerate}
\end{definition} 
Note that a compressed AG algebra need not be homogeneous (nor isomorphic to
a homogeneous AG algebra): however it has a symmetric Hilbert function and by Lemma \ref{Watanabelem} its associated graded algebra is also a compressed AG algebra.\par
RCM's are studied in \cite[\S 3]{I1}. By Proposition \ref{maxprop} below an $a$-RCM $A'$ of a given AG algebra $A$ always exists and the set of all of them form an irreducible family. We will use them in some examples and in our discussion of connected sums. That the symmetric subquotient $Q(a)$ for an $a$-RCM of socle degree $j$ is generated in degrees no greater than $\lceil (j-a)/2\rceil$ is in contrast to many of the examples we will study later. The following result from \cite{I1} was joint with J. Emsalem.
\begin{proposition}{\rm \cite[Theorem 3.3]{I1}.}
\label{maxprop} 
Assume that $\mathcal{D}_{<a}=\bigl(H_A(0),\ldots ,H_A(a-1)\bigr)$ occurs as a partial Hilbert function decomposition for an Artinian Gorenstein quotient ${A=R/I}$, ${I=\Ann f}$.
\begin{enumerate}[(a)]
\item Then the sequence $M(a,\mathcal{D}_{<a})$ is a termwise upper bound for the difference 
\[ 
H(A)-\sum\mathcal{D}_{<a},
\]
hence also for $H(a)$. 
\item Assume $\kk$ is infinite. The Hilbert function decomposition
$\mathcal{D}_{\le a}=\bigl(\mathcal{D}_{<a}, M(a,\mathcal{D}_{<a})\bigr)$ occurs as the complete Hilbert function decomposition of an AG quotient $R/\Ann(f+h)$ when $h\in \mathcal{D}_{\le j-a}$ satisfies $h_{j-a}$ is general enough or is generic. 
\item For such $h$ the subquotient $Q(a)$ of $A^\ast$ has Hilbert function  $H(a)=M(a,\mathcal{D}_{<a})$. Also, $Q(a)$ is generated in degrees no greater than $\lceil (j-a)/2\rceil$.
\item The algebras of the form $B=R/\Ann(f+h)$, $f$ fixed, $h\in \mathfrak{D}_{j-a}$ having this Hilbert function decomposition $\mathcal{D}_{\le a}$ form an irreducible family.
\end{enumerate}
\end{proposition}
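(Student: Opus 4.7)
\medskip\noindent\textbf{Plan.} Part~(a) is immediate from Theorem~\ref{symdecompthm}. Each summand $H_A(u)=H\bigl(Q_A(u)\bigr)$ is non-negative and symmetric about $(j-u)/2$, and $H(A)=\sum_u H_A(u)\le r_i$ termwise, so for $i\le (j-a)/2$
\[
H_A(a)_i \;\le\; H(A)_i - \sum_{u<a}H_A(u)_i \;\le\; r_i - \Bigl(\textstyle\sum\mathcal{D}_{<a}\Bigr)_i \;=\; M(a,\mathcal{D}_{<a})_i,
\]
and the symmetry of $H_A(a)$ about $(j-a)/2$ propagates the bound to all $i$.

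\medskip\noindent For (b) and (c), with $\kk$ infinite, I would set $F=f+h$ for $h\in\mathfrak{D}_{j-a}$ generic and $B=R/\Ann F$. Because $h\in\mathfrak{D}_{\le j-a}$, Lemma~\ref{modificationlem}(a.i) identifies $\Ann F$ as an $a$-modification of $\Ann f$; parts (b.i)--(b.ii) of that lemma then yield $\mathcal{C}_B(k)=\mathcal{C}_A(k)$ for $k\le a$, and in particular $\mathcal{D}_{<a}(B)=\mathcal{D}_{<a}(A)$. It remains to verify that for generic $h$, $H_B(a)=M(a,\mathcal{D}_{<a})$ and $H_B(u)=0$ for every $u>a$.

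\medskip\noindent The main obstacle is this saturation step. Using the description~\eqref{Qaeqn} of $Q_B(a)_i$, the dimension in degree $i\le (j-a)/2$ is governed by the rank of the contraction map $\varphi\mapsto \varphi\circ(h+f_{j-a})$ acting on a finite-dimensional subspace of $R_i$ that depends only on $f_{\ge j-a+1}$ (in particular, on $\mathcal{D}_{<a}$ by Corollary~\ref{partialdecompcor}). The classical compressed-algebra fact that for generic $h'\in\mathfrak{D}_{j-a}$ the contraction map $\varphi\mapsto \varphi\circ h':R_i\to \mathfrak{D}_{j-a-i}$ is injective whenever $i\le (j-a)/2$, applied with $h'=h+f_{j-a}$, then forces $\dim Q_B(a)_i=M(a,\mathcal{D}_{<a})_i$; the exceptional values of $h$ form a proper closed subvariety of the irreducible affine space $\mathfrak{D}_{j-a}$. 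Once saturation is achieved in the upper half, the symmetry of each summand $H_B(u)$ takes over: for $u>a$, $H_B(u)$ is symmetric about $(j-u)/2<(j-a)/2$ and supported in $[0,j-u]$, so vanishing on $[0,(j-a)/2]$ propagates to its full support, giving $H_B(u)=0$ for all $u>a$ and hence $H_B(a)=M(a,\mathcal{D}_{<a})$ as required. The generator-degree bound in (c) follows from the reflexive pairing $Q_B(a)_i\times Q_B(a)_{j-a-i}\to\kk$ of Theorem~\ref{symdecompthm}: an $A^\ast$-generator of $Q_B(a)$ in degree $>\lceil(j-a)/2\rceil$ would by duality force a corresponding socle element in the lower half, disturbing the already determined Hilbert function $M(a,\mathcal{D}_{<a})$.

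\medskip\noindent Finally, (d) is immediate: $\mathfrak{D}_{j-a}$ is an irreducible affine space, and the condition on $h$ for $B=R/\Ann(f+h)$ to realize the decomposition $\bigl(\mathcal{D}_{<a},\,M(a,\mathcal{D}_{<a})\bigr)$ is Zariski-open by upper-semicontinuity of $\dim_\kk R_i\circ F$ and non-empty by (b), so the family is irreducible.
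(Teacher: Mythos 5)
The paper's own proof addresses only the second sentence of part~(c) — that $Q(a)$ is generated in degrees $\le\lceil(j-a)/2\rceil$ — since the rest of the proposition is quoted from \cite[Theorem~3.3]{I1}. Your attempt tries to re-derive all four parts, which is ambitious, but several key steps have genuine gaps.

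\textbf{Part (a).} For $i\le(j-a)/2$ your argument does bound the full difference $H(A)-\sum\mathcal{D}_{<a}=\sum_{u\ge a}H_A(u)$ by $M(a,\mathcal{D}_{<a})$. But for $i>(j-a)/2$ you appeal only to the symmetry of $H_A(a)$, which bounds $H_A(a)_i$ and nothing more: the summands $H_A(u)$ with $u>a$ are symmetric about \emph{different} centers $(j-u)/2<(j-a)/2$, so $\sum_{u\ge a}H_A(u)$ is not itself symmetric about $(j-a)/2$. The claim that $M$ bounds the whole difference $\sum_{u\ge a}H_A(u)$ in the upper range therefore does not follow from what you wrote; in fact it requires more (the memoir's argument uses the full structure, and in examples one sees the Macaulay/growth conditions on $\sum_{u\le a}H_A(u)$ doing real work).

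\textbf{Part (b) — the saturation step.} The passage from ``the contraction map $\varphi\mapsto\varphi\circ h'$ on $R_i$ is injective for generic $h'$'' to ``$\dim Q_B(a)_i=M_i$'' is not an application of that single compressed-algebra fact. What one needs in degree $i\le(j-a)/2$ is that $(R\circ F)^\ast_i=\mathfrak{D}_i$, and the degree-$i$ leading terms of partials of $F=f+h$ arise from two sources: the old partials $(R\circ f_{\ge j-a+1})^\ast_i$ and the contractions $\psi\circ(h+f_{j-a})$ with $\psi\in(\Ann f_{\ge j-a+1})^\ast_{j-a-i}$. Showing that, for generic $h$, these together span $\mathfrak{D}_i$ is the crux and is not addressed by the injectivity you cite, which ignores $\Ann f$ entirely. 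Moreover the conclusion is mildly circular as stated: genericity can at best give $\sum_{u\ge a}H_B(u)_i=M_i$ in the half-range; to isolate $H_B(a)_i=M_i$ you first need $H_B(u)_i=0$ for $u>a$, which is what you then try to deduce. (A cleaner route, closer to the paper's spirit, is to first argue directly that for generic $h$ there are no ``extra cancellations'' producing nonzero $Q^\vee_B(u)_i$ with $u>a$ in degrees $i\le(j-a)/2$, and only then read off $H_B(a)=M$.)

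\textbf{Part (c) — generation bound.} Here you take a genuinely different route from the paper, arguing by reflexivity that a generator of $Q_B(a)$ in degree $>\lceil(j-a)/2\rceil$ would produce a socle element in low degree, ``disturbing'' the Hilbert function $M$. But having a socle element in $Q_B(a)_{i'}$ with $i'<\lfloor(j-a)/2\rfloor$ does not by itself contradict $H\bigl(Q_B(a)\bigr)=M$; you would need a separate argument (for instance, that $C_B(a+1)=0$ and $(\Ann F)^\ast_{i'+1}=0$, so a socle element maps to a zero-divisor in $R$), which isn't in the proposal and in any case requires care since the $A^\ast$-module structure on $C(a)$ is not the naive one (cf.\ the paper's Example~\ref{usevarrho}). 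The paper instead works on the dual side: it shows $(R\circ f)^\ast_{j'}+(\Ann f)_{j-a-j'}\circ h=\mathfrak{D}_{j'}$ with $j'=\lfloor(j-a)/2\rfloor$, then applies $R_i$ to conclude that $Q^\vee(a)$ in degrees $<j'$ is obtained from degree $j'$ by contraction — a direct computation with no duality step.

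Part~(d) is fine. Overall, the identification of $a$-modification via Lemma~\ref{modificationlem} and the irreducibility argument match the paper's framework, but the upper bound for $i>(j-a)/2$ in (a), the saturation step in (b), and the socle argument in (c) each need substantial repair.
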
 
\begin{proof}[Proof of (c): generation of $Q(a)$]  
(This result is not stated in \cite{I1}, so we show it here). The Hilbert function of an $a$-RCM $F=f+h$ of $f$ (here $ h\in \mathfrak{D}_{\le j-a}$) agrees with that of a compressed algebra of socle degree $j-a$, in degrees $i\le (j-a)/2$.  Let $j'=\lfloor (j-a)/2\rfloor$. We may ignore terms of $f$, $h$ in degrees less than $j-a$ since they cannot contribute to $H(a)$, and $H(u)=0$ for $u>a$; for simplicity we assume  $f_{<{j-a}}=h_{<{j-a}}=0$, and we set $I=\Ann f$, $J=\Ann F$, $A'=R/J $. It follows that 
\begin{align}
(R\circ f)^*_{j'}+I_{j-a-j'}\circ h&=\mathfrak{D}_{j'}. \text { Then }\notag\\
R_i\circ \bigl((R\circ f)^*_{j'}+I_{j-a-j'}\circ h\bigr)&=R_i\circ \mathfrak{D}_{j'}=\mathfrak{D}_{j'-i}.
\end{align}
Thus, in the notation of Lemma~\ref{Qdualdescription}, Definition~\ref{Wuvdef} (see also Lemma \ref{dualCalem}), the dual $Q_{A'}^\vee(a)\subset \mathfrak D$ is generated in degrees at least $j'$, which is equivalent to $Q_{A'}(a)$ being generated in degrees less or equal $j-a-j'=\lceil (j-a)/2\rceil$, as claimed.
\end{proof}
\begin{example}[$2$-RCM]
\label{genericmod1ex} 
(a) Let $f=X^{[5]}+X^{[2]}Y^{[2]}+Z^{[3]}$, and $R=\kk\{x,y,z\}$; then $\Ann f=(y^2-z^3,\,xz,\,yz,\,z^4,\,x^6)$, and we have $H_f=(1,3,3,2,1,1)$ and 
\begin{equation}\label{Dfeq}
\mathcal{D}_f=\bigl(H_f(0)=(1,1,1,1,1,1),\, H_f(1)=(0,1,1,1,0),\, H_f(2)=(0,1,1,0)\bigr).
\end{equation} 
Here $f_{\ge 4}=X^{[5]}+X^{[2]}Y^{[2]}$ satisfies $\Ann(f_{\ge 4})=(z,\,y^2-x^3,\,x^6)$, and $H_{f_{\ge 4}}=(1,2,2,2,1,1).$ The maximum possible Hilbert function for any $F=f_{\ge 4}+f'_3$ is $H_F=H_{f_\ge 4}+(0,1,1,0)=H_{f_{\ge 4}}+H_f(2)$. Thus,   $H_f(2)=M(2,\mathcal{D}_{<2})$, so $f$ itself is a $2$-RCM of $f_{\ge 4}$.\vskip 0.2cm
\par\noindent
(b) The RCM need not have a different Hilbert function decomposition. Let $f'= X^{[5]} + X^{[3]}Z + X^{[2]}Y^{[2]} + XY^{[3]}+Y^{[4]} + Z^{[3]}$.\footnote{The Example 4.5 of \cite{I1} is the same AG algebra, but because of a misprint, the dual generator there is missing the term $XY^3.$} Then $H_{f'}=(1,3,3,2,1,1)$, the ideal $\Ann f'=(yz,\,xy-y^2+xz,\,xz+z^2-x^3)$ and $\mathcal{D}_{f'}=\mathcal{D}_f$, which is the same as that for $(f')_{\ge 4}$. So $f'$ is a (somewhat trivial) RCM of $(f')_{\ge 4}$ (this fact is related to $X^{[3]}Z$ being an exotic summand of $(f^\prime)_{\ge 4}$ - see Example \ref{blindexotics}b).\vskip 0.2cm\par\noindent
(c) By removing the $XY^{[3]}$ term from $f'$ we get $g = X^{[5]} + X^{[3]}Z + X^{[2]}Y^{[2]} + Y^{[4]} + Z^{[3]}$, determining the ideal $J=\Ann g=(yz,\,xz^2,\,xz+z^2-x^3,\,xy^2-x^2z,\,x^2y-y^3)$, with Hilbert function $H(R/J)=(1,3,4,2,1,1)$ and Hilbert function decomposition 
\[\mathcal{D}_g=\bigl(H_g(0)=(1,1,1,1,1,1),\ H_g(1)=(0,1,2,1,0),\ H_g(2)=(0,1,1,0)\bigr),\] 
where $H_g(2)=M\bigl(2, (\mathcal{D}_g)_{<2}\bigr)$. Thus, removing the $XY^{[3]}$ term from $f'$ leads to a \emph{larger} component $H_g(1)>H_{f^\prime}(1)$ of $\mathcal{D}_g$ (see cautionary Example \ref{caution3ex}). Here, also, $g$ and $g_{\ge 4}$ have the same Hilbert function decomposition, since $h=z-x^2+y^2-x^3$ satisfies ${-h\circ (g_{\ge 4})=XZ+Z}$ and ${-xh\circ ( g_{\ge 4})=Z}$ yielding $Q^\vee_{g\ge 4}(2)=\langle Z,XZ\rangle$ (for the dual $Q^\vee(a)$ of $Q(a)$, see Section~\ref{constructsec}); thus,
$g$ is an RCM $2$-modification of $g_{\ge 4}$. 
\end{example}
\index{a-RCM@$a$-RCM}%
For numerical reasons, the RCM's in codimension $r\ge 3$ may have a symmetric summand $Q(a)$ that is not generated in a single degree.  
\begin{example}[$2$-RCM with $Q(2)$ generated in degrees $3$, $4$]
\label{Qamax2ex}
Let $R={\kk}\{x,y,z\}$ and consider a graded Artinian Gorenstein algebra $A(0)=R/\Ann f$ of socle degree $11$ and Hilbert function $H(A)=(1,3,6,9,11,13,13,11,9,6,3,1)$. Such $A$ exist by the HF criterion (\cite{BuEi, D,IKa}). We assume $H(1)=0$. Then by \eqref{Maeq} and Proposition \ref{maxprop} the maximum Hilbert function $H_{A'}(2)=M(2,\mathcal{D}_{<2})$ for a $2$-RCM $A'=R/\Ann (f+h)$, $h\in \mathfrak{D}_{9}$ is $H_{A'}(2)=(0,0,0,1,4,4,1,0,0,0)$; then $H_{A'}=(1,3,6,10,15,17,14,11,9,6,3,1)$. However, $Q(2)$ is generated in degrees $3$ and $4$, as $H_{A'}(2)_3=1$ so $\dim_{\kk}R_1\cdot Q(2)_3\le 3$, but $\dim_{\kk}Q(2)_4=4$.
\end{example}
\index{curvilinear algebra}%
\index{a-RCM@$a$-RCM!from a curvilinear algebra}%
\begin{example}[$a$-RCM's from a curvilinear $A$]
\label{a-RCMex} 
Begin with $f_5=X^{[5]}$, of Hilbert function  $H_{f_5}=H(0)=(1,1,1,1,1,1)$ (we call this Hilbert function ``curvilinear'') and assume $R=\kk\{x,y,z,w\}$. Then a 1-RCM $f=f_5+f_4$, with $f_4$ generic in $\mathfrak{E}=R^\vee$ satisfies  $H(1)=(0,3,9,3,0)$ so $\mathcal{D}_f=\bigl(H_f(0), H_f(1)\bigr)$ with $H_f=(1,4,10,4,1,1)$. A $2$-RCM $X^{[5]}+f_3$ with $f_3$ generic would have $H(1)$ zero, and $H(2)=(0,3,3,0)$ so Hilbert function $(1,4,4,1,1,1)$ and a $3$-RCM has $H(1)=H(2)=(0,\ldots, 0)$ and $H(3)=(0,3,0)$ so Hilbert function $(1,4,1,1,1,1)$.\par
If we begin instead with  $F_{\ge 4}=X^{[5]}+XY^{[2]}Z$ from Example \ref{symdecompex}, where 
\begin{equation*}
\mathcal{D}_{F_{\ge 4}}=\bigl(H_{F_{\ge 4}}(0)=(1,1,1,1,1,1),\, H_{F_{\ge 4}}(1)=(0,2,4,2,0)\bigr),
\end{equation*}
then a $2$-RCM defined by $F_{\ge 4}+f_3$ with $f_3$ generic (here $f_3=W^{[3]}$ would suffice) has $H(2)=(0,1,1,0)$, so Hilbert function $(1,4,6,3,1,1)$, and $F=F_{\ge 4}+W^{[2]}$ is itself a $3$-RCM of $F_{\ge 4}$.
\end{example}
\index{a-RCM@$a$-RCM}%
\subsection{Cautionary examples.}
\label{cautionsec}
There are some subtleties to the study of the associated graded algebra of non-homogeneous AG algebras that may not be initially apparent. We give several cautionary examples.\footnote{Several initial arXiv postings in recent years have included -- as Lemmas -- incorrect statements that are contradicted by (i)-(iv).}\par
\index{associated graded algebra!subtleties}%
Recall from Principle \ref{principlefj-a} that for $f $ the dual generator of $A=R/\Ann f$, the Hilbert function $H_A(a)=H\bigl(Q_A(a)\bigr)$ is determined by $f_{\ge j-a}$.
\subsubsection{Cautions:}  
\begin{enumerate}[(a)]
\item Let ${f\in\mathfrak{D}}$ be a polynomial of degree $j$, set ${f'=f_{\ge j-a}=f_j+\cdots  +f_{j-a}}$, let ${A=R/\Ann f}$ and ${A'=R/\Ann f'}$. If ${a\ge 1}$ and ${Q_{f'}(k)\neq0}$ for some $k>a$, \textbf{then $H(A)$ may be smaller termwise than $H(A')$}, although $f$ may have more non-zero terms than $f'$.  (See Examples~\ref{cautionex1}, \ref{cautionex2}, \ref{caution3ex} where $a=1$.) \footnote{We have other examples with higher $a$, and some with no exotic terms in $f$.}
\item  In general the space of leading forms ${(R\circ f_{\ge j-a})_{k}^\ast=\langle \lt (g)\mid g\in (R\circ f_{\ge j-a})_{\le k}\rangle\cap \mathfrak{D}_k}$ \textbf{is not contained in} ${(R\circ f)_{k}^\ast=\langle \lt (g)\mid g\in (R\circ f)_{\le k}\rangle\cap \mathfrak{D}_k}$ (equivalent to (i.)).
\item  And $(\Ann f_{\ge j-a})^*_k$ \textbf{need not contain} $(\Ann f)^*_k$ (same examples as for (i.)). 
\item  We have for $f=f_j+f_{j-1}+\cdots f_{j-a}+\cdots$, the termwise inequality
\begin{equation}\label{HFineq} 
H_{f_j+f_{j-1}+\cdots f_{j-a}}\ge  H_f(0)+\cdots + H_f(a),
\end{equation}
\textbf{but there need not be equality} (Examples \ref{cautionex1}, \ref{cautionex2}).
\item The Hilbert function $H(A)$ \textbf{is not termwise semicontinuous} under deformation of the Artinian algebra $A$ (see \cite{Bri,I8} in codimension two). However, each partial sum $\sum_{i=0}^{k}H(A)_i$ is semicontinuous \cite[\S 4.1]{I1}.
\item The set of sequences possible for $H(A)$, so for symmetric decompositions $\mathcal{D}(A)$ in certain constructions for $A=R/\Ann f$, $f$ fixed, \textbf{may depend upon the characteristic} (Example \ref{chardependex}). We don't know if the set of possible Gorenstein sequences of fixed embedding dimension and socle degree, or the set of possible decompositions of a given Gorenstein sequence might themselves depend upon the characteristic (Question \ref{Frobenius2ques}).
\item  The AG algebra $A=R/\Ann f$ determined by $f$ is evidently the same as that determined by ${\sf u}\circ f$ for any unit ${\sf u} \in R$. Thus, the condition that $f={\sf u}\circ f_{>u}$ for a differential unit is natural, and replaces the notion $f_{\le u}=0$, which is not natural.  Also, the condition ``$f_u$ is general enough'' or ``generic''' is natural in the same sense of  being invariant under any map $f\to {\sf u}\circ f$ for $\sf u$ a unit of $R$.
\item Assume $a\ge 1$ and $A=R/\Ann f$. It may not be possible to attain a certain given partial symmetric decomposition $\mathcal{D}_{\le a}(A)$ of the Hilbert function as the complete symmetric decomposition $\mathcal{D}(B)$ for an algebra $B=R/\Ann G$ using a dual generator $G=G_{\ge j-a}=G_j+G_{j-1}+\cdots +G_{j-a}$, no matter the choice of $G$. We call this partial non-ubiquity of $\mathcal{D}_{\le a}(A)$ (see Example \ref{nonubiq4ex} and Theorem \ref{nonubiq4thm} in Section \ref{nonubiquitycod4sec}). 
\end{enumerate}
\begin{proof}[Proof of \eqref{HFineq}] 
By Lemma \ref{modificationlem}\ref{modlemiii} applied to ${a+1}$, we have  ${\mathcal{C}(a+1)=\bigcup (\Ann g)^\ast}$, $g$ an ${(a+1)}$-modification
of $f$, hence ${\mathcal{C}(a+1)\supset (\Ann f_{\ge j-a}})^\ast$, ${f_{\ge j-a}=f_j+\cdots +f_{j-a}}$ which is itself an ${(a+1)}$-modification of $f$. Hence ${H_{f_{\ge j-a}}\ge H\bigl(R/C(a+1)\bigr)=H_f(0)+\cdots +H_f(a)}$.
\end{proof}\par\noindent
Of course, when $Q_{f_{\ge j-a}}(k)=0$ for all integers $k>a$ -- as when $M_f(a+1)=0$ -- there is equality in Equation \eqref{HFineq}.

\begin{example}[$H(R/\Ann f_{\ge 3})>H(R/\Ann f)$]
\label{cautionex1} 
(c.f.\ Example 7 in \cite{BJMR}).
Let ${R={\kk}\{x,y\}}$, ${f= X^{[4]}+X^{[2]}Y+Y^{[2]}}$ in ${\mathfrak{D}={\kk}_{DP}[X,Y]}$, ${A_f=R/\Ann f}$ and take $a=1$ so $f'=f_{\ge 3}=X^{[4]}+X^{[2]}Y$. Then $H(A_f)=H_{f}(0)=(1,1,1,1,1)$. But $ {A'=R/\Ann f'}$ has  $H(A_{f'})=(1,2,1,1,1)$, with
$H_{f'}(2)=(0,1,0)$. We also have $(R\circ f')^*_1=\langle X,Y\rangle$ but $(R\circ f)^*_1=\langle X\rangle$:  so  $\lt (R\circ f)\subsetneq  \lt (R\circ f')$.\par
Here ${\Ann f=(y-x^2,x^5)}$, ${\Ann f'=(y^2,xy-x^3)}$ and $X^{[2]}Y$ is an ``exotic summand'' of $f$ (Section \ref{exoticsumsec}). 
\index{Hilbert function!decreased by adding a term to dual generator}
\end{example}
In the next example, codimension is preserved.
\begin{example}[$H(R/\Ann f_{\ge 5})>H(R/\Ann f)$]
\label{cautionex2}
Let  $f= X^{[6]} + X^{[4]}Y + X^{[2]}Y^{[2]}$ take $ a=1$ so $ f'=f_{\ge 5}$.  Using contraction, we see that the partials of $f$ and $f'$ are
\begin{equation}
\begin{aligned}
1\circ f &= X^{[6]} + X^{[4]}Y + X^{[2]}Y^{[2]} &&& 1\circ f' &= f_{\ge 5}=X^{[6]} + X^{[4]}Y\\
x\circ f &=X^{[5]} + X^{[3]}Y + XY^{[2]} &&& x\circ f' &= X^{[5]} + X^{[3]}Y\\
x^2\circ f &=X^{[4]} + X^{[2]}Y + Y^{[2]} &&& x^2\circ f' &= X^{[4]} + X^{[2]}Y \\
x^3\circ f &=X^{[3]} + XY &&& x^3\circ f' &= X^{[3]} + XY, \  (x^2-y)\circ f' = X^{[2]}Y\\
x^4\circ f &=X^{[2]} + Y,\  (x^2-y)\circ f = Y^{[2]} &&& x^4\circ f' &= X^{[2]} + Y, \ (x^2-y)x\circ f' = XY\\
x^5\circ f &=X, \  (x^2-y)y\circ f = Y &&& x^5\circ f' &= X, \ (x^2-y)x^2\circ f' = Y\\
x^6\circ f &=1 &&& x^6\circ f' &=1
\end{aligned} 
\end{equation}
Therefore, ${(R\circ f_{\ge 5})^*_3}$ is generated by $X^{[3]}$ and $X^{[2]}Y$, and ${(R\circ f_{\ge 5})^*_2}$ is generated by $X^{[2]}$ and $XY$.  So they are not contained in ${(R\circ f)^*_3}$ and ${(R\circ f)^*_2}$, respectively. Here $H_f=(1,2,2,1,1,1,1)$, $H_{f}(3)=(0,1,1,0)$, and  ${\Ann f=(y^3,\, xy-x^3)}$.
For $f'$ we have $H_{f'}=(1,2,2,2,1,1,1)$, $H_{f'}(2)=(0,1,1,1,0)$, and $\Ann f'=(y^2, \, yx^3-x^5).$  This example also appears in a different role in \cite{BJMR}.
\end{example}
\begin{example}
\label{caution3ex} 
In a somewhat different direction, taking $g= X^{[5]} + X^{[3]}Z + X^{[2]}Y^{[2]} +Y^{[4]} + Z^{[3]}$ and $f^\prime=g+ XY^{[3]}$ from Example \ref{genericmod1ex},  we have $H_{{f^\prime}}=(1,3,3,2,1,1)$ with $H_{{f^\prime}}(1)=(0,1,1,1,0)$, but $H_g=(1,3,4,2,1,1)$ with $H_g(1)=(0,1,2,1,0)$ while $H_{{f^\prime}}(2)=H_g(2)=(0,1,1,0)$. So adding the term $XY^{[3]}$ reduces the Hilbert function from that of $H_g$ to that of $H_{f^\prime}$!
\end{example}
\subsection{Constructing an AG algebra from a dual generator.}\label{constructsec}
We next study the relation between $A=R/\Ann f$ and the dual $A^\vee=\Hom(A,{\kk})\cong R\circ f$ and, in particular, the construction of the ideal $C_A(a)$ of $A^\ast$, its dual $C_A^\vee(a)\subset \bigl(\Hom(A,{\kk})\bigr)^\ast$, and an embodiment of the dual, $C^\vee_A(a)_{\mathfrak{D}}$, in $\mathfrak{D}$; and the construction from these of the symmetric suquotient $Q_A(a)$,  its dual $Q_A^\vee(a)$ and an embodiment  $Q^\vee_A(a)_{\mathfrak{D}}$. We then give examples of constructing an Artinian Gorenstein algebra having expected symmetric decompositions by choosing suitable dual generators $f\in \mathfrak{D}$ (Examples \ref{stdex} and \ref{varydualex}).\par
By definition of $Q_A(a)\cong C_A(a)/C_A(a+1)$ where $C_A(a)$ is an ideal of $A^\ast$, we have
\begin{equation}\label{Qveedef}
Q_A^\vee (a)=\bigl(C_A(a)/C_A(a+1)\bigr)^\vee\cong \Hom_{A^\ast} \bigl(C_A(a)/C_A(a+1),{\kk}\bigr).
\end{equation}
\index{Q(a)dual@$Q_A^\vee (a)$, dual to $Q_A(a)$}%
A main goal of this section is to determine the avatar $Q^\vee_A(a)_{\mathfrak{D}}$ of $Q_A^\vee(a)$ (Lemma~\ref{Qdualdescription}, Definition~\ref{Wuvdef}, Example~\ref{qdualex}), which we will use to construct further examples. We first construct isomorphic copies of $C_A(a)$ and of $Q_A(a)$ based on the $A$-module $R\circ f$, where $A=R/\Ann f$.\par
Following \cite[Section~2]{BJMR}, consider the isomorphism 
\begin{equation}
\label{tauisomorphism}
\iota: R/\Ann f\to R\circ f, \quad \varphi\mapsto \varphi\circ f,
\end{equation}
of $R$-modules and {${\kk}$-vector} spaces. Note that ${\iota(\maxA^{\, s})=\maxA^{\,s}\circ f}$ and ${\iota\bigl((0:\maxA^{\,t})\bigr)=(R\circ f)_{\le t-1}}$, so ${\iota\bigl(\maxA^{\,s}\cap(0:\maxA^{\,t})\bigr)=(\maxA^{\,s}\circ f)_{\le t-1}}$. Therefore the inclusion of ideals 
\[
\maxA^{\,i+1}\cap(0:\maxA^{\,j+1-a-i})\subseteq
\maxA^{\,i}\cap(0:\maxA^{\,j+1-a-i})
\]
corresponds via $\iota$ to the inclusion of vector spaces
\begin{equation*}
(\maxA^{\,i+1}\circ f)_{\le j-a-i}\subseteq(\maxA^{\,i}\circ f)_{\le j-a-i},
\end{equation*}
\index{Ca ideal@$C_A(a)$ ideal of $A^\ast$}%
and we see that $\iota$ induces an isomorphism of the vector space $C_A(a)_i$ of Equation \eqref{Caieqn}
\begin{equation}
\label{Cafeq}
C_A(a)_i=\varrho\left(\frac{\maxA^{\,i}\cap(0:\maxA^{\,j+1-a-i})}
{\maxA^{\,i+1}\cap(0:\maxA^{\,j+1-a-i})}\right)\overset{\,\iota\,}{\cong}
\frac{(\maxA^{\,i}\circ f)_{\le j-a-i}}{(\maxA^{\,i+1}\circ f)_{\le j-a-i}}.
\end{equation}
Likewise, $\iota$ induces the isomorphism
\begin{equation}\label{Caf1eq}
C_A(a+1)_i\overset{\,\iota\,}{\cong}\frac{(\maxA^{\,i}\circ f)_{\le j-a-1-i}}{(\maxA^{\,i+1}\circ f)_{\le j-a-1-i}}.
\end{equation}
Since a partial $g$ that belongs to the intersection ${(\maxA^{\,i}\circ f)_{\le j-a-1-i}\cap(\maxA^{\,i+1}\circ f)_{\le j-a-i}}$ must lie in ${(\maxA^{\,i+1}\circ f)_{\le j-a-1-i}}$ we have 
\begin{equation}\label{Ca+1eq}
C_A(a+1)_i\overset{\,\iota\,}{\cong}\frac{(\maxA^{\,i}\circ f)_{\le j-a-1-i}+(\maxA^{\,i+1}\circ f)_{\le j-a-i}}{(\maxA^{\,i+1}\circ f)_{\le j-a-i}}.
\end{equation}
Recalling that $Q_A(a)=C_A(a)/C_A(a+1)$, so applying $\iota$ to Equation~\eqref{Qaeqn} and recalling also that ${\iota\bigl(\maxA^{\,s}\cap(0:\maxA^{\,t})\bigr)=(\maxA^{\,s}\circ f)_{\le t-1}}$, we have the first Equation \eqref{Qaeq} of the following Lemma.
The exact pairing
$Q_A(a)_i\times Q_A(a)_{ j-a-i}\to \kk$ of Equation \eqref{reflexiveeq}, gives an isomorphism $Q_A^\vee(a)_i\overset{\gamma}{\cong} Q_A(a)_{j-a-i}$.  Applying $\gamma$ and substituting $j-a-i$ for $i$ in Equation~\eqref{Qaeq}, we have 
the second equation \eqref{Qadualeq} of the Lemma.
\index{Q(a)decomposition@$Q(a)$ decomposition}%
\index{Q(a)dual@$Q_A^\vee (a)$, dual to $Q_A(a)$}%
\index{subquotient $Q_A(a)$}%
\begin{lemma} 
\label{Qdualdescription}
We have the following vector space isomorphisms:
\begin{equation}\label{Qaeq}
Q_A(a)_i\overset{\,\iota\,}{\cong}\frac{(\maxA^{\,i}\circ f)_{\le j-a-i}}{(\maxA^{\,i}\circ f)_{\le j-a-1-i}+(\maxA^{\,i+1}\circ f)_{\le j-a-i}},
\end{equation}
and
\begin{equation}\label{Qadualeq}
Q_A^\vee(a)_i\overset{\,\iota\circ\gamma\,}{\cong}\frac{(\maxA^{\,j-a-i}\circ f)_{\le i}}{(\maxA^{\,j-a-i}\circ f)_{\le i-1}+(\maxA^{\,j+1-a-i}\circ f)_{\le i}}
\end{equation}
\end{lemma}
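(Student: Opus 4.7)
The plan is to follow the roadmap already implicit in the discussion leading up to the statement, pushing the isomorphism $\iota:R/\Ann f\to R\circ f$, $\varphi\mapsto\varphi\circ f$ through the definitions \eqref{Caieqn} and \eqref{Qaeqn}. First I would verify the two ``dictionary'' identifications
\[
\iota(\maxA^{\,s})=\maxA^{\,s}\circ f,\qquad \iota(0:\maxA^{\,t})=(R\circ f)_{\le t-1},
\]
the second one being the observation that $\maxA^{\,t}\varphi=0$ in $A$ translates via $\iota$ to $\maxA^{\,t}\circ(\varphi\circ f)=0$, which since $\mathfrak{m}_R^{\,t}$ kills exactly the divided powers of degree $<t$, is equivalent to $\deg(\varphi\circ f)\le t-1$. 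Taking intersections yields $\iota\bigl(\maxA^{\,s}\cap(0:\maxA^{\,t})\bigr)=(\maxA^{\,s}\circ f)_{\le t-1}$, which is the key identity.

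Next I would apply this to the quotient description of $C_A(a)_i$ in \eqref{Caieqn} to obtain \eqref{Cafeq}, and similarly for $C_A(a+1)_i$ to obtain \eqref{Caf1eq}. To move from the raw form \eqref{Caf1eq} to the ``sum'' form \eqref{Ca+1eq} that fits inside the denominator of \eqref{Qaeq}, I would argue that any element of $(\maxA^{\,i}\circ f)_{\le j-a-1-i}$ that also lies in $(\maxA^{\,i+1}\circ f)_{\le j-a-i}$ already lies in $(\maxA^{\,i+1}\circ f)_{\le j-a-1-i}$, simply by combining the order and degree conditions; this lets me rewrite $C_A(a+1)_i$ as a quotient with denominator $(\maxA^{\,i+1}\circ f)_{\le j-a-i}$, so the inclusion $C_A(a+1)_i\hookrightarrow C_A(a)_i$ becomes an honest inclusion of subspaces of the same ambient vector space. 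The quotient $Q_A(a)_i=C_A(a)_i/C_A(a+1)_i$ then collapses to the formula \eqref{Qaeq}.

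For the second isomorphism \eqref{Qadualeq}, I would invoke the reflexive pairing \eqref{reflexiveeq} from Theorem~\ref{mainoldthm}, which gives a canonical isomorphism $\gamma:Q_A^\vee(a)_i\xrightarrow{\cong}Q_A(a)_{j-a-i}$. Substituting $j-a-i$ for $i$ in the already-proved \eqref{Qaeq} and precomposing with $\gamma$ yields \eqref{Qadualeq} directly; I would only need to note that the indices in the resulting denominator simplify as $\maxA^{\,(j-a-i)+1}=\maxA^{\,j+1-a-i}$ and the degree cap $j-a-(j-a-i)=i$, matching the statement.

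The only genuine obstacle is the degree-order bookkeeping: keeping straight that ``order $\ge s$'' in $R\circ f$ corresponds to $\maxA^{\,s}\circ f$ while ``degree $\le t-1$'' corresponds to the annihilator of $\maxA^{\,t}$. Once this translation is fixed and the collapse argument $(\maxA^{\,i}\circ f)_{\le j-a-1-i}\cap(\maxA^{\,i+1}\circ f)_{\le j-a-i}\subseteq(\maxA^{\,i+1}\circ f)_{\le j-a-1-i}$ is made explicit, both isomorphisms drop out by direct substitution and the self-duality supplied by Theorem~\ref{mainoldthm}; no further input is needed.
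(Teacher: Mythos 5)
Your proposal is correct and follows essentially the same route as the paper: push the isomorphism $\iota$ through the definitions of $C_A(a)_i$ and $C_A(a+1)_i$, use the collapse $(\maxA^{\,i}\circ f)_{\le j-a-1-i}\cap(\maxA^{\,i+1}\circ f)_{\le j-a-i}\subseteq(\maxA^{\,i+1}\circ f)_{\le j-a-1-i}$ to rewrite $C_A(a+1)_i$ as a subspace of the same ambient quotient, and then dualize via the pairing of Theorem~\ref{mainoldthm} by substituting $j-a-i$ for $i$. The degree/order dictionary you state ($\iota(\maxA^{\,s})=\maxA^{\,s}\circ f$ and $\iota(0:\maxA^{\,t})=(R\circ f)_{\le t-1}$) is exactly the key identity the paper records before the lemma.
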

\index{C(a)dual@$C^\vee_A(a)$ dual to $C(a)$}%

\begin{definition}
\label{Wuvdef} 
Let $A=R/I$, $I=\Ann f$, $f\in \mathfrak{D}_{\le j}$, with $f_j\not=0$ be a fixed Artinian Gorenstein quotient of $R$ having socle degree $j_A=j$. Below $\hat{\varrho}$ projects the quotient in Equation \eqref{CAveeDeq} to $(R\circ f)^\ast$, analogously to $\varrho$ in Equation \eqref{Caieqn}. 
\begin{enumerate}[(a)]
\item
We define 
\begin{align}
C^\vee_A(a)_{i,\mathfrak{D}}
&=\hat{\varrho}\left(\frac{(\maxA^{\,i}\circ f)_{\le j-a-i}}{(\maxA^{\,i+1}\circ f)_{\le j-a-i}}\right) \subset (R\circ f)^\ast_i,\label{CAveeDeq}\\
Q_A^\vee(a)_{i,\mathfrak{D}}&=\frac{(\maxA^{\,j-a-i}\circ f)_{\le i}}{(\maxA^{\,j-a-i}\circ f)_{\le i-1}+(\maxA^{\,j+1-a-i}\circ f)_{\le i}}\label{QAveeDeq},
\end{align}
also $C_A^\vee(a)_{\mathfrak{D}}=\oplus_{i=0}^j
C^\vee_A(a)_{i,\mathfrak{D}}$ and $Q_A^\vee(a)_{\mathfrak{D}}=\oplus_{i=0}^j Q_A^\vee(a)_{i,\mathfrak{D}}.$
\item Recall that for a vector subspace $S\subset R_i$ we denote by $S^\perp\subset \mathfrak{D}_i$ the perpendicular space to $S$ in the contraction pairing $R_i\circ \mathfrak{D}_i\to {\kk}$. 
For a finite-dimensional $R$-submodule $M\subset \mathfrak{D}_{\le j}$ we denote by 
\begin{equation}
M^\ast=\oplus_0^jM^\ast_i \text { where }(M^\ast)_i=\langle M\cap \mathfrak{D}_{\le i}\rangle/\langle M\cap \mathfrak{D}_{<i}\rangle,
\end{equation}
and we denote by $H(M)$ the Hilbert function $H(M)=(h_0,h_1,\ldots,h_j)$, $h_i=\dim_{\kk} M^\ast_i$.
\end{enumerate}
\end{definition} 
\begin{lemma}
\label{dualCalem}  
Let $A=R/I$ be Artinian Gorenstein of socle degree $j$, and let $f\in \mathfrak{D}_{\le { j}}$ be a dual generator, so $I=\Ann f$, and suppose $a\in [0,j-1]$. 
\begin{enumerate}[(a)]
\item The subspace $C^\vee_A(a)_{i,\mathfrak{D}}\subset \mathfrak{D}_i$ is independent of the choice of the element $f\in \mathfrak{D}_{\le j}$ defining $A$. We have $\dim_{\kk}\bigl(C^\vee_A(a)_{i,\mathfrak{D}}\bigr)=\dim_{\kk}C_A(a)_i$, and the Hilbert function $H\bigl(C^\vee_A(a)_{\mathfrak{D}}\bigr)=H\bigl(C_A(a)\bigr)$, the Hilbert function of $C_A(a)$ as an ideal of $A^\ast$.
\item\label{Qaduallem} The $A$-module $Q_A^\vee(a)_{\mathfrak{D}}$ satisfies $Q_A^\vee(a)_{\mathfrak{D}}= C_A^\vee(a)_{\mathfrak{D}}/C_A^\vee(a+1)_{\mathfrak{D}}$, and is independent of the choice of $f\in \mathfrak{D}_{\le j}$ defining $A$.
The Hilbert function $H\bigl(Q^\vee_A(a)_{\mathfrak{D}}\bigr)=H\bigl(Q_A(a)\bigr)$.
\end{enumerate}
\end{lemma}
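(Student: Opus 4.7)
The plan is to leverage the isomorphism $\iota:R/\Ann f\to R\circ f$ together with the identities \eqref{Cafeq}, \eqref{Caf1eq}, \eqref{Qaeq} already derived in the excerpt. The key preliminary observation is that any two dual generators $f,f'$ of $A$ differ by a differential unit action (Lemma~\ref{maclem1}), so $f'=u\circ f$ for some unit $u\in R$; since $\maxR^{\,i}\cdot u=\maxR^{\,i}$, every subspace $(\maxA^{\,i}\circ f)_{\le t}$ appearing in Definition~\ref{Wuvdef} depends only on $A$ and not on the particular choice of dual generator. This settles the ``independent of the choice of $f$'' clauses in both parts of the lemma.

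For part~(a), Equation~\eqref{Cafeq} already identifies $C_A(a)_i$ via $\iota$ with the subquotient $(\maxA^{\,i}\circ f)_{\le j-a-i}/(\maxA^{\,i+1}\circ f)_{\le j-a-i}$. I read $\hat{\varrho}$ as the natural projection of this subquotient into $(R\circ f)^{\ast}$, and I would check it is injective: an element of the numerator whose leading term lies in the image of the denominator must itself lie in the denominator, by tracking degrees and orders. This yields $\dim_{\kk} C^\vee_A(a)_{i,\mathfrak D}=\dim_{\kk} C_A(a)_i$, and summing over $i$ gives the Hilbert function equality $H\bigl(C^\vee_A(a)_{\mathfrak D}\bigr)=H\bigl(C_A(a)\bigr)$.

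For part~(b), the inclusions $(\maxA^{\,i}\circ f)_{\le j-a-i-1}\subset(\maxA^{\,i}\circ f)_{\le j-a-i}$ and $(\maxA^{\,i+1}\circ f)_{\le j-a-i-1}\subset(\maxA^{\,i+1}\circ f)_{\le j-a-i}$ induce an injection $C^\vee_A(a+1)_{i,\mathfrak D}\hookrightarrow C^\vee_A(a)_{i,\mathfrak D}$ (an element of the smaller numerator mapping into the larger denominator already lies in the smaller denominator, by a degree comparison). The third isomorphism theorem then gives
\[C^\vee_A(a)_{i,\mathfrak D}\big/C^\vee_A(a+1)_{i,\mathfrak D}\;\cong\;\frac{(\maxA^{\,i}\circ f)_{\le j-a-i}}{(\maxA^{\,i}\circ f)_{\le j-a-i-1}+(\maxA^{\,i+1}\circ f)_{\le j-a-i}},\]
which is exactly $Q_A(a)_i$ by \eqref{Qaeq}. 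Substituting $i\leftrightarrow j-a-i$ transforms the right-hand side into the defining formula \eqref{QAveeDeq} for $Q^\vee_A(a)_{j-a-i,\mathfrak D}$; summing over graded components produces the module identity $Q^\vee_A(a)_\mathfrak D=C^\vee_A(a)_\mathfrak D/C^\vee_A(a+1)_\mathfrak D$. Since $H(Q_A(a))$ is symmetric about $(j-a)/2$ by Theorem~\ref{mainoldthm}, the index reversal preserves the Hilbert function, giving $H\bigl(Q^\vee_A(a)_{\mathfrak D}\bigr)=H\bigl(Q_A(a)\bigr)$.

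The main obstacle I anticipate is verifying injectivity of $\hat{\varrho}$ on the defining subquotient cleanly: one must track two filtrations on $R\circ f$ simultaneously (order from $R$, and degree in $\mathfrak D$) and ensure their interaction is compatible with passing to leading terms in $(R\circ f)^{\ast}$. The re-indexing $i\leftrightarrow j-a-i$ between the $C^\vee$ and $Q^\vee$ formulas is forced by the reflexive pairing \eqref{reflexiveeq} and, once stated, is routine bookkeeping.
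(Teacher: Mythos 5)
Your ``independence of the choice of $f$'' argument is fine and matches the paper's intent (the paper states independence of $C_A(a)_i\subset A_i$, which is equivalent, since $\maxA^{\,i}\circ(u\circ f)=\maxA^{\,i}\circ f$ for a unit $u$). The rest of your proposal, however, takes a route through Equation~\eqref{Cafeq}, whereas the paper's proof really rests on the orthogonal-complement formulation of Equation~\eqref{CAvee2eq}, and this difference is not cosmetic: it is where your argument has a genuine gap.

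The paper presents $C^\vee_A(a)_{i,\mathfrak D}$ as $\hat\varrho\bigl(K(i+1,j+1-a-i)\circ f/K(i,j+1-a-i)\circ f\bigr)$ with $K(u,v)=\maxA^{\,v}+(0:\maxA^{\,u})$. In that picture a nonzero class can be represented by some $g\in(0:\maxA^{\,i+1})\circ f=(R\circ f)_{\le i}$ with $g\notin(R\circ f)_{\le i-1}$, hence of degree exactly $i$, and $g_i$ determines the class; so $C^\vee_A(a)_{i,\mathfrak D}\subset\mathfrak D_i$ as stated, and the dimension equality with $C_A(a)_i$ is immediate from $W(u,v)^\perp=K(u,v)\circ f$ under the exact pairing $\langle\cdot,\cdot\rangle_\phi$ of Equation~\eqref{exactpair2eq}. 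You instead work with the numerator $(\maxA^{\,i}\circ f)_{\le j-a-i}$ from \eqref{Cafeq}. There the degree bound is $j-a-i$, not $i$: once $a\ge 1$ the denominator $(\maxA^{\,i+1}\circ f)_{\le j-a-i}$ can contain elements of the full degree $j-a-i$, so ``take the leading term'' is not a well-defined linear map on the quotient, and the ``injectivity'' check you flag as the main obstacle cannot be carried out as stated. That obstacle is precisely the content that the paper's $K$-presentation absorbs for free.

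Even setting the well-definedness issue aside, the degree mismatch kills the Hilbert function assertion of part~(a). In your picture the piece $C^\vee_A(a)_{i,\mathfrak D}$ lands in $\mathfrak D_{j-a-i}$, so summing over $i$ produces the reversal of $H\bigl(C_A(a)\bigr)$ about $(j-a)/2$, not $H\bigl(C_A(a)\bigr)$ itself. Unlike $H\bigl(Q_A(a)\bigr)$, which is symmetric by Theorem~\ref{mainoldthm}, the Hilbert function of the \emph{ideal} $C_A(a)=\bigoplus_{a'\ge a}Q_A(a')$ is not symmetric, so this re-indexing is not harmless and the conclusion $H\bigl(C^\vee_A(a)_{\mathfrak D}\bigr)=H\bigl(C_A(a)\bigr)$ would fail. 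The same re-indexing reappears in your part~(b): your third-isomorphism computation is a correct manipulation of abstract subquotients, and its Hilbert function statement survives only because $H\bigl(Q_A(a)\bigr)$ happens to be symmetric, but the claimed identity $Q^\vee_A(a)_{\mathfrak D}=C^\vee_A(a)_{\mathfrak D}/C^\vee_A(a+1)_{\mathfrak D}$ is an identity of graded submodules of $\mathfrak D$ and requires both sides to sit in matching degrees. With the $K$-presentation, $C^\vee_A(a+1)_{i,\mathfrak D}\subset C^\vee_A(a)_{i,\mathfrak D}$ both lie in $\mathfrak D_i$ and the quotient is literally the defining quotient for $Q^\vee_A(a)_{i,\mathfrak D}$ in Equation~\eqref{QAveeDeq} --- this is why the paper can dismiss~(b) as immediate.
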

\begin{proof}[Proof of (a)]
If an element ${g\in \mathfrak{D}}$ represents a non-zero class in $C^\vee_A(a)_{i,\mathfrak{D}}$ then $g$ has degree $i$, because it belongs to ${(0:\mathfrak{m}^{i+1})\circ f}$, but not to ${(0:\mathfrak{m}^{i})\circ f}$; and if ${g'\in \mathfrak{D}}$ represents the same element then ${g_i=g'_i}$. So any class $\overline{g}$ in $C^\vee_A(a)_{i,\mathfrak{D}}$ is determined by the top-degree term of $g$, and therefore $C^\vee_A(a)_{i,\mathfrak{D}}\subset \mathfrak{D}_i$ is a homogeneous vector space, for the grading inherited from $\mathfrak{D}$. The vector space $C_A(a)_i\subset A_i$ is independent of the choice of $f$ up to a unit $u\in R$ -- the vector space is the same for $u\circ f$ as it is for $f$.
Evidently, since $C^\vee_A(a)_{i,\mathfrak{D}}$ is a subspace of $\mathfrak{D}_i$ and is a dual vector space to $C_A(a)_i$, we have $\dim_{\kk}\bigl(C^\vee_A(a)_{i,\mathfrak{D}}\bigr)=\dim_{\kk}C_A(a)_i$, implying $H\bigl(C^\vee_A(a)_{\mathfrak{D}}\bigr)=H(C_A(a))$.
\vskip 0.2cm\par\noindent
\textit{Proof of (b)}. This is immediate from the definitions and from part (a). 
\end{proof}\par
\index{Q(a)dual@$Q_A^\vee (a)$, dual to $Q_A(a)$}%
We first give an example to illustrate the need for $\hat{\varrho}$ in Equation \eqref{CAveeDeq}, that is the quotient in the equation is in general not itself in $(R\circ f)^\ast$. It also shows the use of $\varrho$ in Equation~\eqref{Caieqn}.
\begin{example}
\label{usevarrho} 
Let $f=X^{[4]}+X^{[2]}Y$, $A=R/I$, $I=\Ann f=(y^2,\, xy-x^3)$. Then ${(0:\m)=\langle x^4\rangle}$ and $(0:\m^2)=\langle y-x^2,\,x^3,\,x^4\rangle$, $\m^2\cap (0:\m^2)=\langle x^3,x^4\rangle$ so $C_A(2)_1=\varrho\bigl(\m\cap (0:\m^2)/\m^2\cap (0:\m^2)\bigr)$ where the quotient is $\langle y-x^2\rangle$, but $\varrho(y-x^2)=\overline{y}$ in $A^\ast$. Likewise, $C^\vee_A(2)_1=\hat{\varrho}\left(\langle y-x^2\rangle\circ f\right)=\hat{\varrho}\langle -Y\rangle=\langle Y\rangle$, without any need for $\hat{\varrho}$, but 
\[
C^\vee_A(0)_2=\hat{\varrho}\bigl((\m^2\circ f)_{\le 2}/(\m^3\circ f)_{\le 2}\bigr)=
\hat{\varrho}\bigl( \langle X^{[2]}+Y,X,1\rangle/\langle X,1\rangle\bigr)=
\hat{\varrho} (\langle X^{[2]}+Y\rangle)=\langle X^{[2]}\rangle
\]
shows the use of $\hat{\varrho}$ to project to $C^\vee_A(0)_2\subset (R\circ f)^\ast_2$. 
\par
This example also allows us to illustrate that the ideal structure of $C_A(a)\subset A^\ast$ must be defined using the multiplication in the quotients in Equation \eqref{Ceq}, not directly in $A^\ast$. Here, related to $C_A(2)$, note that ${y \circ f = X^{[2]}}$, ${(y-x^2 )\circ f = -Y}$, so ${xy \circ f = X}$, but ${x(y-x^2) \circ f = 0}$; thus, $y=\varrho(y-x^2)\in C_A(2)_1$ so for $x\in A^\ast$ we have $xy=\varrho\bigl(x(y-x^2)\bigr)=\varrho(0)=0$ in $C_A(2)_2$. 
\end{example}
We introduce some notation. We denote by $W(u,v)=W_A(u,v)\subset A$ the intersection
\begin{equation}\label{Weqn}
W(u,v)=W_A(u,v)=\mathfrak{m}^u\cap ( 0:\mathfrak{m}^v),
\end{equation}
an ideal of $A$.  Let $\phi: A\to \kk$ be the homomorphism defined on $A$ taking the socle surjectively to $\kk$ (Lemma \ref{Macduallem}). Evidently, under the pairing $A\times A\to \kk$, $\langle g,h\rangle_\phi=\phi(gh)$ the orthogonal complement of $W(u,v)$ is 
\begin{equation}\label{complemWuveq}
K(u,v)=\mathfrak{m}^v+(0:\mathfrak{m}^u),
\end{equation}
an ideal of $A$.  We denote by $W(u,v)^\perp=K(u,v)\circ f$, and we have
\begin{equation}\label{CAvee2eq}
C^\vee_A(a)_{i,\mathfrak{D}}=\hat{\varrho}\bigl( K(i+1,j+1-a-i)\circ f/K(i,j+1-a-i)\circ f\bigr) \subset \mathfrak{D}_i.
\end{equation}

\begin{example}
\label{qdualex} 
Let $f=X^{[3]}+Y^{[4]}$ and  $R={\kk}\{x,y\}$ then 
\[
A=R/\Ann f=R/(xy,\,x^3-y^4)=\langle \overline{1},\,\overline{x},\,\overline{y},\,\overline{x^2},\,\overline{y^2},\,\overline{y^3},\,\overline{y^4}\rangle \text { as vector space, }
\]
and $Q(0)=R/(x,y^5)=\langle 1,\,y,\,y^2,\,y^3,\,y^4\rangle$, $Q(1)\cong \langle x,\,x^2\rangle$, $H(A)=(1,2,2,1,1)$. Here 
\[
(0:\m)=\langle\overline{y^4}\rangle,\quad (0:\m^2)=\langle\overline{x^2},\overline{y^3},\overline{y^4}\rangle\quad \text { and }\quad (0:\m^3)=\langle\overline{x},\overline{y^2}\rangle+(0:\m^2)=\langle\overline{x},\overline{x^2},\overline{y^2},\overline{y^3},\overline{y^4}\rangle
\] 
So $W(1,3)=(0:\m^3)$, $W(2,3)=\m^2\cap(0:\m^3)=\langle\overline{x^2},\overline{y^2},\overline{y^3},\overline{y^4}\rangle$, $W(1,2)=W(2,2)=(0:\m^2)$. And $W(2,1)=W(3,1)=\langle \overline{y^4}\rangle$, $W(3,2)=\langle\overline{y^3},\overline{y^4}\rangle$.  Now 
\begin{align*}
W(2,3)^\perp&=\bigl(\m^3+ (0:\m^2)\bigr)\circ f=\langle Y,X,1\rangle\\
W(1,3)^\perp&=\bigl(\m^3+( 0:\m)\bigr)\circ f=\langle Y,1\rangle.
\end{align*}
We have $C(a)_i\cong\frac{W(i,5-a-i)}{W(i+1,5-a-i)}$, so
\begin{align*}
C(1)_1&\cong W(1,3)/W(2,3)=\langle \overline{x}\rangle, & 
C(1)_2&\cong W(2,2)/W(3,2)=\langle\overline{x^2}\rangle, \\
C(2)_1&\cong W(1,2)/W(2,2)=0, & C(2)_2&\cong W(2,1)/W(3,1)=0,\\
\intertext{and}
Q(1)_1&=C(1)_1/C(2)_1=\langle\overline{x}\rangle, & 
Q(1)_2&=C(1)_2/C(2)_2=\langle\overline{x^2}\rangle,
\end{align*}
We have $C^\vee(1)_{1,\mathfrak{D}}\subset  (R\circ f)^\ast$ satisfies, by Equation \eqref{CAvee2eq}  
\[
C^\vee(1)_{1,\mathfrak{D}}\cong\frac{(0:\m^2)\circ f+\m^3\circ f}{(0:\m)\circ f+\m^3\circ f}=
  \frac{\langle 1,X,Y\rangle+\langle 1,Y\rangle}{\langle 1\rangle+\langle 1,Y\rangle}\cong
  \langle X\rangle.
\] 
Notice that the $C^\vee(a)_{\mathfrak{D}}$ is naturally by construction in $(R\circ f)^\ast$. By Lemma \ref{dualCalem}(b) we have
\[
Q^\vee(1)_{1,\mathfrak{D}} =C^\vee(1)_{1,\mathfrak{D}}/C^\vee(2)_{1,\mathfrak{D}}=\langle X\rangle.
\]
\end{example}

\begin{remark}  
We have given in Lemma \ref{dualCalem} a way to construct the dual $Q_A^\vee(a)_{i,\mathfrak{D}}$. Henceforth if the algebra $A$ is clear, and we are working in $\mathfrak{D}$ we may omit the subscripts $A$ and $\mathfrak{D}$ and write, simply
$Q^\vee(a)$ for $Q_A^\vee(a)_{\mathfrak{D}}$. In calculating examples from the dual generator $f$ we usually retain the ``tails'' (lower degree portions) of elements that are partials of $f$, because cancellation of higher degree terms in linear combinations yield elements in  $Q^\vee(a)$ for higher $a$:  for example cancellation from elements ostensibly in $Q^\vee(a_1)$, $Q^\vee(a_2)$ may yield an element of $Q^\vee(a_1+a_2)$ in the proof of the key Theorem \ref{DArestrictthm}, where we use Lemma \ref{Qdualdescription}.

\par 
Also, using Lemma \ref{Qdualdescription}, we can check that there is an isomorphism
\begin{equation}\label{bigoeqn}
\bigoplus_{a\ge0}Q^\vee(a)_i \overset{\iota\circ\gamma}{\cong} \frac{(R\circ f)_{\le i}}{(R\circ
f)_{\le i-1}}.
\end{equation}
The dimension  $\dim_{\kk}Q^\vee(a)_i=\dim_{\kk}Q(a)_i$ by Lemma \ref{dualCalem}. We have $Q^\vee(a)_i\cong Q(a)_{j-a-i}$, which is isomorphic as a vector space to $Q(a)_i$. By Theorem \ref{mainoldthm} the dimension of the left-hand side of Equation \eqref{bigoeqn} gives $H(A)_i$. Thus, we can compute the Hilbert function of $A$ by taking
$H(R\circ f)=H(A^\vee)$ as in Definition \ref{Wuvdef}(b): for this we consider the leading (highest degree) term of a partial ${g\in(R\circ f)_{\le i}}$ (Definition \ref{ordernotation}(c)) rather than the order of an element of $R$.
\end{remark}

\subsubsection{Power sum dual generator.}
We now apply the results about duality to constructing Artinian Gorenstein algebras with an expected symmetric decomposition.  For the following example we let $R={\kk}\{x,y\}$, $S={\kk}\{x,y,z\}$, $\mathfrak{D}={\kk}_{DP}[X,Y]$, $\mathfrak{E}={\kk}_{DP}[X,Y,Z]$. For ${U\subseteq R}$ and ${V\subseteq \mathfrak{D}}$ we denote by $U\circ V\subset \mathfrak{D}$ the set $U\circ V=\{u\circ v
\mid u\in U, v\in V\}$ and by $\langle U\circ V\rangle$ the span of $U\circ V$. 
\begin{example}[\textsc{Power sum dual generator}]
\label{stdex}
Let $f=X^{[7]}+Y^{[5]}+(X+Y)^{[5]}+Z^{[4]}\in \mathfrak{E}$, let ${I=\Ann f}\subset S$, $ A=S/I$. Then the ideal $I=(xz,yz,x^2y-xy^2,y^4-2x^3y+x^6,z^4-x^7) $ and $\mathcal{D}(A)=\bigl(H(0),H(1),H(2),H(3)\bigr)$, with
\begin{equation}\label{Deq} 
\begin{tabular}{c|cccccccc|}
$H(0)=$&1&1&1&1&1&1&1&1\\
$H(1)=$&0 &0&\multicolumn{4}{c}{$\cdots$}&0&\\
$H(2)=$&0&1&2&2&1&0&&\\
$H(3)=$&0&1&1&1&0&&&\\
\hline
$H(A)=$&1&3&4&4&2&1&1&1
\end{tabular}
\end{equation}
\index{dual generator of A@dual generator of $A$!power sum}%
In the following table, whose columns are $\{Q(a)\circ f\}$ (meaning we consider the vector spaces $Q(a)_i\circ f$), we show in each entry the basis vectors of $\bigl(\maxA^{i}\cap(0:\maxA^{8-a-i})\bigr)\circ f$ after we mod out by $\bigl(\maxA^{i+1}\cap(0:\maxA^{8-a-i})\bigr)\circ f$ and by $\bigl(\maxA^{i}\cap(0:\maxA^{7-a-i})\bigr)\circ f$: 
\begin{equation}
\label{Ddualeq} 
\begin{array}{c|cccc|}
\text{degree in }\mathfrak{D}&Q(0)\circ f&Q(1)\circ f&Q(2)\circ f&Q(3)\circ f\\\hline
7&f&0&0&0\\
6&X^{[6]}+(X+Y)^{[4]}&0&0&0\\
5&X^{[5]}+(X+Y)^{[3]}&0&0&0\\
4&X^{[4]}+(X+Y)^{[2]}&0&Y^{[4]}+(X+Y)^{[4]}&0\\
3&X^{[3]}+(X+Y)&0&Y^{[3]},(X+Y)^{[3]}&Z^{[3]}\\
2&X^{[2]}+1&0&Y^{[2]},(X+Y)^{[2]}&Z^{[2]}\\
1&X&0&Y&Z\\
0&1&0&0&0
\end{array}
\end{equation}
Here $Y^{[3]}=(y^2-xy)\circ f$ and $(X+Y)^{[3]}=xy\circ f$ are both in 
${\bigl(\maxA^{\,2}\cap (0:\maxA^{\,4})\bigr)\circ f}$.  They are partials of order two (Definition \ref{ordernotation}e) of $f$ hence we expect them to have degree $5$; however, their degrees are three, and two less than would be expected for $\maxA^{\,2}\circ f_7$, thus they are in $Q(2)_2\circ f$. Note, we may reduce the $Q(0)\circ f$ entries by those occuring later, so we can write $X^{[4]}$, $X^{[3]}$, $X^{[2]}$, respectively, in place of $X^{[4]}+(X+Y)^{[2]}$, $X^{[3]}+(X+Y)$, $X^{[2]}+1$, respectively, in the $Q(0)\circ f$ column of \eqref{Ddualeq}. The stratification of $(A^\ast)^\vee=\bigl(\Gr_{{\maxA}}(A)\bigr)^\vee$ corresponding to the $Q(a)$ stratification of ${A}^\ast$ is obtained by taking leading terms in  
\eqref{Ddualeq}. We have $Q^\vee(1)=0$ and, by definition,
\begin{align}
Q^\vee(0)&=\langle X^{[7]},\,X^{[6]},\,X^{[5]},\,X^{[4]},\,X^{[3]},\,X^{[2]},\,X,\,1\rangle =S\circ X^{[7]},\, \notag\\
Q^\vee(2)&=\langle Y^{[4]}+(X+Y)^{[4]},\,Y^{[3]},\,(X+Y)^{[3]},\,Y^{[2]},\,(X+Y)^{[2]},\,Y\rangle =S\circ  \bigl(Y^{[4]}+(X+Y)^{[4]}\bigr),\notag\\
Q^\vee(3)&=\langle Z^{[3]},\,Z^{[2]},\,Z\rangle=S\circ Z^{[3]}.
\label{dualQ(a)moduleeq}
\end{align}
Each $Q(a)$ is a homogeneous $S$-module (always), and here each $Q(a)$ is a cyclic $S$-module (required in 2 variables, but not in 3 variables, as we shall see below in Section \ref{Hasec}).  From \eqref{dualQ(a)moduleeq} we can read off the Hilbert function decomposition $\mathcal{D}(A^\vee)=\mathcal{D}(A)$ in \eqref{Deq}.\par
We also have
\begin{align*} 
\mathcal{C}(0)&=(y,\,z,\,x^8),\ \mathcal{C}(1)=\mathcal{C}(2)= (z,\,x^2y-xy^2,\,y^4-2x^3y,\,x^8),\\
\mathcal{C}(3)&=I^\ast=( xz,\,yz,\,x^2y-xy^2,\,y^4-2x^3y,\,z^4,\,x^8).
\end{align*}
The representatives of $Q(a)$ are
\begin{align*} 
Q(0)&=\langle 1,\,x,\,x^2,\,x^3,\,x^4,\,x^5,\,x^6,\,x^7\rangle\\
Q(2)&=\langle y,\,xy,\,y^2,\,xy^2,\,y^3,\,y^4\rangle\\
Q(3)&=\langle z,\,z^2,\,z^3\rangle .
\end{align*} 
The action of $Q(a)_i$ on $f$ according to Lemma \ref{Qdualdescription}, Equation  \eqref{Qadualeq} gives vector spaces isomorphic to  $Q^\vee(a)_i$: for example, denoting by $\overline{h}$  the class of $h$ in its respective quotient,
\begin{align*}
Q^\vee(0)_4&\cong{\frac{ (\maxA^{\,3}\circ f)_{\le 4}}{(\maxA^{\,3}\circ f)_{\le 3}+(\maxA^{\,4}\circ f)_{\le 4}}}=\big\langle\overline{X^{[4]}+(X+Y)^{[2]}}\big\rangle\\
Q^\vee(2)_4&\cong {\frac{ (\maxA\circ f)_{\le 4}}{(\maxA\circ f)_{\le 3}+(\maxA^{\,2}\circ f)_{\le 4}}}=\big\langle\overline{Y^{[4]}+(X+Y)^{[4]}}\big\rangle\\
Q^\vee(2)_3&\cong {\frac{ (\maxA^{\,2}\circ f)_{\le 3}}{(\maxA^{\,2}\circ f)_{\le 2}+(\maxA^{\,3}\circ f)_{\le 3}}}=\big\langle\overline{Y^{[3]}},\overline{(X+Y)^{[3]}} \big\rangle.
\end{align*}
As we can see, the representatives above need not be homogeneous, but, taking their leading terms we do get the $Q^\vee(a)_i$ specified in \eqref{dualQ(a)moduleeq}. 
\end{example}
\index{dual generator of A@dual generator of $A$!use to construct given $\mathcal{D}(A)$}%
\begin{example}[\textsc{Using the dual generator}]
\label{varydualex}
The dual generator can often be used to simply define AG algebras having certain given symmetric decompositions.\par\noindent
In Example \ref{stdex} the algebra $B=R/\Ann g$, where $g=X^{[7]}+Y^{[5]}+(X+Y)^{[5]}$, is a  $2$-RCM of $A_0=R/\Ann (X^{[7]})$ over $R={\kk}\{x,y\}$: it  has the maximum possible $H_B(2)=(0,1,2,2,1,0)$ and Hilbert function $H(B)=(1,2,3,3,2,1,1,1)$, given $H_B(0)$, $H_B(1)=0$ and the codimension two of $R$. However, the AG algebra $A$ defined by $f=g+Z^{[4]}$ is not a $3$-RCM of $B$ in $S$, as $H_A(3)=(0,1,1,1,0)$ which is not  $M\bigl(3,D_{\le 2}(B)\bigr)=(0,1,3,1,0)$. \par
The dual generator ${F=g+Z^{[2]}XY}$ (or take $F=g+w_4$, $w_4$ generic) yields a 3-RCM $A'=S/\Ann F$ of $B$, with $H_{A'}=(1,3,6,4,2,1,1,1)$. \par
Taking instead $F'=g+Z^{[4]}+Z^{[2]}X^{[2]}$ we have $H_{F'}(3)=(0,1,2,1,0)$ and the Hilbert function
$H(S/\Ann F')=(1,3,5,4,2,1,1,1)$, intermediate between those for $f$ and $F$.
\end{example}
\subsection{Constructing AG algebras having $H(a)=(0,s,0,\ldots, 0,s,0)$.}\label{Hasec}
F.H.S. Macaulay  showed that an AG algebra of codimension two
is a complete intersection (CI) \cite{Mac0,Mac1}.  The AG algebra structure theorem for these algebras shows that when ${r=2}$ each $Q(a)$ is a cyclic module isomorphic to a (shifted) graded CI, so $Q(a)=h(a) R/\bigl(g_1(a),g_2(a)\bigr)$ (see \cite[\S 2]{I1}). But in codimension three, even for complete intersections, $Q(a)$ may not be cyclic 
\cite[Example 1.6]{I1}. At the time this was surprising to the first author.  We here introduce a simple process to construct such AG quotients of $S={\kk}\{x,y,z\}$ with 
\begin{equation*}
{H(a)=H\bigl(Q(a)\bigr)= (0,1,0,\ldots ,0, 1, 0)}.
\end{equation*} More generally, we construct quotients of the ring ${S={\kk} \{x_1, \ldots, x_r; z_1,\ldots, z_s\} }$, having $H(a)=(0,s,0,\ldots ,0,s,0)$ where $Q(a)$ is not even generated in degree $1$ (Proposition \ref{noncyclic1prop}).
We will later build on this process to construct ``designed'' AG algebras with $H(a), H(2a), \ldots $ non-zero, having certain patterns.\par
We let $R={\kk}\{x_1,\ldots ,x_r\}$, $S={\kk}\{x_1,\ldots ,x_r;z_1,\ldots ,z_s\}$, with ${r\ge 2}$, ${s\ge 1}$; let $\mathfrak{D}={\kk}_{DP}[X_1,\ldots ,X_r]$ and ${\mathfrak{E}={\kk}_{DP}[X_1,\ldots ,X_r,Z_1,\ldots Z_s]}$, where $\{X_i\}$, $\{Z_i\}$ are variables. The following gives the basic construction that underlies the then mysterious  \cite[Example 1.6]{I1}, and is intended to highlight our method. \par
The key idea in this construction is to start with a polynomial ${f\in \mathfrak{D}}$ of degree $j$ such that $\mathfrak{D}_{\le k-1}\subset{R\circ f}$: this is equivalent to $\Ann f\subset \maxR^{\,k}$. We assume that ${\dim_{\kk} (\Ann f)_k\ge s}$. We choose linearly independent elements $h_1,\ldots ,h_s\in \mathfrak{D}_{\le k}$ whose leading  (top degree) terms are linearly disjoint from $(R\circ f)^\ast_k $. Then all partials of order at least one of each $h_i$ are partials of $f$ as well: therefore ${\maxR}\circ \langle h_1,\ldots ,h_s\rangle \subset R\circ f$. The dualizing module $S\circ F$ generated by ${F=f+h_1Z_1+\cdots +h_sZ_s\in \mathfrak{E}}$, includes ${R\circ f}$ and as well $2s$ linearly disjoint new basis elements,  the polynomials $h_1,\ldots ,h_s$ and the variables $Z_1,\ldots , Z_s$. When $f\in\mathfrak{D}$ is also homogeneous, we show that there are no further non-zero $Q(u)$ other than  $Q(0)$ and $Q(a)$, $a=j-(k+1)$ -- except in one special case.
\begin{proposition}
\label{noncyclic1prop}
Let ${f\in \mathfrak{D}}$ be a divided-power polynomial of degree $j$. Suppose  $\Ann_R f\subset\maxR^{\,k} $ but $\Ann_R f\nsubseteq\maxR^{\,k+1}$, for an integer ${k\ge2}$. Let ${s\ge1}$  be an integer such that ${\left(H_{f}\right)_k\le r_k-s}$ and let ${h_1,\ldots,h_s\in \mathfrak{D}_{\le k}\setminus \mathfrak{D}_{\le k-1}}$ be polynomials of degree $k$ whose degree $k$ top-degree forms are linearly independent and linearly disjoint from $(R\circ f)^\ast_k$: that is, we assume that the intersection ${\langle \lt(h_1),\ldots , \lt(h_s)\rangle \cap (R\circ f)^\ast_k=0}$.
Let ${a=j-(k+1)}$ and consider the element
\begin{equation}
{F}=f+\sum_{i=1}^s h_i\cdot Z_i \in \mathfrak{E},
\end{equation}
and let $A=S/\Ann F$.
\begin{enumerate}[(a)]
\item Then  ${Q^\vee_F(a)\cong\langle Z_1,\ldots, Z_s; h_1,\ldots ,h_s \rangle\oplus Q^\vee_f(a)}$ and ${Q^\vee_F(u)\cong Q^\vee_f(u)}$ for ${1\le u<a}$.
\item  Also $Q_F(a)\cong \langle z_1,\ldots, z_s; \phi_1,\ldots ,\phi_s\rangle $ where $\phi_t\in (\Ann_R f)_k$ and $\phi_i\circ h_t=\delta_{i,t}$. Furthermore, $Q_F(a)$ is neither cyclic, nor generated in degree $1$.
\item Assume further that  $f$ is homogeneous (${f=f_j}$). Then ${Q_F(u)=0}$ for ${u\notin\{0,a,2a\}}$ and the Hilbert function of ${A}$ satisfies
\begin{equation}
H_{F}(a)=(0,s,0,\ldots,0,s,0) \text { where } {H_{F}(a)_1=H_{F}(a)_k=s},
\end{equation}
where there are $k-2$ central zeroes, beginning in degree two.\par
Also ${Q_F(u)=0}$ for ${u\notin\{0,a\}}$, except in the special case ${j=2(k-1)}$ and ${Q_F(0)=R/\Ann_R f}$ is compressed Gorenstein.  Then ${0\le H_{F}(2a)_2\le rs}$ and ${H_{F}(2a)_i=0}$ for ${i\not=2}$.  
\end{enumerate}
\end{proposition}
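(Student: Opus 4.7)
The main engine is Lemma~\ref{Qdualdescription} applied with $j_A=j$ and $a=j-(k+1)$, so $j-a=k+1$ and
$$
Q^\vee_F(a)_i=\frac{(\maxA^{\,k+1-i}\circ F)_{\le i}}{(\maxA^{\,k+1-i}\circ F)_{\le i-1}+(\maxA^{\,k+2-i}\circ F)_{\le i}}.
$$
The first mechanical input I would record is the contraction identity: writing $\varphi=\sum_\beta \psi_\beta z^\beta\in S$ with $\psi_\beta\in R$, and using $z_i\circ Z_j=\delta_{ij}$, $z_i^{[2]}\circ Z_i=0$, and $x_u\circ Z_i=0$,
$$
\varphi\circ F=\psi_0\circ f+\sum_{t=1}^s(\psi_0\circ h_t)\,Z_t+\sum_{t=1}^s\psi_{e_t}\circ h_t,
$$
so only the $\psi_0$ and $\psi_{e_t}$ components contribute. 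From $\Ann f\subseteq\maxR^{\,k}$ one obtains $\mathfrak{D}_{<k}\subseteq R\circ f$; the linear disjointness of $\langle\lt(h_1),\ldots,\lt(h_s)\rangle$ from $(R\circ f)^\ast_k$ lets me choose $\tilde\phi_t\in\Ann_R f\cap\maxR^{\,k}$ whose initial forms $\phi_t\in R_k$ satisfy $\phi_t\circ\lt(h_{t'})=\delta_{tt'}$, so $\tilde\phi_t\circ F=Z_t$.

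For part (a) I would compute $Q^\vee_F(a)_i$ degree by degree. In degree $i=1$, the partials $\tilde\phi_t\circ F=Z_t$ place $\langle Z_1,\ldots,Z_s\rangle$ in the numerator; the denominator covers all of $\mathfrak{D}_1$ (through $\maxR^{\,j-1}\circ f$, which surjects onto $\mathfrak{D}_1$ because $\mathfrak{D}_{<k}\subseteq R\circ f$) and carries no $Z$-term ($\maxR^{\,k+1}\circ h_t=0$ by degree), so the quotient is $\langle Z_1,\ldots,Z_s\rangle$, disjoint from $Q^\vee_f(a)_1=0$. In degree $i=k$, the identity $z_t\circ F=h_t$ places $\lt(h_t)$ into the numerator; the remaining $\psi_0\circ f$ contributions reproduce $Q^\vee_f(a)_k$, with the two summands disjoint by hypothesis, while candidate mixed terms $(x_u\circ\lt(h_t))Z_t$ cannot appear as leading forms because they sit inside partials of strictly higher $\mathfrak{E}$-degree coming from $f$ and cannot be cancelled without non-trivial elements of $(\Ann f_j)_1$. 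For intermediate $1<i<k$ only $\psi_0\circ f$ lands in $\mathfrak{E}$-degree $i$, giving $Q^\vee_F(a)_i\cong Q^\vee_f(a)_i$. For $1\le u<a$ a parallel argument with the strictly larger exponent $j-u-i$ annihilates every $h$- and $Z$-contribution, yielding $Q^\vee_F(u)\cong Q^\vee_f(u)$.

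For (b) I would dualize using the exact pairing $Q_F(a)_i\times Q_F(a)_{k+1-i}\to\kk$ of Theorem~\ref{mainoldthm}. Direct verification places $z_t\in\maxA\cap(0:\maxA^{\,k+1})$ (since every $\psi z_t\circ F=\psi\circ h_t$ vanishes for $\psi\in\maxA^{\,k+1}$ by degree, and products of two $z$'s contract $F$ to zero) but not in $(0:\maxA^{\,k})$ (since $\tilde\phi_t z_t\circ F=1$), and $\tilde\phi_t\in\maxA^{\,k}\cap(0:\maxA^{\,2})$ (using $\maxA^{\,2}\circ Z_t=0$) but not in $(0:\maxA)$ (since $z_t\circ Z_t=1$). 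The non-cyclicity and non-generation in degree~$1$ follow from the vanishing $A^\ast_{k-1}\cdot\bar z_t=0$ in $A^\ast_k$: the product $\psi z_t$ for $\psi\in\maxR^{\,k-1}$ satisfies $\psi z_t\circ F=\psi\circ h_t\in\mathfrak{D}_{\le 1}\subseteq R\circ f$, so there exists $\tau_0\in\maxR^{\,j-1}$ with $\tau_0\circ f=\psi\circ h_t$, and then $\psi z_t-\tau_0\in\Ann F$ places $\psi z_t\in\Ann F+\maxA^{\,k+1}$. Hence the degree-$k$ classes $\phi_t$ cannot be reached from $\bar z_1,\ldots,\bar z_s$ by the $A^\ast$-action, so $Q_F(a)$ requires $s$ independent generators in each of degrees $1$ and $k$.

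For (c), $f=f_j$ being homogeneous makes $R/\Ann f$ graded Artinian Gorenstein, so by Lemma~\ref{Watanabelem} $Q^\vee_f(u)=0$ for every $u\ge 1$. Combined with part (a), $Q^\vee_F(a)\cong\langle Z_1,\ldots,Z_s;\lt(h_1),\ldots,\lt(h_s)\rangle$, so reading off the Hilbert function yields $H_F(a)=(0,s,0,\ldots,0,s,0)$ with the asserted interior zeros at positions $2$ through $k-1$. For each $u\notin\{0,a\}$, the same computation shows $Q^\vee_F(u)_i=0$ unless a mixed class built from both $f$- and $h_tZ_t$-contributions survives; the degree structure of such mixed contributions (totally of $\mathfrak{E}$-degree $\le k+1$, with $Z$-bearing terms at degree $\le k$) combined with the symmetric center $(j-u)/2$ forces the unique surviving case to $\mathfrak{E}$-degree $i=2$ with $j-u=4$, equivalently $u=2a$ and $j=2(k-1)$. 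The compressed hypothesis on $R/\Ann f$ then guarantees the candidate class is not absorbed into $(R\circ f)^\ast_2$, and the dimension of $Q_F(2a)_2$ is bounded by $rs$, counting the pairs $(x_u\circ h_t)Z_t$. The main obstacle is exactly this degree-counting argument in the exceptional case $j=2(k-1)$: one must simultaneously track contractions on both the $f$- and $h_t$-sides and invoke the compressed hypothesis to prevent spurious cancellations that would collapse $Q_F(2a)_2$ to zero.
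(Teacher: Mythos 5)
Parts (a) and (b) follow essentially the paper's strategy: compute $Q^\vee_F(u)$ degree by degree through Lemma~\ref{Qdualdescription} and the contraction identity $\varphi\circ F = \varphi_0\circ f + \sum_t(\varphi_0\circ h_t)Z_t + \sum_t\varphi_t\circ h_t$. Your non-cyclicity argument for (b) is a workable variant: the paper checks $\maxA\cdot\bar z_t = 0$ already in degree two, using $(x_jz_t)\circ F = x_j\circ h_t\in R\circ f$, whereas you show $\maxR^{\,k-1}\cdot\bar z_t = 0$ directly in degree $k$ via the relation $\psi z_t - \tau_0\in\Ann F$; either way the degree-$k$ classes $\phi_t$ are unreachable from degree one. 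One caveat: your remark in (a) about mixed terms $(x_u\circ\lt(h_t))Z_t$ invokes ``$(\Ann f_j)_1$'', but $f$ is not assumed homogeneous in parts (a) and (b), and the concern about mixed contributions really belongs to $u>a$, which is part (c).

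The genuine gap is in part (c). You state the right conclusion --- that $Q_F(u)=0$ for $u\notin\{0,a\}$ except possibly $u=2a$ when $j=2(k-1)$ --- but justify it only by asserting that ``the degree structure\ldots combined with the symmetric center'' forces it. The paper's argument is a concrete reduction that you would need to carry out: represent a putative nonzero class of $Q^\vee_F(u)_i$ as $g=\varphi\circ F$ with $\varphi=\varphi_0+\sum_l\varphi_l z_l$; since $u>a$, mod out by $Q^\vee_F(a)\supset\langle z_1,\ldots,z_s\rangle\circ F$ so that each $\varphi_l\in\maxR$; since $f=f_j$ is homogeneous, cancellation between the leading terms of $\varphi_0\circ f$ and $\sum_l\varphi_l\circ h_l$ forces $\varphi_0\in\maxR^{\,j-k+1}$, whence $\deg(\varphi_0\circ h_l)\le 2k-1-j$; and the requirement $\deg(\varphi_0\circ h_l)\ge 1$ (since $Z_l$ is already accounted for in $Q^\vee_F(a)_1$) yields $j\le 2(k-1)$, pinning down the compressed special case with the unique extra $u=2a$ in degree $i=2$ and the bound $H_F(2a)_2\le rs$. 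Separately, your closing sentence misreads the goal: the Proposition allows $H_F(2a)_2=0$, so there is nothing to ``prevent spurious cancellations that would collapse $Q_F(2a)_2$ to zero.''
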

\begin{proof}
By the assumption of linear disjointness of ${\langle \lt(h_1),\ldots \lt(h_s)\rangle}$ from $({R\circ f})^\ast_k$, there are elements ${\phi_t\in (\Ann_R f)_k}$ such that ${\phi_t \circ h_i=\delta_{it}}$. So ${\phi_i \circ F=Z_i}$, and evidently ${z_i\circ F=h_i\notin R\circ f}$.  Since ${\maxR\circ h_i\subset R\circ f}$ we conclude that ${Q^\vee_F(a)=\langle Z_1,\ldots, Z_s; h_1,\ldots , h_s\rangle\oplus Q^\vee_f(a)}$, ${a=j-(k+1)}$. Evidently, $\mathsf{m}_S\cdot z_i=0$ in $Q_F(a)$ since $z_iz_j\in \Ann F$ and $(x_jz_i)\circ F=x_j\circ h_i$ is in $R\circ f$ by assumpition, so it is in $Q_F(0)$. So $Q_F(a)$ is neither cyclic nor generated in degree 1.\par
We now assume ${f=f_j}$ is homogeneous, and consider another ${Q_A(u)}$, ${u\notin\{0,a\}}$, and assume that ${Q_F(u)_i\neq0}$, for some $i$.
Let ${g\in (R\circ {F})_{\le i}}$ have degree $i$ and represent a non-zero element of ${Q_F(u)_i}$. Then since ${z_i^{\,2}\circ {F}=z_iz_t\circ F=0}$, ${1\le i,t\le s}$,   we may choose ${\varphi\in{\kk}[X,Z]},$
\begin{equation}\label{varphieq}
\varphi=\varphi_0+\sum_{l=1}^s\varphi_l z_l,
\end{equation}
with ${\varphi_0,\varphi_1\ldots , \varphi_s\in R={\kk}[X]}$  such that ${ g=\varphi\circ {F}}$. Then
\begin{equation}\label{varphi2eq}  
g=\varphi\circ {F}=\varphi_0\circ f+\sum_{l=1}^s(\varphi_0\circ h_l) Z_l+
\sum_{l=1}^s\varphi_l \circ h_l.
\end{equation}
\vskip 0.2cm\noindent
Since, evidently, ${u>a}$, we may mod out ${g}$ by ${Q_F^\vee(a)\supset \langle z_1,\ldots z_s\rangle \circ F}$, so we may assume ${\varphi_1,\ldots ,\varphi_s\in \maxR}$. Since ${f=f_j}$ we can have cancellation between the leading terms of ${\varphi_0 \circ f}$ and ${\sum_{l=1}^s\varphi_l \circ h_l}$ only if ${\varphi_0\in \maxR^{\,j-(k-1)}}$: then for ${1\le l\le s}$
\begin{equation*}
\varphi_0\circ h_l \subset \mathfrak{D}_{\le \kappa}, \kappa=k-\bigl(j-(k-1)\bigr)=2k-1-j
\end{equation*}
But we already have ${Z_l \in Q_F^\vee(a)_1}$ so to have ${\varphi_0\circ h_l \in Q_F^\vee(u)}$, with ${u\notin \{0,a\}}$ we must have ${\deg \phi_0\circ h_l \ge 1}$ implying ${2k-1-j\ge 1}$ so ${ j\le 2(k-1)}$. This can happen only if $Q_F(0)$ is compressed Gorenstein of even socle degree ${j=2(k-1)}$. In that case ${Q_F^\vee(u)_2}$ may include ${ (\varphi_0\circ h_l)\cdot Z_l}$, whence  ${u=2a}$. Since ${\varphi_0\circ h_l \subset \langle X_1,\ldots , X_r\rangle}$ we have ${0\le H_F(2a)_2\le rs}$ and ${H_F(2a)_i=0}$ for ${i\ne2}$. \vskip 0.2cm
\end{proof}\par\noindent
The following is an example for Proposition \ref{noncyclic1prop}(c) in the special case where $j$ is even and $f=f_j$ defines a compressed algebra  quotient of ${\kk}\{x,y\}$,  $a=1$ and $H(2)\not=0$.
\begin{example}
\label{1201ex}   
Let $S={\kk}\{x,y,z\}$, and $F=X^{[3]}\cdot Y^{[3]}+Z(X^{[4]}+Y^{[4]})\in \mathfrak{E}$. Then $H_F=(1,3,5,4,4,2,1)$ the ideal $I=\Ann F = (z^2,\, xyz,\, x^2z-xy^3,\, y^2z-x^3y,\, x^4-y^4)$, $I^\ast=(z^2,\, xyz,\, x^2z,\, y^2z,\, x^4-y^4,\,xy^4,\,yx^4)$, and
\begin{align}\label{geneq1}
\mathcal{D}_F&=\bigl(H_F(0)=(1,2,3,4,3,2,1),\ H_F(1)= (0,1,0,0,1,0),\ H_F(2)=(0,0,2,0,0)\bigr)      
\end{align}
Here 
\begin{align*}
Q_F^\vee(1)=Q_F(1)\circ F&=\langle x^4\circ F,z\circ F\rangle=\langle
Z,X^{[4]}+Y^{[4]}\rangle,\\
Q_F^\vee(2)=Q_F(2)\circ F&= \langle (-zx+y^3)\circ F, (-zy+x^3)\circ
F\rangle=\langle XZ,YZ\rangle .
\end{align*}
\end{example}
The next example shows that when the portion $f\in \mathfrak{D}$ of $F\in \mathfrak{E}$ is not homogeneous (so $f\not=f_j$) the terms $\varphi_0 \circ f_t$ for $t\in\{ k+1,k+2,\ldots\}$ might cancel with $\varphi_i\circ h_i$. Then we may no longer have $H(u)=0$ for the set $\{u>a,{u \not= 2a\}}$.
\begin{example}
\label{fnothomogex}  
Let ${f=X^{[7]}+Y^{[6]}+U^{[6]}+X^{[2]}Y^{[2]}U^{[2]}}\in \mathfrak{D}={\kk}_{DP}[X,Y,U]$ and take $R={\kk}\{x,y,u\}.$  Then $H_f=(1,3,6,10,6,3,1,1)$. Let 
${s=1}$, and ${h=Y^{[3]}U}\in \mathfrak{D}_4$, so using the notation of Proposition \ref{noncyclic1prop}, $k=4$. Then $a=j-(k+1)=2$.  Taking $\mathfrak{E}={\kk}_{DP}[X,Y,U,Z]$, and $S={\kk}\{x,y,u,z\}$, let
\[{F=X^{[7]}+Y^{[6]}+U^{[6]}+X^{[2]}Y^{[2]}U^{[2]}+Y^{[3]}UZ} \in\mathfrak{E},\]
and set $A=S/\Ann F$.
Then $Q_F^\vee(2)=\langle Z,Y^{[3]}U\rangle$ from ${y^3u\circ F}$, ${z\circ F}$. Also $(-uz+y^3)\circ F=UZ$, giving a partial of order two (for $-uz$) and degree two (of $UZ$), so
$Q_F^\vee(3)=\langle  UZ\rangle$. The Hilbert function is
${H_F=(1,4,7,10,7,3,1,1)}$, and it has the symmetric decomposition  $\mathcal{D}_F$ (recall $a=2$)
\begin{align*}
H_F(0)&=(1,1,1,1,1,1,1,1)\\
H_F(1)&=(0,2,5,9,5,2,0)\\
H_F(2)&=(0,1,0,0,1,0)\\
H_F(3)&=(0,0,1,0,0),
\end{align*}
while ${\mathcal{D}_f=\bigl(H_f(0),H_f(1)\bigr)=\bigl(H_F(0),H_F(1)\bigr)}$. Were $f$ homogeneous and $a=2$, $H(3)\not=0$ could not occur.
\end{example}As mentioned earlier the Proposition \ref{noncyclic1prop} generalizes \cite[Examples 1.6, 4.7]{I1} referred to above for which $H(A)=(1,3,3,4,2,1,1)$. That example had a quite complicated dual generator. Using the Proposition below we can find examples having rather simpler dual generators. In part A of Proposition~\ref{prop1.24} we describe a large subfamily of all AG
algebras having this Hilbert function.\footnote{The example $f=X^{[6]} + X^{[4]}Z + X^{[2]}Y^{[3]} + X^{[2]}Z^{[2]} + 2Y^{[3]}Z + Z^{[3]}$, defining the ideal $I=(xz-x^3,\,yz-2xy^2,\,z^2-x^2z,\,xyz,\,xz^2-xy^3,\, 2z^3-y^3z,\,y^4)$, also has $H(A_f)=(1,3,3,4,2,1,1)$, but is not in the subfamily we describe.} In Part B of the Proposition we show that any AG algebra of this Hilbert function is isomorphic to one in the large subfamily.  Recall that for $L=(aX+bY)$, $L^{[j]}=\sum_{i=0}^j a^ib^{j-i}X^{[i]}Y^{[j-i]}$. We denote by $R={\kk}\{x,y\}$, $\mathfrak{D}={\kk}_{DP}[X,Y]$, $S={\kk}\{x,y,z\}$ and $ \mathfrak{E}={\kk}_{DP}[X,Y,Z]$.
\index{curvilinear algebra}%
\index{parametrizing AG algebras!of given Hilbert function}%
\index{dual generator of A@dual generator of $A$!use to construct given $\mathcal{D}(A)$}%
\index{GorHR@$\Gor_H(R)$!irreducible family}%
\begin{proposition}
\label{prop1.24}
\begin{enumerate}[(a)]
\item All algebras $A=R/\Ann G$ constructed in the following manner are AG algebras of Hilbert function $H(A)=(1,3,3,4,2,1,1)$. Begin with a graded curvilinear Gorenstein quotient $C=R/J$ of Hilbert function $H(C)=(1,1,\ldots ,1)$: then $C$ is determined by a dual generator $f_6=L^{[6]}\in\mathfrak{D}$ for some non-zero linear form $L=aX+bY$. Then choose a relatively compressed modification $f=f_6+f_5+f_4\in \mathfrak{D}$: this requires $f_5$ to be general enough in $\mathfrak{D}_5$. The algebra $B=R/\Ann f$ satisfies  
\begin{equation}\label{HBeq}
\mathcal{D}(B)=\bigl(H_B(0)=(1,1,1,1,1,1,1) \text { and } H_B(1)=(0,1,2,2,1,0)\bigr).
\end{equation}
Now let $F=f+ Zh$ with $f$ given as above and $h\in \mathfrak{D}_{\le 3}$. There is an open dense subset $U_B\subset \mathfrak{D}_3$ such that if the top degree form $h_3\in  U_B$, then the algebra $A'=S/\Ann F$ satisfies
\begin{equation}\label{DAex1.6}
\mathcal{D}(A')=\bigl(H_B(0),H_B(1),H_A(2)=(0,1,0,1,0)\bigr) \text { and } H(A')=(1,3,3,4,2,1,1).
\end{equation}
Also $Q_{A'}^\vee(2)\cong \langle Z,h\rangle$ is non-cyclic.\par
Finally, let $G=F+G_{\le 3}$ where $G_{\le 3}\in \mathfrak{E}$ is arbitrary and set $A=S/\Ann G$.
Then $\mathcal{D}(A)=\mathcal{D}(A')$ and $H(A)=H(A')$.
\item Each AG algebra $A$ with $H(A)=(1,3,3,4,2,1,1)$ is isomorphic to an algebra that may be constructed as in part (A).
\item The AG algebras quotients of $S$ having Hilbert function  $(1,3,3,4,2,1,1)$ form an irreducible family having dimension 28.
\item  The algebra $S/\Ann G_0$ where $ G_0=X^{[6]}+X^{[2]}Y^{[3]}+ZY^{[3]}$ has Hilbert function $H_{G_0}=(1,3,3,4,2,1,1)$ and is defined by the ideal $ I= (xz,\,yz-x^2y,\,z^2,\,y^4,\,xy^3-x^5)$.
\item  An open dense family of AG algebras having Hilbert function  $(1,3,3,4,2,1,1)$ are CI's. 
\end{enumerate}
\end{proposition}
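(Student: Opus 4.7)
I will apply Proposition~\ref{noncyclic1prop} with $R=\kk\{x,y\}$, $j=6$, $s=1$, $k=3$, so $a=j-(k+1)=2$. The curvilinear algebra $C=R/\Ann L^{[6]}$ has $H(C)=(1,1,1,1,1,1,1)$, and Equation~\eqref{Maeq} in codimension~$2$ gives $M(1,H(C))=(0,1,2,2,1,0)$; Proposition~\ref{maxprop} then yields the $1$-RCM $f=L^{[6]}+f_5+f_4$ with $f_5$ general in $\mathfrak{D}_5$ satisfying \eqref{HBeq}. One verifies the hypotheses of Proposition~\ref{noncyclic1prop}: $\Ann f\subset\maxR^{\,3}$, $\Ann f\not\subset\maxR^{\,4}$, and $(H_f)_3=3=r_3-1$. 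Choosing $h\in\mathfrak{D}_{\le 3}$ whose top form $h_3$ lies in the open dense set $U_B\subset\mathfrak{D}_3$ of forms linearly disjoint from $(R\circ f)^\ast_3$, that proposition yields $Q_F^\vee(2)\cong\langle Z,h\rangle$ (since $Q_f^\vee(u)=0$ for $u\ge 2$); the leading-term Hilbert function of this graded subspace of $\mathfrak{E}$, generated in degrees $1$ and $3$ and symmetric about $(j-a)/2=2$, is $H_F(2)=(0,1,0,1,0)$. Summing the three symmetric components recovers $H(A')=(1,3,3,4,2,1,1)$. For the full $G=F+G_{\le 3}$, the fact that $G$ is a $3$-modification of $F$ combined with Corollary~\ref{partialdecompcor} gives $\mathcal{D}_{\le 2}(A)=\mathcal{D}_{\le 2}(A')$; the codimension-$3$ bound $M(3,\mathcal{D}_{<3})=0$ (via $r_1-h_1=0$ in \eqref{Maeq}) combined with Proposition~\ref{maxprop}(a) forces $H_A(u)=0$ for $u\ge 3$, completing~(a).

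\textbf{Plan for part (b).} First I show $\mathcal{D}(A)$ is uniquely determined by $H(A)$. Each $H_A(a)$ is symmetric of support $[0,j-a]$ about $(j-a)/2$ and vanishes in degree $0$ for $a\ge 1$; hence the only contribution in degree~$5$ is $H_A(0)_5=H_A(0)_1$, which forces $H_A(0)_1=1$, and then by Macaulay growth for a graded AG algebra in codimension~$1$ one has $H_A(0)=(1,1,1,1,1,1,1)$. Iteratively solving the remaining degree equations with non-negativity and symmetry constraints pins down $H_A(1)=(0,1,2,2,1,0)$, $H_A(2)=(0,1,0,1,0)$, and $H_A(a)=0$ for $a\ge 3$. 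Since $H_A(0)$ is curvilinear, $G_6=L^{[6]}$ for some linear form $L$, and after a linear change of coordinates in $S$ we assume $L=X$. The main obstacle is bringing $G$ into the construction-(a) form by an $S$-automorphism combined with differential-unit action: I will invoke Theorem~\ref{normalform5thm} (the normal form ruling out exotic summands) to rotate the one-dimensional line $Q_A^\vee(2)_1\subset\mathfrak{E}_1$ to $\langle Z\rangle$, identify a representative of the one-dimensional space $Q_A^\vee(2)_3$ as $h_3\in\mathfrak{D}_3$ modulo $(R\circ f_6)_3$, and absorb residual $Z$-dependence in degrees $\ge 4$ by a sequence of differential-unit modifications. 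The resulting dual generator has the form $G=f+Zh+G'_{\le 3}$ with $f\in\mathfrak{D}_{\le 6}$, $h\in\mathfrak{D}_{\le 3}$, and $G'_{\le 3}\in\mathfrak{E}_{\le 3}$ arbitrary, as required by (a).

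\textbf{Plan for parts (c), (d), and (e).} For (c), a parameter count in construction (a) gives $2+6+5+4+20=37$, corresponding to $L\in\pp(\mathfrak{E}_1)$, $f_5\in\mathfrak{D}_5$, $f_4\in\mathfrak{D}_4$, generic $h_3\in\mathfrak{D}_3$, and $G_{\le 3}\in\mathfrak{E}_{\le 3}$ respectively; subtracting the $9$-dimensional redundancy from change-of-coordinates and differential-unit action used in the reduction of (b) yields $\dim\Gor_H(S)=28$, and irreducibility follows since the parameter space is an open subset of an affine space mapping dominantly to $\Gor_H(S)$. Part (d) is a direct contraction calculation, verifiable in Macaulay2~\cite{GS}: one checks that each listed generator annihilates $G_0=X^{[6]}+X^{[2]}Y^{[3]}+ZY^{[3]}$, and that the associated graded ideal (including the derived element $x^3y$) produces the claimed Hilbert function $(1,3,3,4,2,1,1)$. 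For (e), the condition that $\Ann G$ admits a $3$-element generating set is open on the parameter space, and non-emptiness is established by exhibiting an explicit CI example in the family (obtained by perturbing $G_0$ so that redundant generators collapse); the irreducibility in (c) then promotes openness to open density.
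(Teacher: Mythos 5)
Your proposal for part~(a) follows the paper's proof: Proposition~\ref{maxprop} produces the $1$-RCM $B$ with decomposition~\eqref{HBeq}, Proposition~\ref{noncyclic1prop}(a),(b) applied with $k=3$, $a=2$, $s=1$ gives $Q_F^\vee(2)\cong\langle Z,h\rangle$, and the bound $M(3,\mathcal{D}_{<3})=0$ (the paper's ``$H(A')_1=3$ already'') together with Corollary~\ref{partialdecompcor} kills $H_A(u)$ for $u\ge 3$. Your explicit check of the hypotheses of Proposition~\ref{noncyclic1prop} and your derivation at the start of part~(b) that the decomposition of $H=(1,3,3,4,2,1,1)$ is unique are both sound and supply useful detail the paper only asserts.

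The normalization step in part~(b), however, has a genuine gap. Theorem~\ref{normalform5thm} yields $g_6\in\kk_{DP}[X]$, $g_5\in\kk_{DP}[X,Y]$, $g_4\in\kk_{DP}[X,Y,Z]$ (since $n_0=1$, $n_1=2$, $n_2=3$), so a priori $g_4$ can carry $Z^{[2]}$-summands. Your step ``absorb residual $Z$-dependence in degrees $\ge 4$ by a sequence of differential-unit modifications'' does not work: for a unit $u=1+u_1+u_2+\cdots$ the degree-$4$ change is $(u\circ G - G)_4=u_1\circ g_5+u_2\circ g_6\in\mathfrak{D}$, a sum of partials of $g_5,g_6\in\mathfrak{D}$, so it cannot cancel a $Z^{[2]}$-summand of $g_4$. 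Linearity of $g_4$ in $Z$ is not a consequence of the normal form or of unit action; it is forced by the interior zero $H_A(2)_2=0$ (with $H_A(2)_1=H_A(2)_3=1$), which is exactly the content of Lemma~\ref{linearZlem}, invoked by the paper at precisely this step. Your argument must do the same, or reproduce that lemma's proof, to close part~(b).

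Your count in part~(c) differs from the paper's and is incomplete as stated. The paper counts $2$ parameters for the plane $\mathfrak{D}_1\subset\mathfrak{E}_1$, then $10$ for the family $\Gor_{(1,2,3,3,2,1,1)}(R)$ of height-two ideals (via \cite[Theorem 2.12]{I8}), then fibre dimensions $6$ for $Zh$ and $9$ for $G_{\le 3}$ modulo $R\circ(g_6+g_5+hZ)$, plus $1$ for the exotic degree-$5$ term, arriving at $28$. You count $37$ raw dual-generator parameters and subtract a ``$9$-dimensional redundancy'' attributed to ``change-of-coordinates and differential-unit action,'' but that redundancy is never identified. Since $\Gor_H(S)$ parametrizes ideals, what must be subtracted is the fibre dimension of $G\mapsto\Ann G$ along your $37$-dimensional parameter space; this requires its own computation, and asserting it equals $9$ because $37-28=9$ is circular. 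Parts~(d) and~(e) are handled as in the paper.
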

\begin{proof}[Proof of (a)] 
It is readily seen from the Symmetric Decomposition Theorem \ref{mainoldthm} that the above decomposition $\mathcal{D}$ of Equation  \eqref{DAex1.6} is the unique one possible for $H(A)=(1,3,3,4,2,1,1)$.
The statement about the dual generator for the algebra $C$ is elementary and that about the algebra $B$ in \eqref{HBeq} follows from Proposition \ref{maxprop}.   Taking the $G$ above and $k=3$, $a=2$ in Proposition~\ref{noncyclic1prop} we have that $Q_A(2)$ is non-cyclic  with $H_A(2)=(0,1,0,1,0)$.  Since by Corollary \ref{partialdecompcor} $G_{\le 3}=G_{\le 6-3}$ can only affect $H\bigl(Q(a)\bigr)$ for $a\ge 3$ and $H_A(3)$ must have center of symmetry $3/2$, while $H(A')_1=3$ already we have that $H_A(a)=0$ for $a\ge 3$, in $r=3$ variables, so $\mathcal{D}(A)=\mathcal{D}(A')$ and $H(A)=H(A')$.\vskip 0.2cm\noindent
\textit{Proof of (b)}. Let $A=S/I$, $I=\Ann G$ be an AG algebra of Hilbert function $H=(1,3,3,4,2,1,1)$.  By the Normal Form Theorem (see Theorem 5.3 of \cite{I1} and Theorem \ref{normalform5thm} below) $A$ is isomorphic to an AG algebra $\sigma(A)=S/\Ann G$, $G=g_6+g_5+\cdots $ such that $g_6,g_5\in \mathfrak{D}$ (two variables), determines $B=R/\Ann f$, $f=g_6+g_5$ of decomposition that of \eqref{HBeq}. Since by Macaulay's theorem the first differences of a HF of an AG height two algebra are at most 1, when we regard $f\in S$, we still have that $f$ determines the same $H_B(0)$, $H_B(1)$ from \eqref{HBeq} and $H_B(a)=0$ for $a>1$.  Applying Lemma \ref{linearZlem} below, we have that $G=g_6+g_5+Zh+G_{\le 3}$, with $h\in \mathfrak{D}_3 $ and $G_{\le 3}\in \mathfrak{E}=S^\vee$. Since $G_{\le 3}$ can only influence $H_A(a)$, $a\ge 3$ and each such $H_A(a)$ is zero in embedding dimension three, we have $G_{\le 3}=g_3+g_2\in \mathfrak{E}$ is arbitrary.   \par\noindent
\textit{Proof of (c)}. First assume that $G$ is constructed as in part (A). The CI quotients of $R={\kk}\{x,y\}$ having a given Hilbert function form an irreducible family ${\Gor}_{H_f}(R)$ by \cite{Bri,I8}; then those  $F=f+Zh$ are parametrized by $h$ in an open dense in an affine space
$\mathfrak{D}_3$; to this we can add arbitrary elements in $\mathfrak{E}_{\le 3}$, which mod ${R\circ (f+Zh)}$ are parameters: hence we have an irreducible family parametrizing those AG algebras constructed as in part A. However, when $f_6=X^{[6]}$ then there may be exotic terms of the form $cX^{[4]}Z$ in degree five determining $c^2X^{[2]}Z^{[2]}$ in degree 4, this adds a one-parameter fibre, yielding yet again an irreducible family.\par
We resume this to give a dimension count: choose a $2$-dimensional subspace for $\mathfrak{D}_1$ in $\mathfrak{E}_1=\langle X,Y,Z\rangle$ (this is the choice of an element of projective plane $\mathbb P^2)$: then the CI's of HF $(1,2,3,3,2,1,1)$ form a $10$-dimensional family in $R$ (The dimension formula from \cite[Theorem 2.12]{I8} is $n-\sum(e_i)(e_i+1)/2=n-d-\sum(e_i)(e_i-1)/2$ where $e_i=t_{i-1}-t_{i}$ and $d$ is the order of the defining ideal).  Choosing $hZ$ with $ h$ in an open in $ \mathfrak{D}_3$ gives a 6-dimensional fibre (constant multiple matters). Then adding on $G_{\le 3}\in \mathfrak{E}_{\le 3}$ mod what we already have in ${R\circ (g_6+g_5+hZ)}$ of Hilbert function $(1,3,3,4, \ldots)$ is a $(6-3+10-4)=9$ dimensional fibre. We add one for the exotic term of degree 5:  This gives a $(2+10+6+9+1)=28$ dimensional irreducible family of AG quotients of $S$ having Hilbert function $H(A)$.\par\noindent
\textit{Proof of (d)}. This can be checked by hand.\par\noindent
\textit{Proof of (e)}. By \cite[Example 1.6,4,7]{I1}, there is a CI of this Hilbert function. But being a CI is an open condition on algebras of a fixed Hilbert function (the number of generators of the defining ideal $I$ is semi-continuous). This completes the proof of Proposition \ref{prop1.24}.
\end{proof}\par
\noindent
\textbf{Question.}  What conditions on $f$ assure that for $s=1$ and a generic $h$, the $G$ constructed in Proposition \ref{prop1.24} defines a complete intersection ${S/\Ann G}$, as in the Example 1.6 of \cite{I1}?
\subsection{The associated graded algebra $A^\ast$ does not determine $\mathcal{D}(A)$.}\label{AGA2Hfsec}
\begin{remark}
\label{diffdecomprem} 
Does $A^\ast$ determine the symmetric Hilbert function decomposition $\mathcal{D}(A)$?\par 
\textbf{Yes} when $r=2$.
When ${r=2}$ a stronger result is true: the associated graded algebra $A^*$ of $A$ determines the stratification  $C(a)$, $0\le a\le j-1$ and hence the symmetric decomposition components $Q(a)$. This is shown in the proof of Lemma 2.3, and Theorem 2.6 of \cite{I1}: the latter shows that the graded algebra $A^\ast$ has the least number of generators possible given $H(A^\ast)$, they have different degrees, and the former shows that the unique generator $h_a$ of $A^\ast$ generates $C(a)$. \par
\textbf{No} when $r\ge 3$. When ${r\ge 3}$ the filtration of $A^\ast$ by the ideals $C(a)$ in general contains additional information that is not present in $A^\ast$ itself: that is, two AG algebras $A$, $B$ may have the same associated graded algebra $A^\ast=B^\ast$, but there may be two different Hilbert function decompositions $\mathcal{D}(A)\not= \mathcal{D}(B)$. The first example of this phenomenon was given for embedding dimension  $r=4$  \cite[Example~4]{I5}. There $R={\kk}\{x,y,z,w\}$ and $A^\ast=R/(x^3V,y^2V,z^4V,w^3V)$ where $V=R_1=\langle  x,y,z,w\rangle $.\footnote{The two height four AG algebras $A,B$ having different Hilbert function decompositions but the same associated graded algebra $A^\ast=B^\ast=R/(x^2,y^2,w^3,z^4)V$, from \cite[Example~4]{I5} are $A=R/\Ann F$,  $F=X^{[2]}YZ^{[3]}W^{[2]}+Y^{[2]}Z^{[4]}+X^{[3]}W^{[3]}$ and $B= R/\Ann G$, $G=X^{[2]}YZ^{[3]}W^{[2]}+X^{[3]}Z^{[4]}+Y^{[2]}W^{[3]}$. Here $Q_A(0)=Q_B(0)=R/\Ann X^{[2]}YZ^{[3]}W^{[2]}=R/(x^3,y^2,z^4,w^3)$ and $H_A(1)=0$, $Q_A^\vee(2)=\langle Y^{[2]},X^{[3]},W^{[3]}, Z^{[4]}\rangle$ and $ H_A(2)=(0,0,1,2,1)$. For the other AG algebra $Q_B^\vee(1)=\langle X^{[3]},Z^{[4]}\rangle$, $H_B(1)=(0,0,0,1,1)$ and $Q_B^\vee(3)=\langle Y^{[2]},W^{[3]}\rangle$, $H_B(3)=(0,0,1,1)$. The Hilbert function $H(A)=H(B)=(1,4,10,16,17,14,9,1)$.} We give several codimension three examples below (Examples~\ref{AG=ex} and \ref{simple=AG}). We also give the corresponding $Q^\vee(a)$ decompositions from Lemma \ref{Qdualdescription}.
\index{associated graded algebra!with two symmetric decompositions}%
\end{remark}
\index{symmetric decomposition of Hilbert function!not determined by $A^\ast$ when $r\ge 3$}
\begin{example}[Associated graded algebra with two symmetric decompositions]
\label{AG=ex} 
Let $R={\kk}\{ x,y,z\}$ and $\mathfrak{D}={\kk}_{DP}[X,Y,Z]$. Consider $H=(1,3,3,2,2,1)$. Then 
\begin{equation*}
f=X^{[5]}+Y^{[5]}+(X+Y)^{[4]}+Z^{[2]}\in \mathfrak{D}
\end{equation*}
defines an AG algebra $A=R/I$ with 
\begin{equation*}
I=\Ann f=(xz,\,yz,\,z^2-xy^3,\,x^2y-xy^2,\,xy^2+2xy^3-x^4-y^4)
\end{equation*}
having Hilbert function $H$ and HF decomposition $\mathcal{D}(A)=\mathcal{D}_1$,  where
\begin{equation*}
\mathcal{D}_1=\bigl( H_A(0)=(1,2,2,2,2,1),\, H_A(1)=(0,0,1,0,0),\, H_A(3)=(0,1,0) \bigr)
\end{equation*}
The associated graded algebra $A^\ast =R/I^\ast$ where  
\begin{equation}
I^\ast=(xz,\,yz,\,z^2,\,x^2y,\,xy^2,\,x^5-y^5,\,x^6,\,y^6).
\end{equation}
This can be seen readily by considering the vector space span of the leading (highest degree) terms of the elements of
$R\circ f$ 
\begin{equation*}
(R\circ f)^*=\langle X^{[5]}+Y^{[5]},\,X^{[4]},\,Y^{[4]},\,X^{[3]},\,Y^{[3]},\, X^{[2]},\,XY,\, Y^{[2]},\,X,\,Y,\,Z,\,1\rangle,
\end{equation*}
then calculating the annihilator $I$ in each degree: for example $I_2=\langle X^{[2]},\, XY,\,Y^{[2]}\rangle^\perp\cap R_2=\langle xz,\,yz,\, z^2\rangle$.\par
The dual to the $Q_A(a)$ decomposition of $A^\ast$ satisfies 
\begin{equation*}
Q_A^\vee(0)=R\circ (X^{[5]}+Y^{[5]}),\ Q_A^\vee (1)=\langle  XY\rangle \text { and }Q_A^\vee(3)=\langle Z\rangle.
\end{equation*}
The element $g=X^{[5]}+Y^{[5]}+ZXY\in \mathfrak{D}$  defines an AG algebra $B$ with $J=\Ann g=(xz-y^4,\,yz-x^4,\,z^2,\,x^2y,\,xy^2)$, having the same associated graded algebra $B^\ast=A^\ast$, but here the Hilbert function $H$ has decomposition $\mathcal{D}(B)=\mathcal{D}_2$, where
\begin{equation*}
\mathcal{D}(B)=\bigl( H_B(0)=(1,2,2,2,2,1),\, H_B(2)=(0,1,1,0) \bigr).
\end{equation*}
The dual to the $Q(a)$ decomposition of $B^\ast$ is 
\begin{equation*}
Q_B^\vee(0)=R\circ (X^{[5]}+Y^{[5]}) \text { and } Q_B^\vee (2)=\langle  XY, Z\rangle.
\end{equation*} (See Lemma \ref{Qdualdescription} and Example \ref{stdex} for how these decompositions are constructed). A consequence of the semicontinuity/deformation results of \cite[\S 4.1]{I1} is that no family of AG algebras having decomposition $\mathcal{D}_2$ can have a specialization to an algebra having decomposition $\mathcal{D}_1$.\footnote{This Hilbert function  $H=(1,3,3,2,2,1)$ appeared in \cite[Example 3.13]{I1} without the information that the associated graded algebras were the same for $A$ and $B$. There it was emphasized that (change of notation) $H(B)=H_B(0)+H_B(2)$ is a $2$-RCM of $H_B(0)$, but the decomposition of $H$ is not necessarily unique. Note, \cite[Example 3.13]{I1} has a typo in a generator for the ideal $J$: ``$xy-x^4$'' there should be ``$yz-x^4$''.}
\end{example}
We next give a class of examples of AG algebras $A,B$ in codimension three having the same associated graded algebra, but different Hilbert function decompositions.
\index{associated graded algebra!with two symmetric decompositions}%
\begin{example}
\label{simple=AG}
Let ${k\ge 3}$, ${H=(1,3,3,4,\ldots k,k+1,k,\ldots, 3,2,1_{2k})}$. Let $F=X^{[k+1]}Y^{[k-1]}+ZY^{[k]}$ defining the AG algebra $A=R/I$, $ I=(xz,\,yz-x^{k+1},\,z^2,\,xy^k,\, y^{k+1})$ of Hilbert function $H(A)=H$.  Then the Hilbert function decomposition for $A$ is 
\begin{equation*}
{\mathcal{D}}(A): \bigl( H_A(0)=(1,2,3,\ldots, k-1,k,k_k,k,k-1,\ldots ,3,2,1),\, H_A(k-1)=(0,1,0, \ldots, 0,1_k,0)\bigr).
\end{equation*}
Here $Q_A(k-1)$ is not cyclic, nor generated in degree 1. The duals to $Q_A(a)$ satisfy
\begin{equation*} 
Q_A^\vee(0)=R\circ X^{[k+1]}Y^{[k-1]}, \ Q_A^\vee(k-1)=\langle Z, Y^{[k]}\rangle.
\end{equation*}
Now let $G=X^{[k]}Y^{[k]}+Z^{[2]}$ defining $B=R/J$, $J=(xz,\,yz,\,z^2-x^ky^k,\,x^{k+1},\,y^{k+1})$. Then the Hilbert function decomposition for $B$ is 
\begin{equation*}
{\mathcal{D}}(B):\bigl( H_B(0)=(1,2,3\ldots, k-1,k,(k+1)_k,k,\ldots,2,1),\, H_B(2k-2)=(0,1,0)\bigr),
\end{equation*}
and we have $Q_B^\vee(0)=R\circ X^{[k]}Y^{[k]}$ and $ Q_B^\vee(2k-2)=\langle Z\rangle,$ which is, of course, cyclic.\par
In each case, the associated graded algebra is $A^\ast=B^\ast=R/(xz,\,yz,\,z^2,\,x^{k+1},\,y^{k+1}).$\par
Since $H_A(0)_k=k<H_B(0)_k=k+1$ no AG algebra of decomposition $\mathcal{D}(A)$ can specialize to one of decomposition $\mathcal{D}(B)$.
\end{example}

\subsection{AG algebras whose dual generator is linear in some variables.}\label{restrictsymdecompsec}
For this section we set  $R={\kk}\{x_1,\ldots ,x_r\}$, ${\mathfrak{D}}={\kk}_{DP}[X_1,\ldots ,X_r]$ and $S=R\{z_1,\ldots ,z_s\}$ ${\mathfrak{E}}={\kk}_{DP}[X_1\ldots, X_r; Z_1,\ldots ,Z_s]$.
Our main result here states that when the dual generator $F$ of an AG algebra has the form $F=f+\sum_{i=1}^s Z_ih_i\in \mathfrak{E}$, with $\{f, h_1,\ldots ,h_s\}\subset \mathfrak{D}$  then only certain specified $Q(a)$ can be non-zero, those corresponding to pairwise sums of $j-d_i$ where $d_i=\deg h_i$ (Theorem \ref{DArestrictthm}). This result will be useful in our later construction of AG algebras having ``designed'' symmetric decompositions,  where we specify the set of $a$ for which $Q^\vee (a)\not=0$.  We use it also to restrict the possible symmetric decompositions $\mathcal{D}$ in the proof of Proposition~\ref{prop1.24}.\par
We first show that dual generators having the form $F=f_j+f_{j-1}+\cdots +f_{j-a}+\cdots $ with $f_j,\ldots, f_{j+1-a}\in \mathfrak{D}$ and $f_{j-a}\in \mathfrak{E}$ must have $f_{j-a}$
linear in the variables $Z_1,\ldots, Z_s$, in order for $H_F(a)$ to have interior zeroes.
\index{linearity of dual generator in $Z$}%
\index{H(a) has interior zeroes@$H(a)$ has interior zeroes!and linearity of $f$ in $Z$}%
\index{Q(a)dual@$Q_A^\vee (a)$, dual to $Q_A(a)$}%
\index{dual generator of A@dual generator of $A$!linear in some variables}%
\index{Q(a)decomposition@$Q(a)$ decomposition!which $Q(a)$ can be non-zero!$F=f+\sum Z_ih_i$}%
\begin{lemma}[\textsc{Interior zeroes of $H_A(a)$ and linearity of terms in $F$}]\label{linearZlem} 
Assume that $F=f_j+f_{j-1}+\cdots +f_{j-a}+\cdots$ where $f_i\in \mathfrak{D}$ for $j\ge i>j-a$, and $f_{j-a}\in \mathfrak{E}$, let $A=S/\Ann F$, and suppose that  $H(a)_1=H(a)_{j-a-1}\neq0$ and that $H_A(a)_u =0$ for some $u\in[2,j-a-2]$. Then $f_{j-a}$ is linear in $Z_1,\ldots, Z_s$.
\end{lemma}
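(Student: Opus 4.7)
The plan is to prove the contrapositive: if $f_{j-a}$ has some monomial of $Z$-degree $\ge 2$, then $H_A(a)_u \ne 0$ for every $u \in [1,j-a-1]$, which rules out any interior zero. First I would invoke Corollary~\ref{partialdecompcor} to replace $F$ by $F_{\ge j-a} = f + f_{j-a}$, where $f = f_j + \cdots + f_{j-a+1} \in \mathfrak{D}$ contains no $Z$-variable; this does not affect $H_A(a)$.

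Decompose $f_{j-a} = \sum_{k \ge 0} f_{j-a}^{(k)}$ by $Z$-degree and let $m \ge 2$ be the maximal $k$ with $f_{j-a}^{(k)} \ne 0$. Fix a monomial $X^{[\alpha]}Z^{[\mu]}$ appearing in $f_{j-a}^{(m)}$, so $|\mu|=m$ and $|\alpha|=j-a-m$. For each target degree $u \in [1,j-a-1]$, I would select $q\in[1,m-1]$ with $u \in [q,\,j-a-m+q]$ (these intervals jointly cover $[1,j-a-1]$), then choose multi-indices $\delta \le \mu$ and $\psi \le \alpha$ componentwise with $|\delta|=m-q$ and $|\psi|=j-a-u-m+q$, and set $g_u := x^\psi z^\delta \circ F$. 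Because $|\delta|\ge 1$ and $f \in \mathfrak{D}$ has no $Z$, one has $z^\delta \circ f = 0$, so $g_u = x^\psi z^\delta \circ f_{j-a}$. A direct divided-power calculation shows each surviving summand has total degree exactly $u$, and the fixed term contributes $X^{[\alpha-\psi]} Z^{[\mu-\delta]}$, a monomial of $Z$-degree $q \ge 1$ that cannot cancel against other contributions since distinct monomials of $f_{j-a}^{(m)}$ yield distinct monomials after $x^\psi z^\delta$ acts.

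Next I would verify that $g_u$ represents a nonzero class in $Q_A^\vee(a)_u$ via Equation~\eqref{QAveeDeq}. The operator $x^\psi z^\delta$ has order $j-a-u$ and $g_u$ has degree $u$, placing $g_u$ in the numerator. For the denominator term $(\maxA^{\,j-a-u+1} \circ F)_{\le u}$, any $\phi'' \in \maxA^{\,j-a-u+1}$ gives $\phi''\circ F = \phi''\circ f + \phi''\circ f_{j-a}$, where the first summand lies in $\mathfrak{D}$ (hence is $Z$-free) and the second has degree at most $u-1$. Thus every degree-$u$ contribution coming from a higher-order partial is $Z$-free, whereas $g_u$ has a nonzero $Z$-degree-$q$ part; so $g_u \neq 0$ in $Q_A^\vee(a)_u$. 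This forces $H_A(a)_u \ge 1$ for all $u \in [1,j-a-1]$, contradicting the assumed interior zero, and hence $f_{j-a}$ must be linear in $Z_1,\ldots,Z_s$.

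The main obstacle I anticipate is the verification that the chosen degree-$u$, $Z$-degree-$q$ component of $g_u$ actually survives the full multi-term sum and cannot be reproduced by any higher-order partial. Both points rest on the clean structural observation that $z$-derivatives annihilate the pure-$X$ piece $f$, so every $Z$-bearing contribution to any partial of $F$ must originate from $f_{j-a}$; careful bookkeeping of the bigrading by $X$-degree and $Z$-degree is what keeps the argument tidy.
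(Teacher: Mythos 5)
Your proof is correct and follows essentially the same strategy as the paper's: fix a monomial of maximal $Z$-degree $m\ge 2$ in $f_{j-a}$, apply suitable operators of the form (pure $z$-monomial)$\times$(pure $x$-monomial) of each order $j-a-u$, and observe that the resulting partials carry a $Z$-bearing degree-$u$ component that cannot be matched by any higher-order partial because the pure-$\mathfrak D$ part $f$ is $Z$-free, forcing $Q^\vee(a)_u\ne 0$ for every $u\in[1,j-a-1]$. The paper's proof organizes the sweep over $u$ by first stripping $z$-variables one at a time (degrees $j-a-1$ down to $j-a-d+1$) and then stripping $x$-variables (degrees $j-a-d$ down to $1$), while you keep the $Z$-degree $q$ fixed on each overlapping window $[q,\,j-a-m+q]$ and vary $\psi$; these are interchangeable bookkeeping schemes for the same family of operators. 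Your writeup is more explicit than the paper's in two places it leaves implicit: the injectivity of $x^\psi z^\delta$ on surviving monomials (so the chosen $Z$-degree-$q$ term cannot cancel) and the precise reason the class survives the quotient in \eqref{QAveeDeq} (any $\phi''\in\maxA^{j-a-u+1}$ has $Z$-free degree-$u$ output on $f$ and degree $\le u-1$ output on $f_{j-a}$). One small point you omit, and the paper notes, is that the hypothesis $H(a)_1\ne 0$ is what rules out $f_{j-a}\in\mathfrak D$ entirely, so that ``linear in $Z$'' means degree exactly one and not zero; your contrapositive only establishes $Z$-degree $\le 1$, but this is a cosmetic matter, not a gap in the argument.
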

\begin{proof}
The assumption $H(a)_1\not=0$ and $F=f+f_{j-a}+\cdots\in\mathfrak{E}$
with $f=f_j+\cdots f_{j-a+1}\in \mathfrak{D}$ implies that $f_{j-a}\in
\mathfrak{E}\setminus\mathfrak{D}$. Suppose by way of contradiction that
$f_{j-a}$ has a term $\mu=\mu'\cdot \mu''$ where $\mu'\in
{{\kk}_{DP}}[Z_1,\ldots ,Z_s]$ is non-linear, and $\mu''\in
\mathfrak{D}$, and suppose that $\mu'$ has the maximum possible degree
$d\ge 2$ among such terms.  Write $\mu'=\mu'_1\cdots \mu'_d$ where
$\mu'_i\in \{Z_1,\ldots ,Z_s\}$ and let $z'_i$  be the corresponding
variable among $\{z_1,\ldots ,z_s\}$ and set $z'=z'_1\cdots z'_{d-1}$.
Then, since the top-degree terms of elements in
${Q^\vee(0),\ldots,Q^\vee(a-1)}$ all lie in $\mathfrak{D}$, we have that
\begin{equation}
\label{stringforQ(a)eq}
\begin{aligned}
&z'_1\circ \mu,\, (z'_1z'_2)\circ\mu,\ldots,\, (z'_1z'_2\cdots
z'_{d-1})\circ \mu, \text { and }\\
&(z'R_1)\circ \mu,\, (z'R_2)\circ \mu,\ldots , (z'R_{j-a-d})\circ \mu,
\end{aligned}
\end{equation}
being terms in $\mathfrak{E}\setminus\mathfrak{D}$, occur as non-zero
terms in elements of $Q^\vee(a)$ (the action of $z^\prime$ on
$\mu^\prime$ leaves a degree-one element in the $Z$ variables). Since
the degrees of the terms in the first line of Equation
\eqref{stringforQ(a)eq} range from ${j-a-1}$ to ${j-a-d+1}$ and in the
second line from ${j-a-d}$ to $1$, we get that ${Q(a)_i\ne0}$ for ${1\le
i\le j-a-1}$, meaning that $H(a)$ has no zero gaps. This proves the Lemma.
\end{proof}\par
\vskip 0.2cm
The next theorem studies the more general class of AG algebras whose dual generators are $F=f_j+\cdots$ where
$f_j\in \mathfrak{D}$ and $F_{<j}$ is linear in the variables $Z_1,\ldots, Z_s$. We write ${F=f_j+\sum_{t=1}^s h_t\cdot Z_t}$, and we can say which modules $Q(a)$ may be non-zero. In fact, considering integers ${a_t=j-(\deg h_t+1)}$, we see that each $Q(a_t)$ may be non-zero, thanks to partials of $h_t$, and modules $Q(a_{t_1}+a_{t_2})$ may also be non-zero, thanks to a cancelling between partials of $f_j$ and partials of $h_{t_2}$, yielding new partials of $h_{t_1}$.

\index{H(u) that can be non-zero@$H(u)$ that can be nonzero!for $f$ linear in $Z_1,\ldots,Z_s$}%
\index{dual generator of A@dual generator of $A$!linear in some variables}%
\index{Q(a)decomposition@$Q(a)$ decomposition!which $Q(a)$ can be non-zero!$F=f+\sum Z_ih_i$}
\begin{theorem}[\textsc{Specifying which $H(u)$ can be non-zero, for dual generators linear in some variables}]
\label{DArestrictthm}
Let ${f\in \mathfrak{D}_j}$ be a homogeneous polynomial of degree $j$. Let ${k_1,\ldots,k_s}$ be integers satisfying ${j-2\ge k_1\ge\cdots\ge k_s\ge1}$ and for ${1\le t\le s}$ choose homogeneous polynomials ${h_t\in \mathfrak{D}_{k_t}}$. Let ${a_t=j-(k_t+1)}$ and consider 
\begin{equation*}
{F}=f+\sum_{t=1}^s h_t\cdot Z_t \in \mathfrak{E}.
\end{equation*}
\begin{enumerate}[(a)]
\item \label{DArestrictthmNonZero} Then  $Q(u)=Q^\vee(u)=0$ for ${u\notin\{0,a_1,\ldots,a_s\}\cup\{a_{t_1}+a_{t_2}\mid 1\le t_1\le t_2\le s\}}$. 
\item \label{DArestrictthmInclusions} Moreover, if we set ${C_t=\{\varphi\in R \mid \varphi\circ f\in R\circ \langle h_1,\ldots,h_t\rangle\}}$ (and ${C_0=\Ann_R f}$), and consider the modules
\begin{align*}
B_t&=\frac{R\circ \langle f,h_1,\ldots,h_t\rangle + (\Ann_R \langle f,h_1,\ldots,h_{t-1}\rangle)
\circ (h_t Z_t+\cdots+h_s Z_s)}
{R\circ \langle f,h_1,\ldots,h_{t-1}\rangle + (\Ann_R \langle f,h_1,\ldots,h_t\rangle)
\circ (h_{t+1} Z_{t+1}+\cdots+h_s Z_s)},\\
B_{t_1,t_2}&=\frac{\Bigl(\bigl(C_{t_2}\setminus \Ann_R h_{t_1}\bigr)
\cap\Ann_R \langle h_1,\ldots,h_{t_1-1}\rangle\Bigr)\circ 
(h_{t_1} Z_{t_1}+\cdots+h_s Z_s)}
{\bigl(C_{t_2-1}
+\Ann_R \langle f,h_1,\ldots,h_{t_1-1}\rangle\bigr)
\circ (h_{t_1} Z_{t_1}+\cdots+h_s Z_s)}, 
\end{align*}
the modules $Q^\vee(a_t)$ and ${Q^\vee(a_{t_1}+a_{t_2})}$ satisfy
\begin{equation}
\label{DArestrictlemmodules}
B_t  \subseteq Q^\vee(a_t)\quad\text{and}\quad
B_{t_1,t_2} \subseteq Q^\vee(a_{t_1}+a_{t_2}).
\end{equation}
\item For each ${u>0}$,
\label{DArestrictlemmodulessum}
\begin{equation}
{\textstyle Q^\vee(u)=\bigl(\bigoplus_{a_t=u}B_t\bigr) 
\oplus \bigl(\bigoplus_{a_{t_1}+a_{t_2}=u}B_{t_1,t_2}\bigr)}
\end{equation}
\end{enumerate}
\end{theorem}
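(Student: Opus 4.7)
The backbone of the proof is Lemma~\ref{Qdualdescription}: the class of a partial $\varphi\circ F$ of order exactly $j-u-i$ and leading degree exactly $i$ represents an element of $Q^\vee_F(u)_i$. Since each $z_t$ appears linearly in $F$, we have $z_t z_l\circ F=0$, so modulo $\Ann_S F$ any operator can be written as $\varphi=\varphi_0+\sum_{l=1}^s\varphi_l z_l$ with $\varphi_0,\varphi_l\in R$. This gives the explicit expansion
\[
\varphi\circ F\;=\;\Bigl(\varphi_0\circ f+\sum_l\varphi_l\circ h_l\Bigr)+\sum_l(\varphi_0\circ h_l)\,Z_l,
\]
whose first summand lies in $\mathfrak{D}$ and whose second is $Z$-linear. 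Homogeneity is crucial: since $f$ is homogeneous of degree $j$ and each $h_l$ is homogeneous of degree $k_l$, the partial $\varphi_0\circ f$ has leading degree \emph{exactly} $j-\ord_R(\varphi_0)$ and $\psi\circ h_l$ has leading degree \emph{exactly} $k_l-\ord_R(\psi)$, so cancellations and degree drops can be tracked without ambiguity.

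For part \eqref{DArestrictthmNonZero}, I would do a case analysis on which summand of the expansion supplies the leading term of $\varphi\circ F$. If no cancellation occurs and the leading term comes from $\varphi_0\circ f$, the drop is $0$. If it comes from a $(\varphi_0\circ h_l)Z_l$ term or a $\varphi_l\circ h_l$ summand while $\varphi_0\circ f$ is subdominant, a direct calculation yields drop exactly $a_l$. The delicate case is when the leading $\mathfrak{D}$-contribution from $\varphi_0\circ f$ is cancelled by some $\psi\circ h_{t_1}$ (with $\psi=\varphi_{t_1}$): matching the top degrees forces $\ord_R(\varphi_{t_1})=\ord_R(\varphi_0)-a_{t_1}-1$, so $\ord_S(\varphi)=\ord_R(\varphi_0)-a_{t_1}$; the new leading term is a $(\varphi_0\circ h_{t_2})Z_{t_2}$ at degree $j-a_{t_2}-\ord_R(\varphi_0)$, giving total drop exactly $a_{t_1}+a_{t_2}$. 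Further cancellations use auxiliary $\psi'\circ h_{t'}$ operators of strictly higher order and thus cannot reduce $\ord_S(\varphi)$ any further, ruling out any drop $u\notin\{0,a_t,a_{t_1}+a_{t_2}\}$.

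For parts \eqref{DArestrictthmInclusions} and \eqref{DArestrictlemmodulessum}, I would exhibit explicit representatives. Elements of $B_t$ come from $\varphi_0\in\Ann_R\langle f,h_1,\ldots,h_{t-1}\rangle$ acting on $h_tZ_t+\cdots+h_sZ_s$: the hypothesis kills all contributions to $Q^\vee(0),Q^\vee(a_1),\ldots,Q^\vee(a_{t-1})$, while the denominator of $B_t$ precisely mods out the representatives already realized in $R\circ\langle f,h_1,\ldots,h_{t-1}\rangle$ together with those obtainable once $\varphi_0$ further annihilates $h_t$. For $B_{t_1,t_2}$, the condition $\varphi_0\in C_{t_2}\setminus\Ann_R h_{t_1}$, $\varphi_0\in\Ann_R\langle h_1,\ldots,h_{t_1-1}\rangle$, is exactly what lets us find $\psi$ making $(\varphi_0-\psi z_{t_2})\circ F$ have its $\mathfrak{D}$-part vanish while still retaining a non-trivial $(\varphi_0\circ h_{t_1})Z_{t_1}$ as the new leading term; the denominator removes those $\varphi_0$ realising a smaller drop. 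Additivity and the direct-sum claim in (c) follow from filtering $Q^\vee(u)$ by the minimal $t$ or pair $(t_1,t_2)$ needed to realize a given class, exploiting the ordering $a_1\le\cdots\le a_s$ to pick canonical representatives.

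The hardest step will be the direct-sum assertion in (c): verifying that every class in $Q^\vee(u)$ admits a unique expression as a sum of $B_t$- and $B_{t_1,t_2}$-contributions without overlap. The non-uniqueness of the splitting $\varphi=\varphi_0+\sum\varphi_l z_l$ modulo $\Ann_S F$, together with coincidences of the form $a_t=a_{t'_1}+a_{t'_2}$, mean one must pick representatives successively so that each $\varphi_l$ is responsible for exactly one cancellation step, and show by induction on $t$ that the quotients defining $B_t$ and $B_{t_1,t_2}$ match the images in $Q^\vee(u)$ exactly.
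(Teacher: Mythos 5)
Your proposal takes essentially the same route as the paper: both rest on Lemma \ref{Qdualdescription}, the reduction to $\varphi=\varphi_0+\sum_t\varphi_t z_t$ using $z_tz_l\circ F=0$, homogeneity of $f$ and the $h_t$ to track degrees and orders of partials, and a case analysis on which summand of $\varphi\circ F$ carries the leading term. The one step you leave vague (that iterated cancellations cannot push the order drop past $a_{t_1}+a_{t_2}$, together with the direct-sum claim in (c)) is precisely where the paper introduces its key device of refining $\varphi$ into matched graded slices $\eta(e)=(\varphi_0)_e+\sum_t(\varphi_t)_{k_t-j+e}z_t$, showing each nonzero slice can be discarded or falls cleanly into a single $B_t$ or $B_{t_1,t_2}$; you would need to reconstruct this slicing, or an equivalent, to make your sketch rigorous.
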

\begin{proof}
Recall from Lemma \ref{Qdualdescription} that ${Q^\vee(u)_i}$ is the quotient of partials of $F$ of degree at most $i$ and order at least ${j-(u+i)}$, i.e.\ the set ${\bigl(\maxA^{\,j-(u+i)}\circ F\bigr)_{\le i}}$, by partials of lower degree -- ${\bigl(\maxA^{\,j-(u+i)}\circ F\bigr)_{\le i-1}}$ -- or higher order -- ${\bigl(\maxA^{\,j-(u+i)+1}\circ F\bigr)_{\le i}}$. \vskip 0.2cm\par
Proof of \eqref{DArestrictthmInclusions}. Let $g$ represent a non-zero element in $B_t$, for some ${1\le t\le s}$. Then we can write ${g=g_1+g_2}$, where
\[{g_1\in R\circ \langle f,h_1,\ldots,h_t\rangle} \text{ and } 
{g_2\in (\Ann_R \langle f,h_1,\ldots,h_{t-1}\rangle)
\circ (h_t Z_t+\cdots+h_s Z_s)}.\]
If $g_1$ is non-zero in $B_t$, i.e.\ ${g_1\notin R\circ \langle f,h_1,\ldots,h_{t-1}\rangle}$, we can assume ${g_1\in R\circ h_t}$ and write ${g_1=\beta\circ h_t}$, for some ${\beta\in R}$. Since $h_t$ is homogeneous, we have that the order of $g_1$ as a partial of $h_t$ equals ${\deg h_t-\deg g_1}$, so we can assume that ${\ord\beta=\deg h_t-\deg g_1}$. But we can see that ${g_1=(z_t\beta)\circ F}$, so $g_1$ is a partial of $F$ of order
\[{\ord\beta+1=k_t-\deg g_1 +1=j-a_t-\deg g_1}\]
and therefore ${g_1\in Q^\vee(a_t)}$. Now if $g_2$ is non-zero, we can write ${g_2=\theta\circ (h_t Z_t+\cdots+h_s Z_s)}$, with
\[{\theta\in \Ann_R \langle f,h_1,\ldots,h_{t-1}\rangle \setminus\Ann_R h_t},\]
and $\theta$ can be chosen such that it has the highest possible order. But then ${g_2=\theta\circ F}$ and its order as a partial of $F$ equals ${\ord \theta}$. Since $h_t$ is homogeneous, we get
\[\ord g_2=\ord \theta = k_t-\deg g_2+1=j-a_t-\deg g_2,\]
so ${g_2\in Q^\vee(a_t)}$.

Suppose now that $g$ represents a non-zero element in $B_{t_1,t_2}$, for some ${1\le t_1,t_2\le s}$. Then we can write ${g=\theta\circ(h_{t_1} Z_{t_1}+\cdots+h_s Z_s)}$, with ${\theta\in\bigl(C_{t_2}\setminus \Ann_R h_{t_1}\bigr)\cap\Ann_R \langle f,h_1,\ldots,h_{t_1-1}\rangle}$. So there are ${\eta_1,\ldots, \eta_{t_2}\in R}$ such that ${\theta\circ f=\eta_1\circ h_1+\cdots+\eta_{t_2}\circ h_{t_2}}$. Therefore
\[
\theta\circ F=\eta_1\circ h_1+\cdots+\eta_{t_2}\circ h_{t_2}+g
\]
i.e.\ ${g=(\theta-z_1\eta_1-\cdots-z_{t_2}\eta_{t_2})\circ F}$. Therefore,
\begin{multline*}
\ord g= \ord \eta_{t_2}+1=k_{t_2}-\deg(\theta\circ f)+1=k_{t_2}-(j-\ord\theta)+1\\
=k_{t_2}-(j-k_{t_1}-1+\deg g)+1=j-a_{t_1}-a_{t_2}-\deg g.
\end{multline*}
So ${g\in Q^\vee(a_{t_1}+a_{t_2})}$. 
\vskip 0.2cm\par
Proof of \eqref{DArestrictthmNonZero} and \eqref{DArestrictlemmodulessum}. Fix ${u,i>0}$ such that ${Q^\vee(u)_i\ne 0}$. Then there is a non-zero partial ${g\in R\circ {F}}$ of degree $i$ and order ${j-(u+i)}$. We wish to use arguments on the degree and on the order of $g$ to show that ${u=a_t}$ or ${u=a_{t_1}+a_{t_2}}$, for some $t$, or $t_1$ and $t_2$. Note that if $g'$ is another partial of $F$, also belonging to ${\bigl(\maxA^{\,j-(u+i)}\circ F\bigr)_{\le i}}$, but with ${\ord g'>\ord g}$ or ${\deg g'<\deg g}$, then $g$ and ${g-g'}$ represent the same element in $Q^\vee(u)_i$. We will use this fact several times along the proof.

Let ${\varphi\in S={\kk}[x_1,\ldots,x_r,z_1,\ldots,z_s]}$ be an element of order ${j-(u+i)}$ such that ${g=\varphi\circ {F}}$. Then since ${(z_1,\ldots,z_s)^2\circ {F}=0}$, we may assume that
\begin{equation*}
\varphi=\varphi_0+\sum_{t=1}^s\varphi_tz_t,
\end{equation*}
with ${\varphi_0,\varphi_1\ldots,\varphi_s\in R={\kk}[x_1,\ldots,x_r]}$. Therefore
\begin{equation*}  
g=\varphi\circ {F}=\varphi_0\circ f+\sum_{t=1}^s(\varphi_0\circ h_t)Z_t+
\sum_{t=1}^s\varphi_t\circ h_t
\end{equation*}
and a term of degree $l$ of $g$ is
\begin{equation*}  
g_l=(\varphi_0)_{j-l}\circ f+\sum_{t=1}^s\bigl((\varphi_0)_{k_t-l+1}\circ h_t\bigr)Z_t+
\sum_{t=1}^s(\varphi_t)_{k_t-l}\circ h_t.
\end{equation*}
If ${(\varphi_0)_{j-i}\circ f\ne0}$, consider the partial ${(\varphi_0)_{j-i}\circ F}$, which also has degree $i$, but has order ${j-i}$, greater than ${\ord g=j-(u+i)}$. So this partial satisfies
\[
(\varphi_0)_{j-i}\circ F\in \bigl(\maxA^{\,j-i}\circ F\bigr)_{\le i}
\subseteq  \bigl(\maxA^{\,j-(u+i)+1}\circ F\bigr)_{\le i}
\]
and we see that both partials $g$ and ${g-(\varphi_0)_{j-i}\circ F}$ represent the same class in ${Q^\vee(u)_i}$, so we can replace $g$ by ${g-(\varphi_0)_{j-i}\circ F}$ and thus assume that ${(\varphi_0)_{j-i}\circ f=0}$. Therefore the leading term of $g$ is
\begin{equation*}  
\lt(g)=g_i=\sum_{t=1}^s\bigl((\varphi_0)_{k_t-i+1}\circ h_t\bigr)Z_t+
\sum_{t=1}^s(\varphi_t)_{k_t-i}\circ h_t.
\end{equation*}

We wish to start by looking at these two sums in $g_i$ separately, so we consider the partials 
\[
{g'=\sum_{t=1}^s(\varphi_t)_{k_t-i}z_t\circ F}\quad \text{and} \quad {g''=g-g'=
\bigl({\textstyle \varphi-\sum_{t=1}^s(\varphi_t)_{k_t-i}z_t}\bigr)\circ F}.
\] 
Since both these partials are obtained by eliminating terms of $\varphi$, we see that ${\ord g',\ord g''\ge\ord g}$. On the other hand, they satisfy 
\begin{equation*}
g'_i=g'=\sum_{t=1}^s(\varphi_t)_{k_t-i}\circ h_t, \qquad g''_i=\sum_{t=1}^s\bigl((\varphi_0)_{k_t-i+1}\circ h_t\bigr)Z_t,
\end{equation*}
and ${g''_l=g_l}$ for every ${l\ne i}$. In particular, $g'$ is homogeneous of degree $i$ and $g''$ has degree at most $i$.

We shall distinguish three cases.

\paragraph{Case 1}
Suppose that ${g''_i=0}$ or that ${\ord g''>\ord g}$, i.e.\ we have ${g''\in \bigl(\maxA^{\,j-(u+i)}\circ F\bigr)_{\le i-1}}$ or ${g''\in \bigl(\maxA^{\,j-(u+i)+1}\circ F\bigr)_{\le i}}$. Then both  $g$ and ${g'=g-g''}$ represent the same class in ${Q^\vee(u)_i}$, so we can replace $g$ by ${g'}$ and assume that ${g=\sum_{t=1}^s(\varphi_t)_{k_t-i}\circ h_t}$.

Let $t_1$ be the maximum integer such that ${(\varphi_{t_1})_{k_{t_1}-i}\circ h_{t_1}\ne0}$. Then we can assume that ${\varphi_t=0}$ for ${t>t_1}$, so the order of $\varphi$ is ${\ord\bigl((\varphi_{t_1})_{k_{t_1}-i}\cdot z_{t_1}\bigr)=k_{t_1}-i+1}$, and ${u=a_{t_1}}$. 

Furthermore, we can take $t_0$ to be the minimum integer such that ${k_{t_0}=k_{t_1}}$, so we get ${k_{t_0}=k_{t_0+1}=\cdots=k_{t_1}}$, and we know that the partial ${\sum_{t<t_0}(\varphi_t)_{k_t-i}z_t\circ F}$ has order greater than ${k_{t_1}-i+1}$, so we can again replace $g$ by ${g-\sum_{t<t_0}(\varphi_t)_{k_t-i}z_t\circ F}$. Then it is easy to check that $g$ can be written as an element of ${B_{t_0}\oplus\cdots\oplus B_{t_1}}$.

\paragraph{Case 2}
Suppose that ${g'_i=0}$ or that ${\ord g'>\ord g}$. Then, as before, we can replace $g$ by $g''$ as a representant of their class in $Q^\vee(u)_i$. To know the order of $\varphi$, we will argue on which of its terms we can assume to vanish. To this end, we wish to pay special attention to the action of the following summands of $\varphi$:
\[
\eta(e):=(\varphi_0)_{e}+{\textstyle\sum_{t=1}^s(\varphi_t)_{k_t-j+e}z_t}.
\]

Consider the sets ${T=\big\{t\in\{1,\ldots,s\} \mid (\varphi_0)_{k_{t}-i+1}\circ h_{t}\ne0\big\}}$ and ${E=\{k_{t}-i+1 \mid t\in T \}}$, and let ${e\notin E}$. Let us show that we can assume that ${\eta(e)=0}$. If ${e<j-i}$, we must have
\[
0=g_{j-e}=(\varphi_0)_{e}\circ f+\sum_{t=1}^s\bigl((\varphi_0)_{k_t-j+e+1}\circ h_t\bigr)Z_t+
\sum_{t=1}^s(\varphi_t)_{k_t-j+e}\circ h_t.
\]
In particular, since the middle term is the only one involving the variables ${Z_1,\ldots,Z_s}$, we must have  ${(\varphi_0)_{e}\circ f+\sum_{t=1}^s(\varphi_t)_{k_t-j+e}\circ h_t=0}$. Therefore
\[
\eta(e)\circ F=\bigl((\varphi_0)_{e}+{\textstyle\sum_{t=1}^s(\varphi_t)_{k_t-j+e}z_t}\bigr)\circ F=
\sum_{t=1}^s\bigl((\varphi_0)_{e}\circ h_t\bigr)Z_t.
\] 
We can easily see that this is a polynomial of degree lower than $i$, since any non-zero term of degree ${l>i}$ would imply ${g_l\ne0}$, and if $\bigl((\varphi_0)_{e}\circ h_t\bigr)Z_t$ is non-zero and has degree $i$, for some $t$, we would have ${e=k_t-i+1}$, with ${t\in T}$, a contratdiction. So we can replace $g$ by ${g-\eta(e)\circ F}$ and thus assume that ${\eta(e)=0}$. If ${e=j-i}$, we already have from our prevoious assumtions that ${(\varphi_0)_e=(\varphi_0)_{j-i}=0}$ and for all $t$, ${(\varphi_t)_{k_t-j+e}=(\varphi_t)_{k_t-j}=0}$, i.e.\ ${\eta(j-i)=0}$. If ${e>j-i}$, we get
\[
\eta(e)\circ F=\bigl((\varphi_0)_{e}+{\textstyle\sum_{t=1}^s(\varphi_t)_{k_t-j+e}z_t}\bigr)\circ F=
(\varphi_0)_{e}\circ f+\sum_{t=1}^s\bigl((\varphi_0)_{e}\circ h_t\bigr)Z_t
+\sum_{t=1}^s(\varphi_t)_{k_t-j+e}\circ h_t.
\]
Since this partial has degree lower than $i$, we can again replace $g$ by ${g-\eta(e)\circ F}$ and thus assume that ${\eta(e)=0}$.

At this point, we can write
\[
g=\sum_{e\in E}\eta(e)\circ F=
\sum_{e\in E}\bigl((\varphi_0)_{e}+{\textstyle\sum_{t=1}^s(\varphi_t)_{k_t-j+e}z_t}\bigr)\circ F,
\]
so we have ${\ord g=e=k_{t_1}-i+1}$, or ${\ord g=k_{t_2}-j+e+1=k_{t_2}-j+k_{t_1}-i+2}$, for some ${e\in E}$ and some $t_1$ and $t_2$. In the first case, we get ${u=a_{t}}$, and in the latter we get ${u=a_{t_1}+a_{t_2}}$. 

Looking closer at each summand ${\eta(e)\circ F}$, with ${e\in E}$, we can see that since ${e<j-i}$, the terms ${(\varphi_0)_{e}\circ f}$ and ${\sum_{t=1}^s(\varphi_t)_{k_t-j+e}\circ h_t}$ must cancel. So if ${(\varphi_0)_{e}\circ f=0}$, we can see that 
\[
\eta(e)\circ F=\sum_{t=1}^s\bigl((\varphi_0)_{e}\circ h_t\bigr)Z_t\in B_{t_1},
\]
where $t_1$ is the minimum such that ${(\varphi_0)_{e}\circ h_{t_1}\ne0}$, and therefore ${\ord\eta=k_{t_1}-i+1}$. If ${\ord\eta(e)>\ord g}$, we can replace $g$ by ${g-\eta(e)\circ F}$, as before. If ${\ord\eta(e)=\ord g}$, we can check that ${u=a_{t_1}}$. On the other hand, if ${(\varphi_0)_{e}\circ f\ne0}$, we choose $t_2$ to be the maximum such that ${(\varphi_t)_{k_{t_2}-j+e}\circ h_{t_2}\ne0}$ and $t_1$ again as the minimum such that ${(\varphi_0)_{e}\circ h_{t_1}\ne0}$, and we have
\[
\eta(e)\circ F\in B_{t_1,t_2},
\]
and ${\ord\eta=k_{t_1}+k_{t_2}-j-i+2}$. If ${\ord\eta(e)>\ord g}$, we can once more ignore this partial, and if ${\ord\eta(e)=\ord g}$, we see that ${u=a_{t_1}+a_{t_2}}$.

\paragraph{Case 3}
Suppose that ${g'_i\ne0}$, ${g''_i\ne0}$, and ${\ord g'=\ord g''=\ord g}$. Then, looking at the previous two cases, we have that ${g'\in\bigoplus_{a_t=u}B_t}$ and ${g''\in\bigl(\bigoplus_{a_t=u}B_t\bigr) \oplus \bigl(\bigoplus_{a_{t_1}+a_{t_2}=u}B_{t_1,t_2}\bigr)}$, and this finishes the proof.
\end{proof}

\begin{remark}
In Theorem \ref{DArestrictthm} the modules $B_t$ and $B_{t_1,t_2}$ depend on the choice of variables in ${\langle Z_1,\ldots, Z_s\rangle}$. If ${k_l=\cdots=k_{l+m}}$, for some integers $l$ and $m$, i.e.\ the homogeneous polynomials  ${h_l,\ldots,h_{l+m}}$ are all of the same degree, then a linear change of variables in ${\langle Z_l,\ldots, Z_{l+m}\rangle}$ will yield different modules $B_t$ and $B_{t_1,t_2}$, but the modules $Q(u)$ remain unchanged.

Note also that in general ${B_{t_1,t_2}\ne B_{t_2,t_1}}$, as is the case in the following example.
\end{remark}

\begin{example}
Let ${f=X^{[4]}Y^{[7]}}$, ${h_1=X^{[5]}Y^{[3]}}$, and ${h_2=X^{[6]}+Y^{[6]}}$. If we consider the polynomial 
\[
F=f+h_1Z_1+h_2Z_2=X^{[4]}Y^{[7]} + X^{[5]}Y^{[3]}Z_1 + (X^{[6]}+Y^{[6]})Z_2
\]
the Hilbert function of ${{\kk}[x,y,z_1,z_2]/\Ann F}$ is ${H_F=(1, 4, 6, 7, 6, 6, 7, 6, 5, 3, 2, 1)}$ and has symmetric decomposition $\mathcal{D}_F$:
\begin{align*}
H_F(0)&=(1, 2, 3, 4, 5, 5, 5, 5, 4, 3, 2, 1)\\
H_F(2)&=(0, 1, 1, 1, 1, 1, 1, 1, 1, 0)\\
H_F(4)&=(0, 1, 0, 0, 0, 0, 1, 0)\\
H_F(6)&=(0, 0, 2, 2, 0, 0).
\end{align*}
Note that, using the notation of Theorem \ref{DArestrictthm}, we have ${a_1=2}$ and ${a_2=4}$, so ${a_1+a_2=6}$. Here we have
\begin{align*}
Q^\vee_F(2)&=\langle Z_1,\, YZ_1,\, Y^{[2]}Z_1,\, Y^{[3]}Z_1 + XZ_2,\, X^{[5]},\, X^{[5]}Y,\, 
X^{[5]}Y^{[2]},\, X^{[5]}Y^{[3]} \rangle,\\
Q^\vee_F(4)&=\langle Z_2,\, X^{[6]}+Y^{[6]}\rangle,\\
Q^\vee_F(6)&=\langle XZ_1,\, YZ_2,\, XYZ_1,\, Y^{[2]}Z_2 \rangle.
\end{align*}
Note that ${B_{1,2}=\langle XZ_1,\, XYZ_1 \rangle}$, ${B_{2,1}=\langle YZ_2,\, Y^{[2]}Z_2 \rangle}$, and ${B_{1,1}=B_{2,2}=0}$.
\end{example}

\begin{corollary}
Under the hypotheses of Theorem \ref{DArestrictthm}, we have, for each $t$, ${B_t=0}$ if and only if ${h_t\in R\circ\langle f,h_1,\ldots, h_{t-1} \rangle}$.
\end{corollary}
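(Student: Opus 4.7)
The plan is to verify both implications directly from the definition of $B_t$, using two elementary observations about how $R\circ-$ and $\Ann_R$ behave when we add a partial of already-present generators.

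\textbf{Forward direction.} Assume ${h_t\in R\circ\langle f,h_1,\ldots,h_{t-1}\rangle}$, so write ${h_t=\sum_{i}\psi_i\circ g_i}$ with ${g_i\in\{f,h_1,\ldots,h_{t-1}\}}$ and ${\psi_i\in R}$. I would first note that the numerator piece ${R\circ\langle f,h_1,\ldots,h_t\rangle}$ coincides with ${R\circ\langle f,h_1,\ldots,h_{t-1}\rangle}$: the new generator $h_t$ and all its partials ${R\circ h_t}$ already lie in the earlier module, because contraction commutes. Next, since the $R$-action is commutative (${(\theta\psi_i)\circ g_i=\psi_i\circ(\theta\circ g_i)}$), any ${\theta\in\Ann_R\langle f,h_1,\ldots,h_{t-1}\rangle}$ also annihilates $h_t$; hence ${\Ann_R\langle f,h_1,\ldots,h_{t-1}\rangle=\Ann_R\langle f,h_1,\ldots,h_t\rangle}$, and consequently
\[
(\Ann_R\langle f,h_1,\ldots,h_{t-1}\rangle)\circ(h_tZ_t+\cdots+h_sZ_s)
=(\Ann_R\langle f,h_1,\ldots,h_t\rangle)\circ(h_{t+1}Z_{t+1}+\cdots+h_sZ_s),
\]
because the $h_tZ_t$ summand is killed. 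Comparing the two sums, the numerator and denominator of $B_t$ are equal, so ${B_t=0}$.

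\textbf{Reverse direction.} Assume ${h_t\notin R\circ\langle f,h_1,\ldots,h_{t-1}\rangle}$. I would propose $h_t$ itself as a witness of non-vanishing. Taking $\varphi=1$ in the first summand shows ${h_t=1\circ h_t\in R\circ\langle f,h_1,\ldots,h_t\rangle}$, which lies in the numerator of $B_t$. To see $h_t$ is not in the denominator, split $\mathfrak{E}$ according to degree in the $Z$-variables: the piece ${R\circ\langle f,h_1,\ldots,h_{t-1}\rangle}$ lies entirely in $\mathfrak{D}$ (degree $0$ in $Z$), while ${(\Ann_R\langle f,h_1,\ldots,h_t\rangle)\circ(h_{t+1}Z_{t+1}+\cdots+h_sZ_s)}$ lies in ${\bigoplus_{i>t}\mathfrak{D}\cdot Z_i}$. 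Neither component can contribute to the $Z$-free element $h_t$ unless ${h_t\in R\circ\langle f,h_1,\ldots,h_{t-1}\rangle}$, which is precisely what we are assuming fails. Hence $h_t$ represents a non-zero class in $B_t$.

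There is no real obstacle; the only small point requiring attention is the $Z$-grading argument in the reverse direction, which ensures that the two summands of the denominator cannot conspire to produce an element of pure $Z$-degree zero equal to $h_t$. Both directions are then immediate from the definitions.
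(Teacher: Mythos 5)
Your proof is correct and follows essentially the same route as the paper's: the forward direction rests on the equality $R\circ\langle f,h_1,\ldots,h_t\rangle = R\circ\langle f,h_1,\ldots,h_{t-1}\rangle$ and the resulting equality of annihilators, and the reverse direction exhibits $h_t$ itself as a non-zero class. The only difference is that you spell out the $Z$-grading argument showing $h_t$ avoids the denominator of $B_t$, which the paper leaves implicit; this is a useful clarification but not a different method.
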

\begin{proof}
Suppose that ${h_t\in R\circ\langle f,h_1,\ldots, h_{t-1} \rangle}$. Then clearly 
\[
R\circ\langle f,h_1,\ldots, h_{t-1}, h_{t} \rangle = R\circ\langle f,h_1,\ldots, h_{t-1} \rangle,
\] 
so any element in ${\Ann \langle f,h_1,\ldots,h_{t-1}\rangle}$ is also in ${\Ann h_{t}}$, and we get ${B_t=0}$. On the other hand, if ${h_t\notin R\circ\langle f,h_1,\ldots, h_{t-1} \rangle}$, then $h_t$ itself represents a non-zero element of $B_t$.
\end{proof}

\begin{example}
Under the hypotheses of Theorem \ref{DArestrictthm}, it is possible to have ${Q(a_1+a_2)\ne0}$, even if ${Q(a_1)=Q(a_2)=0}$. Consider ${f=X^{[3]}Y^{[3]}}$, ${h_1=X^{[3]}Y}$, and ${h_2=XY^{[2]}}$. Let
\[
F=f+h_1Z_1+h_2Z_2 = X^{[3]}Y^{[3]} + X^{[3]}YZ_1 + XY^{[2]}Z_2.
\]
The Hilbert function of $A={{\kk}\{x,y,z_1,z_2\}/\Ann F}$ is ${H_A=(1, 4, 5, 4, 3, 2, 1)}$. Here ${a_1=1}$, and ${a_2=2}$ and $H_A$ has symmetric decomposition nonzero for $a=0,2+1$:
\begin{align*}
\mathcal{D}_A: H_F(0)&=(1, 2, 3, 4, 3, 2, 1)\\
H_A(3)&=(0, 2, 2, 0).
\end{align*}
We can easily verify that ${Q^\vee_F(3)=\langle Z_1, Z_2, XZ_1, XZ_2 \rangle}$, so $Q^\vee_F(3)$ is generated in degree one. The reason for the vanishing of ${Q(a_1)}$ and ${Q(a_2)}$ is that $h_1$ and $h_2$ are partials of $f$: this case is related to exotic summands, a subject that we will address in Section \ref{exoticsumsec}.\footnote{The terms $h_1Z_1$ and $h_2Z_2$ are ``exotic summands'': the variables $Z_1,Z_2$ are not seen, as might otherwise be expected, in $Q(1)_1, Q(2)_1$ (Definition \ref{exoticdef}).}
\end{example}

We give now another construction yielding AG algebras with $H_A(a)=(0,0,s,0,\ldots,0,s,0,0)$.
\begin{lemma}
\label{simpledeformlem}
Let ${\mathfrak{D}={\kk}_{DP}[X_1,\ldots ,X_r]}$, ${R={\kk}\{x_1,\ldots ,x_r\}}$, and let ${f\in \mathfrak{D}_j}$ be a homogeneous polynomial of degree $j$ such that the ideal  ${\Ann_R f}$ has order $k$ satisfying $3\le k\le j-3$.  Let ${h\in \mathfrak{D}_{k+1}}$ be a homogeneous polynomial of degree ${k+1}$, such that $h\notin R_{j-(k+1) }\circ f$.  Consider the polynomial
\begin{equation*}
{F}=f+Z^{[j]} + Z\cdot h
\end{equation*}
in ${\mathfrak{E}={\kk}_{DP}[X_1,\ldots ,X_r,Z]}$, and let ${S={\kk}\{x_1,\ldots ,x_r,z\}}$. Then the Hilbert function $H_F$ of ${S/\Ann_{S} F}$ satisfies
$H(u)=0$ for $u\notin\{0,a\}$, where $a=j-k-2$; and we have
\begin{align}
H_F(0)&=H_f(0)+(0,1,1,\ldots,1,1,0),\label{HFCS1eq}\\
H_F(a)&=(0,0,s,0,\ldots,0,s,0,0):\quad H_F(a)_2=H_F(a)_{k}=s,\label{HFHa1eq}
\end{align}
where 
\[
s=\dim_{\kk}\bigl((R\circ h)^\ast_k + (R\circ f)^\ast_k\bigr)/(R\circ f)^\ast_k =\dim_{\kk}\bigl((R_1\circ h) + (R_{j-k}\circ f)\bigr)/(R_{j-k}\circ f).
\]
Furthermore, given $f$ we may choose $h$ so that $s$ has any value satisfying $1\le s \le \min\{r, \dim_{\kk}(\Ann f)_k\}$.
\end{lemma}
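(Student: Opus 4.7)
The approach is to split $F = F_j + Zh$, with top-degree part $F_j = f + Z^{[j]} \in \mathfrak{E}_j$ and deformation term $Zh \in \mathfrak{E}_{k+2} = \mathfrak{E}_{j-a}$. Since $F_{\ge j-a+1} = F_j$ is homogeneous, Corollary \ref{partialdecompcor} and Lemma \ref{Watanabelem} immediately give $\mathcal{D}_{\le a-1}(F) = \mathcal{D}_{\le a-1}(F_j)$ with $H_{F_j}(u) = 0$ for $u \ge 1$, so $H_F(u) = 0$ for $1 \le u \le a-1$ and $H_F(0) = H(S/\Ann_S F_j)$. A direct computation identifies $\Ann_S F_j$ as generated by $\Ann_R f$, $\{x_iz\}_{i=1}^r$, $z^{j+1}$, and the socle relation $g - cz^j$ (where $g \in R_j$ satisfies $g\circ f = c \ne 0$); equivalently $S/\Ann_S F_j$ is the connected sum of $R/\Ann_R f$ with $\kk\{z\}/(z^{j+1})$ along their one-dimensional sockels, and tallying the graded pieces gives $H_F(0) = H_f(0) + (0,1,\ldots,1,0)$.

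To compute $H_F(a)$ I would apply Lemma \ref{Qdualdescription}, for which $j-a-i = k+2-i$. The partials of $F$ are explicit: $\varphi\circ F = \varphi\circ f + Z(\varphi\circ h)$ for $\varphi \in R$; $z\varphi\circ F = \varphi\circ h$ for $\varphi \in \mathfrak{m}_R$; $z\circ F = h + Z^{[j-1]}$; and $z^l\circ F = Z^{[j-l]}$ for $l \ge 2$. The decisive input is that $\ord(\Ann_R f) = k$ together with the graded-Gorenstein symmetry $H_f(i) = H_f(j-i)$ forces $R_{j-i}\circ f = \mathfrak{D}_i$ for every $i \le k-1$. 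Running through the degrees: (a) at $i = 1$ the only candidate order-$(k+1)$ contributions $R_k\circ h \subset \mathfrak{D}_1$ and $(\Ann_R f)_{k+1}\circ h\cdot Z \subset \kk Z$ are absorbed by $R_{j-1}\circ f = \mathfrak{D}_1$ and by $Z = z^{j-1}\circ F$ (at order $j-1 \ge k+2$), so $Q_F^\vee(a)_1 = 0$; (b) for $3 \le i \le k-1$ the $\mathfrak{D}_i$-candidate $R_{k+1-i}\circ h$ lies in $\mathfrak{D}_i = R_{j-i}\circ f$ (in the denominator at order $\ge k+3-i$), while $(\Ann_R f)_{k+2-i} = 0$ kills the $Z\mathfrak{D}_{i-1}$-candidate, giving $Q_F^\vee(a)_i = 0$; (c) at $i = 2$ the contribution $Z\cdot((\Ann_R f)_k\circ h) \subset Z\mathfrak{D}_1$, arising from $\varphi\circ F = Z(\varphi\circ h)$ with $\varphi \in (\Ann_R f)_k$ at order $k$, is disjoint from the denominator, so $Q_F^\vee(a)_2 \cong Z\cdot((\Ann_R f)_k\circ h)$ has dimension $\dim_\kk((\Ann_R f)_k\circ h)$; (d) at $i = k$ the analogous analysis yields $Q_F^\vee(a)_k \cong (R_1\circ h + R_{j-k}\circ f)/(R_{j-k}\circ f)$, which one can alternatively deduce from the symmetry of $H_F(a)$ about $(k+2)/2$ in Theorem \ref{mainoldthm}; and (e) degrees $0, k+1, k+2$ give zero by direct inspection. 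The equality of the two descriptions of $s$ follows from a perfect-pairing computation in $R_k \times \mathfrak{D}_k \to \kk$: the perpendiculars $(R_{j-k}\circ f)^\perp = (\Ann_R f)_k$ and $(R_1\circ h)^\perp = (\Ann_R h)_k$, together with rank-nullity, give both sides as $\dim(\Ann_R f)_k - \dim((\Ann_R f)_k \cap (\Ann_R h)_k)$.

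For the ``Furthermore'' claim, given $1 \le s \le \min\{r, \dim_\kk(\Ann_R f)_k\}$, take generic linearly independent linear forms $L_1,\ldots,L_s \in \mathfrak{D}_1$ and set $h = \sum_{i=1}^s L_i^{[k+1]}$. Since $R_l\circ L^{[k+1]} \subseteq \kk\cdot L^{[k+1-l]}$ for any linear form $L$, we have $(\Ann_R f)_k\circ h \subseteq \langle L_1,\ldots,L_s\rangle$, so the rank is at most $s$; for generic $L_i$ the induced evaluation map $(\Ann_R f)_k \to \kk^s$ is surjective (a dimension count using $\dim(\Ann_R f)_k \ge s$), so $(\Ann_R f)_k\circ h = \langle L_1,\ldots,L_s\rangle$ has dimension exactly $s$. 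The open condition $h \notin R_{j-k-1}\circ f$ holds generically because $R_{j-k-1}\circ f$ is a proper subspace of $\mathfrak{D}_{k+1}$ (it is proper since $(\Ann_R f)_{k+1} \supseteq R_1\cdot(\Ann_R f)_k \ne 0$). The main obstacle will be the middle paragraph: carefully tracking the three components $\mathfrak{D}$, $Z\cdot\mathfrak{D}$, and $\kk[Z]$ of $\mathfrak{E}$ through each quotient in Lemma \ref{Qdualdescription}, and in particular verifying that $h$ itself is never a leading term of any partial of $F$ (though $h + Z^{[j-1]}$ is), so that no unexpected contribution appears in the interior degrees $3 \le i \le k-1$ or in degree $k+1$.
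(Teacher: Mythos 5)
Your argument follows the paper's own route in its essentials: identify $F_j=f+Z^{[j]}$, compute $H_F(0)$ via the connected-sum formula (Lemma \ref{connsumlem}), and then inventory the partials $\varphi\circ F$, $z\varphi\circ F$, $z^l\circ F$ to locate $Q^\vee(a)$. Your degree-by-degree analysis of $Q^\vee(a)_i$ is considerably more explicit than the paper's, which computes only $Q^\vee(a)_k$ from $zR_1\circ F$, deduces $H(a)_2=s$ by symmetry, and then identifies $Q^\vee(a)_2=Z\bigl((\Ann f)_k\circ h\bigr)$; your verifications in degrees $1,3\le i\le k-1, k+1, k+2$ (and the key observation that $R_{j-i}\circ f=\mathfrak{D}_i$ for $i\le k-1$ absorbs all $R_{k+1-i}\circ h$) are correct. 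One genuine departure that works in your favor: you use Corollary \ref{partialdecompcor} together with Lemma \ref{Watanabelem} to dispatch $H_F(u)=0$ for $1\le u\le a-1$ in a single line, whereas the paper folds this in with the final accounting of partials.

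There are two things to address. First, a gap: you never explicitly establish $Q(u)=0$ for $u>a$. Your Corollary \ref{partialdecompcor} argument only covers $1\le u\le a-1$, and the middle paragraph computes only $Q^\vee(a)_i$. The paper closes this with one sentence --- ``since $z^u\circ F=Z^{[j-u]}$ for $u\ge 2$, $z^2\cdot R_{\ge1}\circ F=0$, and we have accounted for $R\circ F$, all other $Q(u)=0$'' --- i.e.\ it turns the inventory of partials into a statement that $S\circ F$ is exhausted by $Q^\vee(0)\oplus Q^\vee(a)$. Your list of partials is the right raw material, but you need to state this conclusion (e.g.: every degree-$i$ nonzero partial of $F$ already appears at order $\ge k+2-i$, so $Q^\vee(u)_i=0$ for all $u>a$). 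Second, a feature rather than a bug: you supply a proof of the ``Furthermore'' clause (realizability of every $1\le s\le\min\{r,\dim_\kk(\Ann f)_k\}$ by $h=\sum_i L_i^{[k+1]}$ with generic linear forms), which the paper's proof omits entirely; your evaluation-map argument for surjectivity of $(\Ann f)_k\to\kk^s$ is correct, with the usual implicit requirement that $\kk$ be infinite for ``generic'' to make sense.
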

\begin{proof}
Since  by choice of $k$ the socle degree term of $F$ is ${F_j=f+Z^{[j]}}$, it is immediate that $H_F(0)$ is the Hilbert function of the connected sum algebra ${S/\Ann F_j}$: the Hilbert function is given by \eqref{HFCS1eq}  (Lemma~\ref{connsumlem} in Section~\ref{connectedsumsec}.) \par
Now since ${zx_i\in \Ann F_j}$, for ${1\le i\le r}$, we see that $zR_1\circ F=R_1\circ h$; but $(R_{j-k}\circ F)_k=  R_{j-k}\circ f $, so $zR_1\circ F$ yields an $s$-dimensional space
supplementing $Q^\vee(0)_{k}$, so it is in $Q(a)_k $ (elements of order $2$ acting on $F$ yield elements of degree $k$ in place of the expected $j-2$, so $a=j-k-2$). This implies that $H(a)_k=s$, and by symmetry of $H(a)$ implies that $H(a)_2=s$. For $u\ge 1$ we have $zR_{1+u}\circ h\subset \mathfrak{D}_{k-u}\subset R_{j-(k-u)}\circ f$ (as order of $\Ann f=k$): thus we have $H(a)_u$ has no further elements arising from $z R\circ F$. The other way to obtain elements of $Q^\vee(a)_u$ is from $(\Ann f)_{k+2-u}\circ Zh\subset Z\mathfrak{D}_{u-1}$. This works for $u=2$ only and yields the $s$-dimensional space 
$Q^\vee(a)_2$.  When $u\ge 3 $ then $(\Ann f)_{k+2-u}=0$ since $\Ann f$ has order $k$; for $u=1$ the image $Z$ is already in $Q(0)$. Thus we have  $Q^\vee (a)_2=Z(\Ann f)_k\circ h$, confirming \eqref{HFHa1eq}.\par
Since ${z^u\circ F=Z^{[j-u]}}$ for $u\ge 2$, since ${z^2\cdot R_{\ge 1}\circ F=0}$, and we have accounted for $R\circ F$, all other $Q(u)=0$ for $u\notin\{0,a\}$.
\end{proof}

\subsection{Partial non-ubiquity in codimension four.}
\label{nonubiquitycod4sec}
We give an example pertaining to caution (h) of Section \ref{cautionsec}, that we term partial non-ubiquity of a symmetric decomposition: here we cannot attain the symmetric Hilbert function  decomposition $\mathcal{D}(A)_{\le 1}$ as the full decomposition $\mathcal{D}(B)$ for any AG algebra $B=R/\Ann G$ satisfying $G=G_j+G_{j-1}$ (Theorem \ref{nonubiq4thm}). This is \emph{partial} non-ubiquity for $A$ as we here restrict the algebras $B$ considered. \par
\index{ubiquity!partial non-ubiquity}%
We let  $R={\kk}\{x,y\}$, ${\mathfrak{D}}={\kk}_{DP}[X,Y]$ and $S={\kk}\{x,y,z,w,\}$, ${\mathfrak{E}}={\kk}_{DP}[X,Y,Z,W]$. We will need the invariant $\tau(V)$ of a vector space of forms $V\subset R_j$ to prove our example.
\begin{definition}
Given a vector space 
$V\subset R_j$ we denote by $R_{-i}V\subset R_{j-i}$ the vector space 
\begin{equation}
R_{-i}V=V:R_i=\{ g\in R_{j-i}\mid R_i\cdot g\subset V\}.
\end{equation}
When $R={\kk}\{x,y\}$ we let 
\begin{align}
\tau (V)&=\dim_{\kk} R_1V-\dim_{\kk}V \notag\\
&=\dim_{\kk}V-\dim_{\kk} R_{-1}V.\label{taueq}
\end{align}
The \emph{ancestor ideal}
$\overline {V}$ of a vector space $V\subset R_j$ of forms is 
\begin{equation}\label{ancestoreq}
\overline {V}=\left(\oplus_{i=1}^j V:R_i\right)\oplus (V).
\end{equation}
\end{definition}
Here the invariant $\tau (V)$ is the number of generators of the ancestor ideal $\overline{V}$ \cite{Ianc}.
For example, if $V=\langle x^4,x^3y,y^4\rangle\subset R_4$, $R={\kk}[x,y]$ we have $\overline{V}=(x^3,y^4)$ and $\tau (V)=2$.\par
We next give the example of an algebra satisfying partial non-ubiquity, which we prove in the succeeding Theorem \ref{nonubiq4thm},
\begin{example}
\label{nonubiq4ex} 
Let $F=X^{[7]}Y^{[7]}+Z\cdot (X+Y)^{[6]}\cdot(X-Y)^{[6]}+W\cdot (X+2Y)^{[6]}\cdot(X-2Y)^{[6]}$ and let $A=S/\Ann F$. Then  we have
\begin{equation}
\label{nonubiq4eq}
\begin{aligned}
H_A(0)&=(1,2,3,4,5,6,7,8,7,6,5,4,3,2,1)\\
H_A(1)&=(0,2,4,6,4,2,0,0,2,4,6,4,2,0)\\
H_A(2)&=(0,0,0,0,0,3,6,3,0,0,0,0,0)\\
H(A)&=(1,4,7,10,9,11,13,11,9,10,11,8,5,2,1)
\end{aligned}
\end{equation}
Here, writing $F=F_{14}+F_{13}=F_{14}+Za+Wb$, in the example $F_{14}$ and the terms $a,b$ of $F_{12}$ are powers of distinct linear forms in $\mathfrak{D}$.
$\tau \bigl( \Ann (F_{14},a,b)\cap R_{12} \bigr)=4$, and also $R/\Ann(F_{14},a,b)$ is compressed, corresponding to the drop $11\to 8\to 5$ in the Hilbert function, in degrees $10$, $11$ and $12$.  Also $R/\Ann (a,b)$ is compressed.
\end{example}
\index{ubiquity!partial non-ubiquity}%
\begin{theorem}[\textsc{Partial non-ubiquity in codimension four}]
\label{nonubiq4thm}
Let  $G=g+g_{13}$, $g\in \mathfrak{D}_{14}$, $g_{13}\in \mathfrak{E}$ define the algebra $B=S/\Ann G$, and assume that $H_B(0)=H_A(0)$, $H_B(1)=H_A(1)$ from \eqref{nonubiq4eq}. Then we have $H_B(2)_6\ge 4.$ 
\end{theorem}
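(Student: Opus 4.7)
The plan is to combine Lemma~\ref{linearZlem} with a cancellation-type dimension count in the spirit of Theorem~\ref{DArestrictthm}, controlled by the $\tau$-invariant of an annihilator subspace. First, apply Lemma~\ref{linearZlem} to $F=G$ with $a=1$: the hypotheses $H_B(1)_1=H_B(1)_{12}=2\neq 0$ together with the interior zeroes $H_B(1)_6=H_B(1)_7=0$ force $g_{13}$ to be linear in the new variables $Z,W$, so one may write $g_{13}=p+Zh_Z+Wh_W$ with $p\in\mathfrak{D}_{13}$ and $h_Z,h_W\in\mathfrak{D}_{12}$. The hypothesis $H_B(0)=H_A(0)=(1,2,\ldots,7,8,7,\ldots,2,1)$ is the unique compressed Gorenstein Hilbert function in two variables of socle degree $14$, so $g\in\mathfrak{D}_{14}$ is generic enough that $(\Ann_R g)_k=0$ for $k\le 7$, $\dim(\Ann_R g)_{12}=10$, and $\dim(\Ann_R g)_{13}=12$.

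Next, introduce the vector space
\[V:=\bigl(\Ann_R\langle g,h_Z,h_W\rangle\bigr)_{12}\subset R_{12}.\]
The condition $H_B(1)_1=2$, read off from the description of $Q^\vee_B(1)_1$ in Lemma~\ref{Qdualdescription}, is equivalent to the surjectivity of the evaluation $(\Ann_R g)_{12}\to\kk^2$, $\varphi\mapsto(\varphi\circ h_Z,\varphi\circ h_W)$, so $\dim V=10-2=8$. The remaining symmetric values $H_B(1)_i=H_A(1)_i$ fix the ranks of the analogous evaluation maps $(\Ann_R g)_{13-i}\to Z\mathfrak{D}_{i-1}\oplus W\mathfrak{D}_{i-1}$ and force $R/\Ann_R\langle g,h_Z,h_W\rangle$ to be compressed in the relevant range, with Hilbert function values $(11,8,5,2,1)$ in degrees $10,\ldots,14$ as indicated in Example~\ref{nonubiq4ex}. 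From this compressed structure one computes $\dim R_1V=\dim(\Ann_R g)_{13}=12$, hence $\tau(V)=\dim R_1V-\dim V=4$.

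Finally, produce classes in $Q^\vee_B(2)_6$ by the cancellation method underlying Theorem~\ref{DArestrictthm}. Since $R_5\circ\langle h_Z,h_W\rangle=\mathfrak{D}_7$ (from the compressed data above), every $\varphi\in R_7$ admits a solution $(\psi_5,\chi_5)\in R_5\oplus R_5$ of $\varphi\circ g+\psi_5\circ h_Z+\chi_5\circ h_W=0$, and then $(\varphi-z\psi_5-w\chi_5)\circ G\in\mathfrak{E}_{\le 6}$ has $(Z,W)$-component $Z(\varphi\circ h_Z)+W(\varphi\circ h_W)\in Z\mathfrak{D}_5\oplus W\mathfrak{D}_5$. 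A direct computation shows that the contribution of order-$\geq 7$ operators to $Q^\vee_B(2)_6$ lies entirely in $\mathfrak{D}_6$, so the $(Z,W)$-component is detected faithfully by the quotient. Matching this cancellation subspace with the $\tau(V)=4$-dimensional space of degree-$12$ generators of the ancestor ideal $\overline{V}$ then gives $\dim Q^\vee_B(2)_6\ge 4$, i.e.\ $H_B(2)_6\ge 4$.

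The main obstacle is this last identification: showing that $\tau(V)$ lower-bounds $\dim Q^\vee_B(2)_6$ uniformly over all $(g,h_Z,h_W,p)$ compatible with the prescribed $H_B(0)$ and $H_B(1)$. This requires tracing how the two-variable ancestor-ideal structure of $V$ governs the effective cancellation data $(\varphi,\psi_5,\chi_5)$ modulo the order-$\geq 7$ contribution, and verifying the rigidity of the count $\tau(V)=4$ — analogous to but more delicate than the generic calculation indicated in Example~\ref{nonubiq4ex}.
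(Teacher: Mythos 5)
Your strategy runs parallel to the paper's at the start---invoke Lemma~\ref{linearZlem} to force $g_{13}$ linear in $Z,W$, then produce classes in $Q^\vee_B(2)_6$ by cancelling $\varphi\circ g$ against partials of $h_Z,h_W$, and convert this into a dimension count via a $\tau$-type invariant. But your choice of space for the $\tau$ count diverges from the paper's, and the gap opens there. The paper works entirely with the two-form annihilator $\Ann_R(a,b)$ (where $a=h_Z$, $b=h_W$): from $H_B(1)$ one reads $\dim\Ann_R(a,b)_{10}=5$, and the two-variable ancestor-ideal growth constraint (the $\tau$ properties from \cite{Ianc}) then forces $\dim\Ann_R(a,b)_5=0$ and $\dim\Ann_R(a,b)_7\le 2$. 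The classes are $L'\circ g_{13}$ for $L'=\kappa^{-1}(R_5\circ\langle a,b\rangle)\subset R_7$, where $\kappa:R_7\to\mathfrak{D}_7$, $\kappa(h)=h\circ g$, is an isomorphism because $R/\Ann g$ is compressed; the kernel of $h\mapsto h\circ g_{13}$ on $R_7$ is exactly $\Ann(a,b)_7$, so one gets at least $\dim L'-\dim\Ann(a,b)_7\ge 6-2=4$ independent classes, and a direct check shows the partials $(w\ell_1+z\ell_2-h'_{\ell_1,\ell_2})\circ G$ have order exactly $6$ and inject into $Q^\vee_B(2)_6$.

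Your space $V=(\Ann_R\langle g,h_Z,h_W\rangle)_{12}$ mixes in $g$, and three assertions you need---that $R/\Ann_R\langle g,h_Z,h_W\rangle$ is compressed with Hilbert values $(11,8,5,2,1)$ in degrees $10,\ldots,14$, that $\tau(V)=4$ (and not merely $\le 4$, which is all that $\dim R_1V\le\dim(\Ann g)_{13}=12$ gives), and that $R_5\circ\langle h_Z,h_W\rangle=\mathfrak{D}_7$ (so that every $\varphi\in R_7$ admits a cancellation)---are stated as ``forced'' by $H_B(0)$, $H_B(1)$ but are not actually deduced from them. In fact the paper's estimate only gives $\dim R_5\circ\langle a,b\rangle\ge 6$, not $=8$, so that last claim is not available in general, and the cancellation must be restricted to the subspace $L'$ rather than carried out for all of $R_7$. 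The final step---matching $\tau(V)$ against $\dim Q^\vee_B(2)_6$---you flag yourself as unresolved; that is precisely the step the paper handles concretely by identifying the kernel of $h\mapsto h\circ g_{13}$ on $R_7$ with $\Ann(a,b)_7$, a space controlled by the two-form (not three-form) annihilator. So the proposal has the right ingredients, but it does not close; the bookkeeping needs to move from $\Ann_R\langle g,h_Z,h_W\rangle$ to $\Ann_R(h_Z,h_W)$, where the growth estimates actually bite.
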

\begin{proof} 
Write $g_{13}=Za+Wb$. The idea is that the high values  of $H_B(1)$ will imply that $\tau \bigl( \Ann (g,a,b)_{12} \bigr)$ is large:  we will show that a key homomorphism whose image is $Q^\vee(2)_6$ has zero kernel so non-zero image.
\begin{enumerate}[i.]
\item First, analogously to the proof of earlier results, it is easy to see that because of the symmetry properties of the decomposition, that $H(B)=H_A(0)+H_A(1)$ implies both $H_B(0)=H_A(0)$ and $ H_B(1)=H_A(1)$. Also, by Lemma \ref{linearZlem} the dual form $G=g+g_{13}$, where
$g_{13}$ is linear in $W,Z$.
\item The linear map $\kappa: R_7\to \mathfrak{D}_7:  \kappa (h)=h\circ g$ is an isomorphism. 
Let  $d=\dim_{\kk} L,$ where $L$ is the vector space 
\begin{equation}
L=R_5\circ  \langle a,b\rangle =\langle zR_5,wR_5\rangle\circ g_{13}\subset  { \mathfrak{D}}_7.
\end{equation}
So for each element $h_{\ell_1,\ell_2}=(w\ell_1+z\ell_2)\circ g_{13}\in  L\subset \mathfrak{D}_7$ with $\ell_1,\ell_2 \in R_5$ there is a unique $h'_{\ell_1,\ell_2}=\kappa^{-1}(h_{\ell_1,\ell_2})\in R_7$ such that $ h'_{\ell_1,\ell_2}\circ g=h_{\ell_1,\ell_2}.$  Furthermore $L'=\kappa^{-1} (L)$ is a $d$-dimensional linear subspace of $R_7$. Then  we have
\begin{equation}
\label{keyeqn3}
(w\ell_1+z\ell_2-h'_{\ell_1,\ell_2})\circ G=-h'_{\ell_1,\ell_2}\circ g_{13}\in \langle Z,W\rangle\cdot {\mathfrak{D}}_5,
\end{equation}
and it follows that $L'\circ g_{13}\subset Q^\vee(2)_6$.\par
\item The kernel of the map: $\alpha: R_7\to R_7\circ g_{13}:   \alpha (h)=h\circ g_{13}\in \langle Z,W\rangle \cdot \mathfrak{D}_5$ satisfies 
\begin{align*}
\ker (\alpha)=\Ann (a,b)_7
\end{align*}
By the assumption on $H_B(1)$ we have that $\dim_{\kk} R_2\circ\langle a,b\rangle = 6$; from the properties of the invariant $\tau$ (\cite{Ianc}) this implies that 
\begin{equation}
\dim_{\kk}R_7\circ \langle a,b\rangle\ge \min\{6, \dim_{\kk}\mathfrak{D}_5\}=6.
\end{equation}
It follows that
\begin{equation}
\begin{aligned}
\dim_{\kk}\ker\alpha&\le 8-6=2 \text { and }\\
\dim_{\kk}L'\circ g_{13}&\ge 6-2=4.
\end{aligned}
\end{equation}
\end{enumerate}
This completes the proof of the Theorem.
\end{proof}\par\noindent
\section{A standard form for the dual generator $F_{{A}}\in \mathfrak{D}$, exotic summands, and modifications.}\label{tail sec}
The question guiding this section is Question 2 of Section \ref{intro1sec}:  Is there a normal or canonical form for the dual generator of an AG algebra $A$, up to isomorphism?  In section \ref{normalformsec} we recall a standard form for the dual generator given in \cite{I1}: a consequence is that $A$ is isomorphic to an algebra whose dual generator $f$ has no  ``exotic summands" (Section \ref{exoticsumsec}). In Section \ref{connectedsumsec} we apply this to a problem of writing certain Artinian Gorenstein algebras as connected sums. In Section \ref{questionsec} we pose some open problems. 
\subsection{Standard form for a dual generator.}\label{normalformsec}
We first discuss the correspondence between algebra isomorphisms of $R$ and the adjoint linear transformation of $\mathfrak{D}$. Then we recall a normal form theorem
from \cite[Theorem 5.3]{I1}, and prove it using the adjoint linear transformation.  Although this result is stated and its proof is outlined in \cite[Theorem 5.3 A, B]{I1}, we include here some further detail and explanation, in particular concerning the adjoint linear map. 
\subsubsection{Adjoint linear map of $\mathfrak{D}$ to an automorphism of $R$.}
We recall the adjoint linear map on $\mathfrak{D}$ corresponding to a given algebra isomorphism $\sigma$ of $R$, from
\cite{Mac1,Em}.   What they term $F\circ g (0)$ is our $g\circ F (0)$,
where $g\circ F(0)$ is the action of $g$ as contraction on $F$.
Let $\sigma $ be a ring automorphism of $R$, and $\xi=\sigma^\vee$ the corresponding adjoint linear map\index{adjoint linear map} on $\mathfrak{D}$.  Then for all $F\in \mathfrak{D}, g\in R$ we define $\xi (F)\in \mathfrak{D}$ by
\begin{equation}\label{equivcontracteq}
\bigl(\sigma (g)\circ \xi (F)\bigr) (0)=(g\circ F) (0).
\end{equation}
\begin{lemma}
\label{adjointlem}
The adjoint linear map $\xi=\sigma^\vee$ on $\mathfrak{D}$ to the automorphism $\sigma$ of $R$ satisfies, for $h\in R$
\begin{equation}\label{adjointcontracteq}
\xi (h\circ F)=\sigma(h)\circ \xi (F).
\end{equation}
and
\begin{equation}\label{adjointboundeq}
\xi : \mathfrak{D}_{\le i} \to  \mathfrak{D}_{\le i}.
\end{equation}
\end{lemma}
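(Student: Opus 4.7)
The plan is to exploit the fact that the contraction pairing $R \times \mathfrak{D} \to \kk$, $(g,F) \mapsto (g \circ F)(0)$, is a perfect (continuous) duality: an element of $\mathfrak{D}$ is determined by its pairings with all $g \in R$. The defining relation \eqref{equivcontracteq} is precisely the statement that $\xi(F)$ is the unique element of $\mathfrak{D}$ whose pairing with $\sigma(g)$ equals that of $F$ with $g$, so both assertions can be reduced to equalities under this pairing.

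For the intertwining identity \eqref{adjointcontracteq}, I would apply \eqref{equivcontracteq} twice. First, with $h \circ F$ in place of $F$: $(\sigma(g) \circ \xi(h \circ F))(0) = (g \circ (h \circ F))(0) = (gh \circ F)(0)$. Second, with $gh$ in place of $g$: $(\sigma(gh) \circ \xi(F))(0) = (gh \circ F)(0)$, and since $\sigma$ is a ring homomorphism this rewrites as $(\sigma(g)\,\sigma(h) \circ \xi(F))(0) = (\sigma(g) \circ (\sigma(h) \circ \xi(F)))(0)$. Letting $g$ range over $R$ and invoking non-degeneracy of the pairing yields $\xi(h \circ F) = \sigma(h) \circ \xi(F)$.

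For the degree bound \eqref{adjointboundeq}, I would first record the standard characterization that $G \in \mathfrak{D}_{\le i}$ if and only if $(g \circ G)(0) = 0$ for every $g \in \maxR^{\,i+1}$; this follows by expanding $G$ in the divided-power monomial basis and observing that $(x^\beta \circ G)(0)$ recovers the coefficient of $X^{[\beta]}$. Given $F \in \mathfrak{D}_{\le i}$ and $g \in \maxR^{\,i+1}$, I would use that $\sigma$, being a local automorphism of the local ring $R$, preserves $\maxR$ and hence its powers, so $g = \sigma(g')$ for some $g' \in \maxR^{\,i+1}$. Then \eqref{equivcontracteq} gives $(g \circ \xi(F))(0) = (\sigma(g') \circ \xi(F))(0) = (g' \circ F)(0) = 0$, whence $\xi(F) \in \mathfrak{D}_{\le i}$.

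There is really no significant obstacle here: the only step worth underlining is the preservation of $\maxR^{\,i+1}$ by $\sigma$, which is automatic for ring automorphisms of a local ring, and for completed local rings over $\kk$ follows from the fact that $\sigma$ must carry non-units to non-units. With that in hand, both parts reduce mechanically to the defining property of the adjoint combined with non-degeneracy of the contraction pairing.
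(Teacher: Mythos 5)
Your proof is correct and follows essentially the same route as the paper's: for \eqref{adjointcontracteq}, a double application of \eqref{equivcontracteq} together with non-degeneracy of the contraction pairing; for \eqref{adjointboundeq}, the observation that $\sigma$ (hence $\sigma^{-1}$) preserves $\maxR^{\,i+1}$, combined with the pairing characterization of $\mathfrak{D}_{\le i}$. The only cosmetic difference is that you spell out that characterization explicitly, whereas the paper uses it implicitly.
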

\begin{proof} 
Let $h\in R$. By definition and \eqref{equivcontracteq} we have, for any ${g\in R}$, 
\begin{multline*}
\bigl(\sigma(g) \circ \xi(h\circ F)\bigr) (0) = \bigl(g \circ (h\circ F)\bigr) (0) 
= \bigl( (g h) \circ F \bigr) (0)\\
= \bigl(\sigma(gh) \circ \xi(F)\bigr) (0) 
= \Bigl(\sigma(g) \circ \bigl(\sigma(h)\circ \xi( F )\bigr)\Bigr) (0)
\end{multline*}
which implies \eqref{adjointcontracteq}. Now by \eqref{equivcontracteq} we have $\bigl(g \circ \xi (F)\bigr)  (0)=\bigl(\sigma^{-1}(g)\circ F\bigr) (0)$.
So
\begin{equation*}
g\in \maxR^{\,i+1} \Rightarrow \sigma^{-1}(g)\in \maxR^{\,i+1} \text { and then } \deg F\le i\Rightarrow  \bigl(\sigma^{-1}(g)\circ F\bigr) (0)=0\Rightarrow \bigl(g\circ \xi (F)\bigr) (0)=0,
\end{equation*}
which implies \eqref{adjointboundeq}.
\end{proof}\par
\begin{example}
\label{adjointex} 
Let $R={\kk}[x,y]$, $\mathfrak{D}={\kk}_{DP}[X,Y]$, take 
\begin{equation}
\sigma: R\to R,\,  \sigma(y)=y,\, \sigma (x)=x-y^2,
\end{equation}
and let $\xi=\sigma^\vee : \mathfrak{D}\to \mathfrak{D}$.
Then we have $\xi (X^{[n]})=X^{[n]}$ and
\begin{equation}
\begin{array}{c||c|c|c|c|c}
F&X&Y&Y^{[2]}&Y^{[3]}&Y^{[4]}\\\hline
\xi (F)&X&Y&Y^{[2]}-X&Y^{[3]}-XY&Y^{[4]}-XY^{[2]}+X^{[2]}.
\end{array}
\end{equation} 
Also 
\begin{equation}
\begin{array}{c||c|c|c}
F&X^{[k]}Y^{[2]}&X^{[k]}Y^{[3]}&X^{[k]}Y^{[4]}\\\hline
\xi (F)&X^{[k]}Y^{[2]}& X^{[k]}Y^{[3]}& X^{[k]}Y^{[4]}- (k+1)X^{[k+1]}Y^{[2]}\\
&- (k+1)X^{[k+1]} & - (k+1)X^{[k+1]}Y & +{\binom{k+2}{2}X^{[k+2]}}.
\end{array}
\end{equation}
We have in general for $\epsilon\in \{0,1\}$,
\begin{equation}\label{adjointexeq}
\xi(X^{[k]}Y^{[2v+\epsilon]})=\sum_{i=0}^v (-1)^i{\tbinom {k+i}{i}}X^{[k+i]}Y^{[2v+\epsilon-2i]}.
\end{equation}
To verify \eqref{adjointexeq} for $\epsilon =0$ we write 
\begin{align*}
x^{k+i}y^{2v-2i}\circ \xi(X^{[k]}Y^{[2v]})(0)&=\sigma (x^{k+i}y^{2v-2i})\circ X^{[k+i]}Y^{[2v-2i]}(0)\\
&=(x-y^2)^{k+i}y^{2v-2i}\circ X^{[k+i]}Y^{[2v-2i]}(0)\\
&=(-1)^i{\tbinom{k+i}{i}},
\end{align*}
and note that all similar evaluations on other terms $x^sy^t$ are zero. The argument for $\epsilon=1$ is similar.
\end{example}

\subsection{Exotic summands.}
\label{exoticsumsec}

Recall from \cite{BJMR} the notion of \emph{exotic summand} of $F$: this
is one where there are terms of $F$ that involve more variables than
might naively be expected given the sequence of embedding codimensons
$\big\{n_a=H\bigl(R/C(a+1)\bigr)_1$ for $ 0\le a\le j-2\big\}$ of the graded
algebras ${A^*/C(a+1)}$.  This notion is due to A. Bernardi and K.
Ranestad, who defined it in a preprint leading up to \cite{BR}. It depends on an appropriate choice of variables for $\mathfrak{D}$. 
\begin{definition}[Exotic summand] 
\label{exoticdef}
Let $f\in \mathfrak{D}$ have degree $j$, let ${A=R/I}$, ${I=\Ann f}$  and let ${\mathcal{D}(A)=\bigl(H(0),H(1),\ldots\bigr)}$. Set $n_f=(n_0,n_1,\ldots )$ where
\begin{equation} 
n_a=\sum_{i=0}^a H(i)_1.
\end{equation}
Note that $n_a$ is the codimension (embedding dimension) of the algebra ${A^*/C(a+1)}$. Fix a basis $X_1,\ldots ,X_r$ for $\mathfrak{D}_1$ such that for each $a$, with ${0\le a\le j-2}$, ${X_{n_{a-1}+1},\ldots ,X_{n_a}}$ are linear partials of
$f$ of order $j-a-1$:  that is, they lie in $\maxA^{\,j-a-1}\circ f$, but not in $\maxA^{\,j-a}\circ f$. Then, writing $f=\sum f_i$ we define an \emph{exotic summand}\index{exotic summand} of degree $j-a$ of $f$ (for this fixed basis $X_1,\ldots ,X_r$) as an element $e_{j-a}\in \mathfrak{D}_{j-a}$ satisfying 
\begin{equation}
e_{j-a}\in \langle  X_{n_a+1},\ldots ,X_r\rangle {\kk}[X_1,\ldots ,X_r],
\end{equation}
and such that
\begin{equation}
f_{j-a}=h_{j-a}+e_{j-a},  \text { with } h_{j-a}\in  {\kk}[X_1,\ldots ,X_{n_a}].
\end{equation}
In short, after settting a basis of dual variables $X_i$ in the order of their appearance in ${C(a+1)^\perp\cong (A^\ast/C(a+1))^\vee}$, a term of degree $j-a$ in the dual generator $f$ of $A$ is exotic if it involves a variable outside of the first $n_a$ variables.
\end{definition}
The presence or absence of exotic summands can be important in issues of parametrization of non-homogeneous AG algebras up to isomorphism. \par
\begin{example}
\label{blindexotics}
\begin{enumerate}[(a)] 
\item\label{blexi} Let ${f=X^{[3]}+XY\in \mathfrak{D}={\kk}_{DP}[X,Y]}$. In this case the monomial $XY$ is exotic because the Hilbert function of ${A=R/\Ann f}$ is ${H=(1,1,1,1)}$, so ${(n_0,n_1)=(1,1)}$: the codimension one of $A$ itself is lower that the number of variables that occur in $XY$. \par
\item\label{blexii} Consider $w=f'_{\ge 4}= X^{[5]} + X^{[3]}Z + X^{[2]}Y^{[2]} + XY^{[3]}+Y^{[4]}$ from Example \ref{genericmod1ex}(b). Here $\mathcal{D}_w=\mathcal{D}_f$ from \eqref{Dfeq}, and $(n_0,n_1,n_2)=(1,2,3)$ but the term $X^{[3]}Z$ of $w_{j-1}$ involves the third variable, not just the first $n_1=2$ variables $X,Y$ that occur. However, defining $w^\prime$ by replacing this term by $XZ^{[3]}$, we would have $\mathcal{D}_{w^\prime}=\bigl(H_{w^\prime}(0)=(1,1,1,1,1,1),\, H_{w^\prime}(1)=(0,2,3,2,0)\bigr)$, so $n_{w^\prime}=(1,3,3\ldots)$ and $w^\prime$ has no exotic terms. Here $H_w=(1,3,3,2,1,1)$, and $H_{w^\prime}=(1,3,4,3,1,1)$. The simpler dual generator $w^{\prime\prime}=X^{[5]} + X^{[3]}Z + XY^{[3]}$ has $\mathcal{D}_{w^{\prime\prime}}=\bigl(H_{w^{\prime\prime}}(0)=(1,1,1,1,1,1),\, H_{w^{\prime\prime}}(1)=(0,2,4,2,0)\bigr)$, with $n_{w^{\prime\prime}}=n_{w^\prime}$ and also no exotic terms.
\end{enumerate}
\end{example}

\begin{remark}
The Example \ref{blindexotics}\eqref{blexi} illustrates the only way that a polynomial of degree three may have an exotic summand. The reason for this is that in a polynomial of degree $j$ we can always use a linear change of variables to write the top degree term $f_j$ in $n_0$ variables (recall that ${n_0=H(0)_1}$). So the highest degree where an exotic summand can occur is ${j-1}$. But this means that in a polynomial of degree three, a quadratic exotic summand involves variables that do not belong to ${\kk}[X_1,\ldots ,X_{n_0}]$. When degree $f=3$, the integer $n_1$ is just the codimension of $A$.

In general, for a polynomial $f$ of degree $j$, if the apparent number of variables is the same as the codimension of ${A=R/\Ann f}$, exotic summands can only occur if ${H(a)_1\ne0}$, for some ${a>1}$.
\end{remark}

\begin{example}
\label{moreexotics}
Consider ${f=X^{[6]}+X^{[4]}Y+X^{[3]}Z+XYZ\in \mathfrak{D}={\kk}_{DP}[X,Y,Z]}$. Then the ring ${A=R/\Ann f}$ has Hilbert function ${H=(1,3,2,2,1,1,1)}$, with ${H(0)=(1,1,1,1,1,1,1)}$, ${H(2)=(0,1,1,1,0)}$, and ${H(4)=(0,1,0)}$, giving ${(n_0,n_1,n_2,n_3,n_4)=(1,1,2,2,3)}$. Note that ${-x^2(y-x^2)\circ f=Y}$, so ${Y\in(x,y,z)^3\circ f}$. We can check that ${Y\notin(x,y,z)^4\circ f}$, therefore $Y$ is a partial of order $3$. Also ${-(z-x^3)\circ f=Z}$ and ${Z\notin(x,y,z)^2\circ f}$, so $Z$ is a partial of order~$1$. Then ${X,Y,Z}$ is a basis for $\mathfrak{D}_1$ satisfying the conditions in Definition \ref{exoticdef}. Here $X^{[4]}Y$ is an exotic summand because it has degree $5$ and involves $Y$, a linear partial of order $3$ (that is, $\{Q^\vee(a)_i, a\le 1\}$ does not involve $Y$ in degree $i$), and $X^{[3]}Z$ and $XYZ$ are both exotic because they involve $Z$ but $\{Q^\vee(a)_i, a\le 3\}$ does not involve $Z$ in degree $i$. For short, these terms of $f$ are exotic as the number of variables involved in
$f_{j-k}$ for $k=(0,1,2,3,4)$ is $(1,2,3,3,3)$ corresponding to $\bigl(X,(X,Y),(X,Y,Z),(X,Y,Z),(X,Y,Z)\bigr)$: but the variable $Y$ appears late in $Q^\vee(2)$, not in $Q^\vee(1)$ as expected when $n_1=1$, and the variable $Z$ appears late in $Q^\vee(4)$ ($n_2=n_3=2)$.\par

In \cite{BJMR}, the authors give a description of how exotic summands occur, showing that they arise from ``attaching'' a partial of a polynomial to a new variable. In this case, we can start with ${g=X^{[6]}}$ and consider the element $x^2$ adding ${(x^2\circ g)\cdot Y=X^{[4]}Y}$ to $g$, to obtain ${h=X^{[6]}+X^{[4]}Y}$. Next, to make a new exotic summand, we consider the element $x^3$ and add ${(x^3\circ h)\cdot Z=(X^{[3]}+XY)Z}$ to the polynomial $h$, obtaining $f$. 
\end{example}

\subsubsection{Exotic summands can be removed, up to isomorphism.}
\index{dual generator of A@dual generator of $A$!removing exotic terms}%
\index{exotic summand!removing}%

The first statement of Theorem \ref{normalform5thm} is Theorem 5.3 from \cite{I1}. The second statement, concerning the absence of exotic summands after a suitable change of variables, is an immediate consequence of the first, as was pointed out by  J. Jelisiejew, who asked us to confirm his reading of Theorem~5.3 in \cite{I1}.
We give a more explicit rendition of the proof of \cite[Theorem 5.3]{I1} that was sketched there. Then we give an example.
Here $R={\kk}\{x_1,\ldots , x_r\}$, $\mathfrak{D}={\kk}_{DP}[X_1,\ldots ,X_r]$.
\begin{theorem}[\textsc{Normal form for dual generator: removing exotic summands}]
\label{normalform5thm} 
Let $A$ be an AG quotient ${A=R/I}$, ${I=\Ann f}$, ${f=f_j+f_{j-1}+\cdots} $, and define $n_a$ as above. Then there is a change of variables ${\sigma \in \Aut (R)}$, ${ \sigma (w_i)=x_i}$,  such that under the corresponding adjoint linear map $\xi$ on $\mathfrak{D}$, the image ${g=\xi (f)}$ satisfies
\begin{equation}\label{exotic2eq}
g_{j-a}\in {\kk}_{DP}[X_1,\ldots  X_{n_a}].
\end{equation}
The algebra ${A'=R/\Ann g}$ is isomorphic to $A$ and $g$ has no exotic summands with respect to the basis ${X_1,\ldots ,X_r}$ for $\mathfrak{D}_1$.  Also,  ${\mathcal{D}(A)_{\le a}= \mathcal{D} \bigl(B(a)\bigr)_{\le a}}$ for the Artinian algebra\linebreak ${B(a)={\kk}\{x_1,\ldots ,x_{n_a}\}/J(a)}$ where ${J(a)=\Ann(g_j+\cdots +g_{j-a})\cap {\kk}_{DP}[X_1,\ldots ,X_{n_a}]}$.
\end{theorem}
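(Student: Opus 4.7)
\smallskip\noindent\textbf{Proof plan.} The plan is to argue by induction on $a$, building up $\sigma$ as a composition $\sigma = \sigma_0\circ\sigma_1\circ\cdots\circ\sigma_{j-2}$ of automorphisms, where $\sigma_a$ is an automorphism whose induced adjoint $\xi_a$ on $\mathfrak{D}$ removes the exotic part of the degree-$(j-a)$ term without disturbing the already-normalized top degrees. For the base case $a=0$, the form $f_j$ is a dual generator of the graded AG algebra $Q_A(0)=R/\Ann f_j$, whose embedding dimension is $H_A(0)_1=n_0$; hence a linear automorphism $\sigma_0$ of $R$ places $\xi_0(f)_j=f_j$ into $\kk_{DP}[X_1,\ldots,X_{n_0}]$ and simultaneously chooses a basis $X_1,\ldots,X_r$ of $\mathfrak{D}_1$ adapted to the order filtration of $\maxR^{\,s}\circ f$ as in Definition~\ref{exoticdef}.

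For the inductive step, suppose $f^{(a-1)}=\xi_{a-1}\circ\cdots\circ\xi_0(f)$ satisfies $f^{(a-1)}_{j-b}\in\kk_{DP}[X_1,\ldots,X_{n_b}]$ for every $b<a$. Write $f^{(a-1)}_{j-a}=h_{j-a}+e_{j-a}$, with $h_{j-a}\in\kk_{DP}[X_1,\ldots,X_{n_a}]$ and $e_{j-a}$ involving some variable $X_\ell$ with $\ell>n_a$. By the choice of basis, each such $X_\ell$ is a linear partial of $f^{(a-1)}$ of order strictly less than $j-a-1$, so there exists $\varphi_\ell\in\maxR$ with $\ord\varphi_\ell<j-a-1$ and $\varphi_\ell\circ f^{(a-1)}=X_\ell$ modulo lower-degree terms. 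Each monomial term of $e_{j-a}$ has the form $X_\ell\cdot X_{m_1}^{[k_1]}\cdots X_{m_t}^{[k_t]}$ with $k_1+\cdots+k_t=j-a-1$, and contracting by $\varphi_\ell\cdot x_{m_1}^{k_1}\cdots x_{m_t}^{k_t}$ recovers $1$ at the top, so this operator has order $\ord\varphi_\ell + (j-a-1)<2(j-a)-2$. The plan is to define $\sigma_a$ by $\sigma_a(x_i)=x_i+\psi_i$, where each $\psi_i$ is a polynomial of order at least $j-a$ chosen to cancel each exotic term of $e_{j-a}$ via the transport identity $\xi_a(h\circ F)=\sigma_a(h)\circ\xi_a(F)$ of Lemma~\ref{adjointlem}. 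Since $\psi_i$ has order $\ge j-a$, the adjoint $\xi_a$ is the identity modulo $\mathfrak{D}_{\le j-a-1}$, so the previously normalized terms $f^{(a-1)}_{j},\ldots,f^{(a-1)}_{j-a+1}$ are preserved.

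The main obstacle is to verify that the system of equations for the $\psi_i$ is solvable in the divided-power setting. This amounts to showing that the $\kk$-linear map from tuples $(\psi_i)$ to corrections in degree $j-a$ surjects onto the subspace of exotic summands; equivalently, that every exotic term of $e_{j-a}$ can be written as $\varphi\circ f^{(a-1)}$ for some $\varphi\in\maxR$ of appropriate order. This follows from the fact that the $X_\ell$'s with $\ell>n_a$ are linear partials of $f^{(a-1)}$: explicitly, for a term $X_\ell\cdot\mu$ of $e_{j-a}$ with $\mu\in\kk_{DP}[X_1,\ldots,X_r]_{j-a-1}$, one picks $\varphi_\ell$ as above and solves inductively on the internal degree $j-a-1-\deg\mu$, using that $\varphi_\ell\cdot x^\alpha\circ f^{(a-1)}$ contributes the required leading monomial modulo already-solved corrections. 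The solvability rests on the dimension bookkeeping already encoded in the definition of $n_a$.

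Finally, once all $\sigma_a$ have been produced and $g=\xi(f)$ is exotic-free, the claim $\mathcal{D}(A)_{\le a}=\mathcal{D}(B(a))_{\le a}$ follows immediately from Corollary~\ref{partialdecompcor} and Principle~\ref{principlefj-a}: the partial $g_j+\cdots+g_{j-a}$ involves only $X_1,\ldots,X_{n_a}$, so its annihilator in $\kk\{x_1,\ldots,x_{n_a}\}$ defines an Artinian quotient $B(a)$ whose $\mathcal{D}_{\le a}$ coincides with that of $R/\Ann(g_j+\cdots+g_{j-a})$, which in turn equals $\mathcal{D}(A)_{\le a}$. The isomorphism $A\cong A'=R/\Ann g$ is clear since $\sigma\in\Aut(R)$ and $\Ann g=\sigma(\Ann f)$ under the identification $\xi=\sigma^\vee$.
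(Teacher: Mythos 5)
There is a genuine gap in your inductive step, and it lies in the claimed order of the correctors $\psi_i$. You take $\sigma_a(x_i)=x_i+\psi_i$ with $\ord\psi_i\ge j-a$ and assert that this forces $\xi_a$ to be the identity modulo $\mathfrak{D}_{\le j-a-1}$. But if $\ord\psi_i\ge m$ then $\sigma_a^{-1}(g)-g$ has order at least $\deg g + m-1$ for homogeneous $g$, and pairing against $F\in\mathfrak{D}_{\le j}$ shows $\xi_a(F)-F\in\mathfrak{D}_{\le j-m+1}$. With $m=j-a$ this gives $\xi_a(F)-F\in\mathfrak{D}_{\le a+1}$, which for $a<(j-1)/2$ does not even reach degree $j-a$: the exotic part $e_{j-a}$ you want to cancel sits in a degree your $\xi_a$ cannot touch. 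Your own assertion is also self-contradictory — if $\xi_a$ really were the identity modulo $\mathfrak{D}_{\le j-a-1}$, then $\xi_a(F)_{j-a}=F_{j-a}$ and nothing in degree $j-a$ would be cancelled. The constraint you actually want is $\ord\psi_i\ge a+1$ with a nontrivial degree-$(a+1)$ part: then $\xi_a(F)-F\in\mathfrak{D}_{\le j-a}$, so degrees $j,\dots,j-a+1$ are preserved and degree $j-a$ can move. The inequality you wrote is the wrong way round. You can check this against Example~\ref{3.10connsumex}: there $j=4$, $a=1$, and the corrector is $\psi=y^2$ of order $2=a+1$, not $j-a=3$; indeed, using $\psi=y^3$ (order $j-a$) leaves the exotic summand $Y^{[2]}X$ untouched.

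Beyond that, the solvability of the resulting linear system for the $\psi_i$ — the claim that the exotic coefficients lie in the image of the correction map — is exactly the content that must be proved, and it is where the definition of $n_a$ must be used. The paper avoids the induction and its solvability bookkeeping by a one-shot construction: choose local parameters $w_1,\ldots,w_r$ so that, for each $a$, the classes of $w_{n_{a-1}+1},\ldots,w_{n_a}$ span $Q(a)_1$; this automatically gives $w_i\circ f\in\mathfrak{D}_{\le j-a-1}$ for $i\ge n_{a-1}+1$, since those classes lie in $(0:\maxA^{\,j-a})$. Setting $\sigma(w_i)=x_i$ and $g=\xi(f)$, the identity $\bigl(h\circ(x_i\circ g)\bigr)(0)=\bigl(\sigma^{-1}(h)\circ(w_i\circ f)\bigr)(0)=0$ for $h\in\maxR^{\,j-a}$, $i\ge n_{a-1}+1$, immediately yields $g_{j-a}\in\kk_{DP}[X_1,\ldots,X_{n_a}]$ in all degrees simultaneously. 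Your proposal can likely be repaired by correcting the order of the $\psi_i$ and then invoking the same surjectivity coming from $Q(a)_1$, but as written the core computational claim fails.
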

\begin{proof}[Proof of Theorem \ref{normalform5thm}] 
By the definition of $n_a$ we may find a set of local parameters ${w_1,\ldots ,w_r}$ in $\maxR$ such that for ${0\le a\le j-2}$ the classes of ${w_{n_{a-1}+1},\ldots ,w_{n_{a}}}$ span $Q(a)_1$. In particular, the classes of ${w_{n_{a-1}+1},\ldots ,w_{n_{a}}}$ in $A$ lie in ${(0:\maxA^{\,j-a})}$ and therefore
\[
\langle w_{n_{a-1}+1},\ldots ,w_{n_{a}}\rangle\circ f \subseteq \mathfrak{D}_{\le j-a-1}.
\]
If ${n_{j-2}<r}$, we may choose ${w_{n_{j-2}+1},\ldots ,w_{r}}$ such that the initial forms of ${w_1,\ldots ,w_r}$ span ${\maxR/\maxR^{\,2}}$. Consider the ring automorphism $\sigma$ of ${R}$ given by ${\sigma (w_i)=x_i}$, for ${ 1\le i\le r}$ and the adjoint linear map $\xi$ of $\mathfrak{D}$, and let ${g=\xi (f)}$. 
(Warning: $\xi$ is not a ring homomorphism.) By \eqref{adjointboundeq} of Lemma \ref{adjointlem} $g$ has degree $j$. Let ${a\ge0}$ and ${i\ge n_{a-1}+1}$,  (we are working under the convention ${n_{-1}=0}$). Let ${h\in\maxR^{\,j-a}}$. Then
\[
\bigl(h\circ (x_i\circ g)\bigr)(0) = \bigl(\sigma^{-1}(h)\circ (w_i\circ f)\bigr)(0) = 0,
\]
because ${\sigma^{-1}(h) \in \maxR^{\,j-a} }$ and ${w_i\circ f\in \mathfrak{D}_{\le j-a-1} }$. But this evidently implies that
\begin{equation}
g_{j-a}\in {\kk}_{DP}[X_1,\ldots ,X_{n_a}],
\end{equation}
which is equivalent to $g$ having no exotic summands with respect to the basis $X_1,\ldots ,X_r$. For the last statement, we set $x_{n_a+1},\ldots  ,x_r$ equal to zero.
\end{proof}

\begin{example}
\label{3.10connsumex}
Consider ${f=Y^{[4]}+Y^{[2]}X\in \mathfrak{D}={\kk}_{DP}[X,Y]}$, and note that ${(-x+y^2)\circ f=X}$ and ${X\notin(x,y)^2\circ f}$, so $X$ is a partial of order $1$. Since ${y^3\circ f=Y}$, $Y$ is a partial of order $3$. Therefore the basis ${Y,X}$ satisfies the conditions of Definition \ref{exoticdef}, and we can see that the term $Y^{[2]}X$  is exotic, as it is a term of degree $3$ involving $X$. The ideal ${I=\Ann f\subset R={\kk}\{x,y\}}$ satisfies ${I=(x^2,\,xy-y^3)}$; the algebra ${A=R/I}$ has Hilbert function ${H({A})=(1,2,1,1,1)}$ and is stretched in the language of \cite{Sa,ACLY2,CN}. We have $\mathcal{D}({A})= \bigl(H(0)=(1,1,1,1,1),\, H(2)=(0,1,0)\bigr)$ so ${(n_0,n_1,n_2)=(1,1,2)}$. Take ${a=1}$. Then
\begin{equation} 
\mathcal{C}(2)_1=\{\mathrm{in} (h)\mid h\in\maxA\setminus\maxA^{\,2}, \,h\circ f\in \mathfrak{D}_{\le 4-1-2}\}=\langle \mathrm{in}  (x-y^2)\rangle=\langle x\rangle,
\end{equation}
as ${(x-y^2)\circ f=-X}$.  Following the notation of Theorem \ref{normalform5thm}, we let ${w_1=y}$, ${w_2=x-y^2}$. Set ${x_1= \sigma (y)=y}$, ${x_2=\sigma(x-y^2)=x}$.  So ${\sigma(x)=x+y^2}$.  Then the adjoint map ${\xi: \mathfrak{D}\to\mathfrak{D}}$ is the one from Example \ref{adjointex}, and
\begin{equation}
\xi (f)=\xi(Y^{[4]}+Y^{[2]}X)=(Y^{[4]}-YX^{[2]}+X^{[2]})+(Y^{[2]}X-2X^{[2]})=Y^{[4]}-X^{[2]},
\end{equation}
Now $\xi(f)$ has no exotic summands.\par
Thus we have by Theorem \ref{normalform5thm} that 
\begin{equation*}
{{A}=R/I\cong {A}'= R/J} \text { where }{J=\Ann \xi (f)=(x^2+y^4,\,xy)}.
\end{equation*}
Note that this is a rewriting of $I$ using the new parameters as ${x^2\mapsto (x+y^2)^2}$, ${xy-y^3\mapsto xy}$, so ${I=(x^2,\,xy-y^3)\to J=\bigl((x+y^2)^2,\,xy\bigr)=
(x^2+y^4,xy)}$.
\end{example}

\index{curvilinear algebra}%
\subsection{Isomorphism class}
\label{isomclasssec}
\begin{remark}[Parametrization vs. isomorphism class]  
A length-$n$ quotient $A$ of $R={\kk}\{x,y\}$ or $R'={\kk}\{x_1,\ldots,x_r\}$ is called ``curvilinear'' if $H(A)=(1,1,\ldots ,1)$. Such an algebra satisfies $A\cong {\kk}\{x\}/(x^n)$, for discussion see \cite[Example 2.17]{ChI}, \cite{Bri,I9}. The Gorenstein algebra quotients $\mathcal{A}={\kk}\{x,y\}/I$ with $H=H(\mathcal{A})=(1,1,1,1)$ form a family  $Z_H$.  Their associated graded algebras are a subfamily of the variety $G_H$ that parametrizes graded algebra quotients of $R$ having Hilbert function $H$ (Definition \ref{basicdef}). A typical element  of $Z_H$ is determined by $I=I_{a_1,a_2,a_3} =(y+a_1x+a_2x^2+a_3x^3,\, x^4)$ or $J=(x+b_1y+b_2y^2+b_3y^3,\, y^4)$, and they have associated graded ideal
$I^\ast=(y+a_1x, \mathfrak{m}^4)$ or $J^\ast=(x+b_1y, \mathfrak{m}^4)$, an element of the projective line $\mathbb P^1$. Thus $\pi: Z_H\to G_H\cong \mathbb P^1$ is fibred by an affine plane,  and there is a section ${\sf s}: G_H\to Z_H$, but this is not a vector bundle \cite{I0}.  This is a common occurrence for the maps $Z_H\to G_H$ in two variables and different Hilbert functions $H$. The affine bundles that are not vector bundles have recently been further studied by W.~Haboush and D. Hyeon in connection with families of commuting nilpotent matrices \cite{HH}. 
\end{remark}

The AG algebras of Hilbert function $H=(1,2^a,1^b)$ are studied up to isomorphism by J.~Elias and M. Rossi \cite{ER1}. Although certain short - as all socle degree 3 and socle degree 4-compressed AG algebras over $\mathbb C$ have, strikingly, been shown to be canonically graded- isomorphic to their associated graded algebra \cite{ER1,ER,ER2},\footnote{The article \cite{ER2} shows that socle degree three compressed algebras over $\mathbb C$ are canonically graded; this extends to characteristic not 2 (\cite[Example 2.16]{Je3}). Then \cite[Theorem 3.1]{ER2} shows that socle degree four compressed algebras over $\mathbb C$ are canonically graded; \cite[Corollary 3.15]{Je3} shows this also in characteristics not $2$ or $3$.} it is easy through a dimension calculation to show that this cannot occur in general. In particular those AG algebras of Hilbert function $(1,n,n,1)$ are shown in \cite{ER} to be canonically graded, but they note an example of an AG algebra of Hilbert function $H=(1,2,2,2,1)$ that is not canonically graded.\par
J. Jelisiejew in \cite{Je2} classifies algebras with Hilbert function $H=(1, 3, 3, 3, 1)$, obtaining finitely many isomorphism types; he also classifies up to isomorphism those with Hilbert function $H=(1, 2, 2, 2, 1, 1, 1)$ over fields of characteristic zero or larger than $6$.  In $\cha \kk =0$ he uses new techniques related to Lie algebra, considering the orbits of the $\Aut (R)$ action on the family $Z_H$, and the tangent spaces to the orbits.\par

\subsubsection{Characteristic dependence of Hilbert function.}
We give an example showing that the Hilbert function $H(A)$ and hence the symmetric decomposition $\mathcal{D}(A)$ for $A=R/\Ann f$ for a fixed dual generator $f$ may depend upon the characteristic of $\kk$, even using the contraction action of $R$ on $\mathfrak{D}$.  It is open whether there are such examples where $H(A)$ and $f$ remain fixed but $\mathcal{D}(A)$ depends on the characteristic.\par
\index{characteristic dependence@characteristic dependence!of Hilbert function}%
Of course, even considering graded AG algebras, taking $f=X^{[4]}+nX^{[2]}Y^{[2]}+Y^{[4]}$ we have $H(A)$, $A=R/\Ann f$ satisfies $H(A)=(1,2,3,2,1)$ except in characteristic $p$ dividing the integer $n$, when $H(A)=(1,2,2,2,1)$. We propose the following more subtle example, where the dual generator does not so obviously change with $\cha {\kk}$.
\begin{example}[Characteristic dependent Hilbert function for $F=L^{[m]}+X^{[n]}Y^{[n]}$]
\label{chardependex} 
Consider first the special case $R={\kk}\{x,y\}$, $f= X^{[2]}Y^{[2]}\in \mathfrak{D}={\kk}_{DP}[X,Y]$ and take $ L=X+aY$. Then the ideal $\Ann L=(ax-y)$ and $A=R/\Ann f$ is compressed of Hilbert function $H_A=(1,2,3,2,1)$. Let $F=L^{[6]}+f$.  Writing the matrix for the basis $\{L^{[2]},\, f_1=(ax-y)y\circ f,\, f_2=(ax-y)x\circ f\}$ of $(R\circ F)_2$ in terms of the basis $X^{[2]},XY, Y^{[2]}$ of $\mathfrak{D}_2$ we obtain
\begin{equation}\label{matrix1eq}
\begin{array}{c|ccc}
&L^{[2]}&f_1&f_2\\
\hline\\[-1.8ex]
X^{[2]}&1&-1&0\\
XY&a&a&-1\\
Y^{[2]}&a^2&0&a\\
\end{array}
\end{equation}
whose determinant is $3a^2$. Thus for $\cha {\kk}\not=3$, for general enough $L$ we have $H_F=(1,2,3,2,1,1,1)$, where $H_F(0)=(1^7)$ and $H_F(2)=(0,1,2,1)$. But when $\cha {\kk}=3$, for every $L$ we have $H_F=(1,2,2,2,1,1,1)$, where $H_F(2)=(0,1,1,1)$. Here $X^{[2]}Y^{[2]}$ is never an exotic summand of $F$ as there is the expected number of variables involved in $Q(0)$ and in $Q(1)=R/\mathcal{C}(1)$. \par
Since this is a structural issue, the same exceptional behavior in characteristic 3 must occur for $F=L^{[6]}+B^{[2]}\cdot C^{[2]}$ where $B,C\in \mathfrak{D}_1$ are linear. However, the product $B^{[2]}\cdot C^{[2]}$ is in the divided power sense. Taking $B=X+bY, C=X+cY$ we have
\begin{align*}
B^{[2]}\cdot C^{[2]}&=(X^{[2]}+bXY+b^2Y^{[2]})\cdot (X^{[2]}+cXY+c^2Y^{[2]})\\
&={\tbinom{4}{2}}X^{[4]}+{\tbinom{3}{2}} (b+c)X^{[3]}Y+(b^2+c^2)X^{[2]}Y^{[2]}\\
&\equiv (b^2+c^2)X^{[2]}Y^{[2]} \text { when } \cha {\kk}=3.
\end{align*}
\normalsize In fact, $f= X^{[2]}Y^{[2]}$ or a multiple is the only exceptional degree four form in ${\kk}_{DP}[X,Y]$ for which there is a characteristic-dependent Hilbert function for $F=L^{[m]}+f$, $m>4$.\par
Note that $H_F=(1,2,3,2,1,1,1)$ is the Hilbert function of a relatively compressed $a=2$-modification in $R$ of $A_0=R/{\mathcal C}(0)$ (where $H(0)=(1,1,\ldots, 1)$). Proposition~\ref{maxprop}(b) concerning the existence of RCM's requires only that $\kk$ be an infinite field, so this is not an example where the set of possible Hilbert functions for fixed embedding dimension and socle degree depends on the characteristic of $\kk$: that is, we can achieve $H_G=H_F$ in any characteristic by taking $G=g+L^{[6]}$, with $g$ general enough of degree 4 (see Question \ref{Frobenius2ques}).
\vskip 0.2cm
Now consider more generally $f=X^{[n]}Y^{[n]}$ and $F=f+L^{[m]}$, $L=X+aY$ with $m>2n, n\ge 2$. Note that $x^{m-n}\circ F=x^{m-n}\circ L^{[m]}=\alpha L^{[n]}$ with $\alpha\in \kk$ and $\alpha\not=0$. We form a matrix $M_F$ whose $(n+1)$ rows correspond to the monomials of $\mathfrak D_n$, and whose columns correspond to the elements of $\mathfrak D_n$ spanning $(R\circ F)_n$.  The first column is $L^{[n]}$; there are $n$ further columns from applying $\Ann L=ax-y$ to a monomial basis of $R^{n-1}\circ f\subset \mathfrak{D}_{n+1}$, as $((ax-y)R_{n-1})\circ F=((ax-y)R_{n-1})\circ f\subset (R\circ f)_n$. That is, $M_F$ has columns
\begin{align*}
L^{[n]}, f_1&=(ax-y)\circ X^{[n]}Y, \ldots ,f_i=(ax-y)\circ X^{[n+1-i]}Y^{[i]},\ldots,f_n=(ax-y)\circ XY^{[n]}, \text { where }\\
f_i&=-X^{[n+1-i]}Y^{[i-1]}+aX^{[n-i]}Y^{[i]},\, 1\le i\le n.
\end{align*}
The matrix $M_F$  (see Figure \ref{matrixfig}) is readily seen to have determinant $(n+1)a^n$. 
\begin{figure}
\begin{equation}\label{matrix2eq}
M_F:\quad \begin{array}{c|ccccc}
&L^{[n]}&f_1&f_2&\ldots &f_n\\
\hline\\[-1.8ex]
X^{[n]}&1&-1&0&\ldots &0\\
X^{[n-1]}Y&a&a&-1&\ldots &0\\
X^{[n-2]}Y^{[2]}&a^2&0&a&\ldots &0\\
\cdots &&\ldots &&&\\
XY^{[n-1]}&a^{n-1}&0&\dots&a&-1\\
Y^{[n]}&a^n&0&\ldots &0&a\\
\end{array}
\end{equation}
\caption{Matrix $M_F$ of $(R\circ F)_n$ for $ F=X^{[n]}Y^{[n]}+L^{[m]}$.}\label{matrixfig}
\end{figure} 
Thus, we have, for general enough $L$,
\begin{align*}
H_F&=(1,\, 2,\ldots ,\, n-1,\, n,\, n+1,\, n,\, n-1,\ldots ,\, 2,\, 1_{2n},\, 1,\ldots ,\, 1_m),\\
\mathcal{D}_F&=\bigl( H(0)=(1,1,\ldots ,1_m),\, H(m-2n)=(0,1,2,3,\ldots, n-1,n,n-1,\ldots,3,2,1_{2n-1},0)\bigr),
\end{align*}
except when $\cha k$ divides $n+1$, in which case $H(m-2n)$ has maximum value $n-1$, and
\begin{equation*}
H_F=(1,\,2,\ldots,\, n-1,\, n,\, n,\, n,\, n-1,\ldots ,2,\, 1_{2n},\ldots , \,1_m).
\end{equation*}
\end{example}

\subsection{Connected sums.}\label{connectedsumsec}
The term ``connected sum" was introduced to topology apparently by John Milnor around 1960, then used for Artinian Gorenstein algebras over a field ${\kk}$ by C.-H. Sah and J. Lescot
\cite{Sah, Les}. Connected sums of graded Artinian Gorenstein algebras occur prominently in the topology-influenced book by M. Meyer and L. Smith \cite[p. 11]{MS}; their relation to topology is also studied in the article by L. Smith and R.E. Stong
\cite{SmSt1} on graded Poincar\'{e} duality algebras over $\cha \kk=2$. They have been more recently studied by H. Ananthnarayan, A.~Avramov and W.~F.~Moore, then by H.~Ananthnarayan, E. Celikbas, J. Laxmi, and Z.~Yang  \cite{AAM, ACLY1,ACLY2}. T. Matsumura and W.~F.~Moore studied connected sums of simplicial complexes and their Stanley-Reisner rings \cite{MaMo}. Other recent articles have focused on the graded case \cite{BBKT}.  \par The basic concept of an AG connected sum is simple: write the Macaulay dual generator as a sum of polynomials (divided powers) in two separate sets of variables. The concept occurs also in \cite[Theorem 5.5]{I1}, where the emphasis is in constructing AG algebras with certain given Hilbert functions, by writing the dual generator $f$ as a sum of specified powers of a sequence of generic linear forms (\cite[Theorem 5.8]{I1}). We work over an arbitrary infinite field ${\kk}$, recall $R={\kk}[x_1,\ldots, x_r]$.
\begin{definition}
\label{connectedsumdef} 
An AG algebra ${A=R/I}$, ${I=\Ann f}$ is a \emph{connected sum}\index{connected sum} over the field $\kk$ if, possibly after a coordinate change, there is a decomposition of the variables $L$ of $\mathfrak{D}$ into two disjoint subsets,
$L=L_1\cup L_2$ so that $f=f_1+f_2$ with each $f_i$ a polynomial in the variables $L_i$. We denote by $\mathfrak{D}(i)$, $i=1,2$ the corresponding divided power rings, by $R(i)\subset R$, $i=1,2$ the corresponding rings,  by $\mathcal{X}_1,\mathcal{X}_2\subset \{x_1,x_2,\ldots ,x_n\}$, the variables of $R(1)$, $R(2)$, respectively, so $R(i)={\kk}[\mathcal{X}_i]$; and we denote by $I(i)\subset R(i)$ the ideal $I(i)=(\Ann f_i)\cap R(i)$, $i=1,2$.
\end{definition}
To be more precise we say that ${A}$ is a connected sum in $\#L_1$ and $\# L_2$ variables, or that ${A}$ has a connected summand in $\# L_1$ variables. 
\begin{lemma}{\rm \cite{MS,ACLY1}}
\label{connsumlem}  
Let ${A}=R/I$ be a connected sum, and let ${A(i)=R(i)/I(i)}$ for $ {i=1,2}$.  Then ${I\supset \mathcal{X}_1\cdot \mathcal{X}_2}$ and $I$ has a minimal generator of order ${j_{\min} =\min\{ j_1, j_2\}}$ with $j_i$ the socle degree of  $A(i)$. 
The Hilbert function $H({A})$ satisfies
\begin{equation}\label{HFconnsumeq}
H(A)=H({A}_1)+H({A}_2)- (1,0,\ldots ,0, 1_{j_{\min}}).
\end{equation}
\end{lemma}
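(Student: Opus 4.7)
The plan is to exploit the splitting $f=f_1+f_2$ with $f_1\in\mathfrak{D}(1)$, $f_2\in\mathfrak{D}(2)$ in disjoint variable sets $\mathcal{X}_1$, $\mathcal{X}_2$. The inclusion $\mathcal{X}_1\cdot\mathcal{X}_2\subseteq I$ is immediate: for $x\in\mathcal{X}_1$, $y\in\mathcal{X}_2$, the contraction by $y$ annihilates $f_1$ and the contraction by $x$ annihilates $f_2$, so $(xy)\circ f=0$. Every $\varphi\in R$ may then be written as $\varphi=c+\varphi_1+\varphi_2+\varphi_{12}$ with $c\in\kk$, $\varphi_i\in\maxR(i)$, and $\varphi_{12}$ a sum of mixed monomials (which lie in $(\mathcal{X}_1\cdot\mathcal{X}_2)\subseteq I$), yielding $\varphi\circ f=c(f_1+f_2)+\varphi_1\circ f_1+\varphi_2\circ f_2$, and hence
\begin{equation*}
R\circ f=\kk(f_1+f_2)+(\maxR(1)\circ f_1)+(\maxR(2)\circ f_2).
\end{equation*}
I would then verify the pairwise intersections: $\kk(f_1+f_2)$ is disjoint from the other two summands because $f_1+f_2$ has top degree $j_A:=\max\{j_1,j_2\}$ while each $\maxR(i)\circ f_i$ has top degree at most $j_i-1$; and $(\maxR(1)\circ f_1)\cap(\maxR(2)\circ f_2)=\kk$, since each summand contains all of $\kk$ (because $f_i$ is a dual generator, some $\psi\in\maxR(i)^{j_i}$ satisfies $\psi\circ f_i=1$) and $\mathfrak{D}(1)\cap\mathfrak{D}(2)=\kk$.

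Given this presentation, the Hilbert function is computed by taking leading forms degree by degree. Assume without loss of generality $j_1=j_{\min}\le j_2=j_A$. Since $\mathfrak{D}(1)$ and $\mathfrak{D}(2)$ are based on disjoint variable sets, leading forms arising from $\maxR(1)\circ f_1$ and from $\maxR(2)\circ f_2$ remain linearly independent in every positive degree, so
\begin{equation*}
(R\circ f)^*_i=(\maxR(1)\circ f_1)^*_i\oplus(\maxR(2)\circ f_2)^*_i\quad\text{for }0<i<j_A.
\end{equation*}
For $0<i<j_1$ each summand equals its $R(i)\circ f_i$ counterpart, yielding $H(A)_i=H(A_1)_i+H(A_2)_i$. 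At $i=j_1=j_{\min}$ exactly one dimension is lost: $(\maxR(1)\circ f_1)^*_{j_1}=0$ (as $\maxR(1)$ has positive order), while $H(A_1)_{j_1}=1$; the socle leading term $(f_1)_{j_1}$ of $A(1)$ is absorbed because the only partial of $f$ producing it would be $1\circ f=f_1+f_2$, whose top degree is $j_A\ge j_1$. Analogous bookkeeping at $i=0$ (all constants merge into one copy of $\kk$) and at $i=j_A$ (the summand $\kk(f_1+f_2)$ supplies the $1$-dimensional socle) completes the proof of the Hilbert function identity.

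For the minimal generator of order $j_{\min}$, the plan is to compare $I^*$ with the graded subideal $J^*:=I(1)^*+I(2)^*+(\mathcal{X}_1\cdot\mathcal{X}_2)\subseteq I^*$, where the inclusion $I(i)^*\subseteq I^*$ holds because any $\psi\in I(i)$ annihilates both $f_i$ and $f_{3-i}$. The same bookkeeping applied to $R/J^*$ gives $H(R/J^*)=H(A_1)+H(A_2)-(1,0,0,\ldots)$, and subtracting the Hilbert function of $A$ yields $\dim(I^*/J^*)_i=\delta_{i,j_{\min}}$. Since $I^*=J^*$ in degrees below $j_{\min}$, we have $(\maxR\cdot I^*)_{j_{\min}}=\maxR\cdot J^*_{<j_{\min}}\subseteq J^*_{j_{\min}}$, so any $\psi\in I^*_{j_{\min}}\setminus J^*_{j_{\min}}$ cannot lie in $\maxR\cdot I^*$; hence $\psi$ is a minimal generator of $I^*$ of degree $j_{\min}$, and any lift $\tilde\psi\in I\cap\maxR^{j_{\min}}$ with $\mathrm{in}(\tilde\psi)=\psi$ is a minimal generator of $I$ of order $j_{\min}$. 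The main subtlety is the boundary analysis at $i=j_{\min}$: the socle of $A(1)$ is absorbed into the socle of $A$ through the identity $1\circ f=f_1+f_2$, which simultaneously accounts for the missing $1$ in $H(A)_{j_{\min}}$ and produces the required new minimal generator.
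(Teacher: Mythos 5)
Your Hilbert-function computation is essentially sound. The gap is in the final inference of the minimal-generator argument: from ``$\psi$ is a minimal generator of the graded ideal $I^*$'' you conclude that ``any lift $\tilde\psi\in I$ with $\mathrm{in}(\tilde\psi)=\psi$ is a minimal generator of $I$.'' That step is false in general for local rings. To get $\tilde\psi\notin\maxR I$ you would need $\psi\notin(\maxR I)^*$, but you only established $\psi\notin\maxR\cdot I^*$, and the containment $\maxR\cdot I^*\subseteq(\maxR I)^*$ is usually strict: a combination $\sum m_k g_k$ with $m_k\in\maxR$, $g_k\in I$ can have initial form of much higher degree than $\min\{\ord(m_k)+\ord(g_k)\}$ through cancellation. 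Concretely, the connected-sum ideal $I=\Ann(-X^{[2]}+Y^{[3]})=(x^2+y^3,\,xy)\subset\kk\{x,y\}$ has $I^*=(x^2,\,xy,\,y^4)$; here $y^4$ is a minimal generator of $I^*$ lying outside $\maxR\cdot I^*$, yet $y^4=y(x^2+y^3)-x(xy)\in\maxR I$, and $I$ has no minimal generator of order $4$ (it has exactly two, both of order $2$). The lemma's conclusion survives in this example because the degree-$j_{\min}=2$ generator of $I^*$ happens to lift well, but your argument as written gives no reason why a degree-$j_{\min}$ minimal generator of $I^*$ should behave any better than $y^4$ does.

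The gap closes if you compare $I$ with the honest (non-graded) ideal $J:=I(1)R+I(2)R+(\mathcal{X}_1\cdot\mathcal{X}_2)$ rather than with its associated graded. The inclusion $J\subseteq I$ holds because $I(i)\subseteq\maxR(i)$ annihilates both $f_i$ and $f_{3-i}$, and the same length count you already carried out gives $\dim_\kk R/J=\dim_\kk A(1)+\dim_\kk A(2)-1$, whence $\dim_\kk(I/J)=1$. A one-dimensional $R$-module is necessarily annihilated by $\maxR$, so $\maxR I\subseteq J$, and therefore \emph{every} element of $I\setminus J$ is a minimal generator of $I$. Since the three summands of $J$ have pairwise disjoint monomial supports, no cancellation occurs and $(J)^*$ coincides with your $J^*$; consequently your lift $\tilde\psi$ of $\psi\in I^*_{j_{\min}}\setminus J^*_{j_{\min}}$ lies in $I\setminus J$ and has order $j_{\min}$. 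Alternatively one can write down the generator explicitly as $g_1-g_2$, with $g_i\in\maxR(i)^{j_i}$ representing the socle of $A(i)$ and normalized so that $g_1\circ f_1=g_2\circ f_2$. (The paper itself cites this lemma from \cite{MS,ACLY1} without giving a proof, so there is no in-text argument to compare against.)
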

See \cite{ACLY1} for a discussion: they in fact show that connected sums (even up to isomorphism) are quite rare: see for example their Theorems 3.6 and 3.9. It is shown using just a dimension calculation in \cite[Proposition 4.4]{SmSt} that when $r\ge 4$ and $j\ge 3$ there are Gorenstein algebras that are not a connected sum. \par The following is a result that is due essentially to H. Ananthnarayan, E. Celikbas, and Z.~Yang  \cite[Theorem 34]{ACY}; our statement is slightly different,  and we give a different proof. The authors of \cite{ACY} introduce  ``graded Gorenstein up to almost linear socle'' and work entirely in the ring $R$: the Theorem 39 of \cite{ACY} generalizes the results of J. Sally \cite{Sa} on stretched Gorenstein algebras and one of the results in J. Elias and M. Rossi in \cite{ER} (see also \cite[Theorem 5.6]{ACLY2}). We arrived at the following slightly more general formulation after considering their result.
To prove this, we work in the dual ring, with the dual generator, and we apply the Normal Form Theorem \ref{normalform5thm}.\footnote{Since first writing this we learned that there is a proof of essentially the same statement, however, without reference to characteristic  (so, presumably, in characteristic zero) in \cite[Proposition~5.1]{Je}; this result is shown over an algebraically closed field of arbitrary characteristic not 2 or 3 in \cite[Proposition 4.5]{CJN}. The result is stated in somewhat different language and proven differently in \cite[Theorem 4.3]{CN}. We have referred to \cite{ACY}, which has been replaced by the later \cite{ACLY1} and \cite{ACLY2}, although not all results exactly correspond.}
\begin{theorem}{\rm \cite{ACY}}
\label{0s0thm} 
Let ${A}$ be an AG algebra of socle degree $j$, and suppose that $H(j-2)=(0,s,0)$. Let ${\kk}$ be an infinite field not having characteristic two. Then ${A}$ has a connected summand in $s$ variables.
\end{theorem}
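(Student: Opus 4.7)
I would proceed in three steps: first, apply Theorem~\ref{normalform5thm} to put the dual generator $f$ in normal form; second, observe that the reflexive pairing of Equation~\eqref{reflexiveeq} endows the degree-two part of $f$ with a nondegenerate symmetric bilinear form in the $Z$ variables; third, use this nondegeneracy together with $\cha\kk\neq 2$ to complete the square and exhibit $f$ as a connected sum.

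Note first that $H_A(a)\equiv 0$ for every $a\geq j-1$: the symmetry of $H_A(a)$ about $(j-a)/2\leq 1/2$ combined with $H_A(a)_0=0$ (which holds because $H_A(0)_0=H(A)_0=1$) forces the sequence to vanish. Hence the embedding dimension of $A$ equals $n_{j-2}=n_{j-3}+s$; set $m:=n_{j-3}$. By Theorem~\ref{normalform5thm} I may replace $A$ by an isomorphic algebra whose dual generator has no exotic summands: labelling the first $m$ distinguished variables $X_1,\ldots,X_m$ and the remaining $s$ as $Z_1,\ldots,Z_s$, we have
\[
f=g+f_2+f_1+f_0,\qquad g:=f_j+f_{j-1}+\cdots+f_3\in\kk_{DP}[X_1,\ldots,X_m],
\]
with $f_2,f_1,f_0\in\kk_{DP}[X_1,\ldots,X_m,Z_1,\ldots,Z_s]$. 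Split the quadratic term into blocks $f_2=q_{XX}(X)+q_{XZ}(X,Z)+q_{ZZ}(Z)$, let $C$ be the $m\times s$ matrix of $q_{XZ}$, and let $D$ be the symmetric $s\times s$ matrix with entries $D_{ik}=(z_iz_k\circ f)_0$.

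The pivotal claim, and the main technical obstacle, is that $D$ is nonsingular. By the normal form, the dual variables $z_i$ satisfy $\bar z_i\cdot\maxA^{\,2}=0$ in $A$ (no $Z$ occurs in $g$, and $f_{\leq 1}$ has degree $\leq 1$) and are independent modulo $\maxA^{\,2}$, so the classes $\bar z_1,\ldots,\bar z_s$ form a basis of $C_A(j-2)_1=Q_A(j-2)_1$ -- the last equality because $C_A(j-1)_1=0$ (for $j\geq 3$ the socle lies inside $\maxA^{\,2}$). Computing $(z_iz_k\circ f)_0$, only $q_{ZZ}$ contributes: $g$ has no $Z$'s, $q_{XZ}$ reduces under a single $z$-contraction to a pure $X$-linear form (whose constant term vanishes), and $f_{\leq 1}$ is annihilated by the degree-two operator $z_iz_k$. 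Consequently $D$ is precisely the Gram matrix, in the basis $\{\bar z_i\}$, of the reflexive pairing $Q_A(j-2)_1\times Q_A(j-2)_1\to\kk$ from Equation~\eqref{reflexiveeq}; Theorem~\ref{symdecompthm} asserts this pairing is exact, so $D$ is invertible.

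With $D$ invertible, take $\alpha$ to be the unique $s\times m$ matrix with $\alpha^T D=C$, and perform the linear change of coordinates $Z_j\mapsto Z_j':=Z_j+\sum_k\alpha_{jk}X_k$, $X_k\mapsto X_k$. This automorphism of $\mathfrak{D}_1$ fixes $\langle X_1,\ldots,X_m\rangle$, so the term $g$ and the whole normal form above degree two are preserved verbatim. The hypothesis $\cha\kk\neq 2$ enters precisely here: it licenses the identification $Z_i^{[2]}=\tfrac12 Z_i^2$, turning divided-power quadratic forms into genuine symmetric bilinear forms so that the standard completing-the-square calculation converts $f_2$ into $q'_{XX}(X)+q'_{ZZ}(Z')$, with $q'_{ZZ}$ still having matrix $D$. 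The linear form $f_1$ splits tautologically into an $X$-part plus a $Z'$-part, and $f_0$ is absorbed into either summand. Thus $f=F_X(X)+F_Z(Z')$ with $F_X\in\kk_{DP}[X_1,\ldots,X_m]$ and $F_Z\in\kk_{DP}[Z_1',\ldots,Z_s']$, giving the desired connected-sum decomposition; the $Z$-summand has embedding dimension exactly $s$ by the nonsingularity of $D$.
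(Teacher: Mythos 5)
Your proof is correct and follows the paper's route: apply the Normal Form Theorem~\ref{normalform5thm} to push $f_j,\ldots,f_3$ into the $X$-variables, establish that the purely-$Z$ quadratic block $q_{ZZ}$ of $f_2$ is nondegenerate, and then complete the square in characteristic $\neq 2$ to split off the $Z$-summand. The one place your argument differs in detail is the nondegeneracy step, where the paper first diagonalizes $q_{ZZ}$ and then rules out a zero diagonal entry by a dimension count, while you identify $D$ directly as the Gram matrix of the exact reflexive pairing $Q_A(j-2)_1\times Q_A(j-2)_1\to\kk$ from Theorem~\ref{symdecompthm}---a cleaner observation that makes the role of the symmetric decomposition theorem explicit.
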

\begin{proof}  
Let $f$ be the dual generator for ${A}$. We may assume ${A}$ has codimension $n$, let $R={\kk}\{x_1,\ldots ,x_n\}$, so we may take $f_1=0$. By assumption $n_{j-3}=n-s$, so by Theorem \ref{normalform5thm} we may assume that after a change of variables,  $f_j+\cdots +f_{3}\in {\kk}_{DP}[X_1,\ldots ,X_{n-s}]$. Let ${\mathfrak{D}} (1)= {\kk}_{DP}[X_1,\ldots X_{n-s]}$, ${\mathfrak{D}}(2)= {\kk}_{DP}[X_{n_s+1},\ldots ,X_n]$. We may write
\begin{equation}
f_2=g_2+\sum_{i=1}^s L_i X_{n-s+i}+h_2 \text { where }  g_2\in  { \mathfrak{D}}(2)_2,\, L_i\in  { \mathfrak{D}}(1)_1,\, h_2\in  { \mathfrak{D}}(1)_2.
\end{equation}
Since $\cha {\kk}\not=2$, we may diagonalize $g_2$, after a change of basis of ${\mathfrak{D}} (2)$, so we may write
\begin{equation}
g_2=\sum_{i=1}^s  a_iX_{n-s+i}^{\,[2]}.
\end{equation}
Here each $a_i\not=0$, else we claim $H(j-2)_1<s$.  For if $a_i=0$ then $X_i$ must appear in $R\circ f$ by the action of an linear element $\ell\in \langle x_1,\ldots ,x_{n-s}\rangle$ satisfying $\ell \circ L_i \not=0$. This can happen only if  $\ell\circ (f_j+\cdots +f_3)=0$, which contradicts the assumption that $\sum_{i=0}^{j-3} H(i)_0=n-s$ (since only terms of $f$ having degree at least three can contribute to that sum). This shows that $a_i\not=0$ for each $i$.
\par 
Now replacing  $X_{n-s+i}$ by $X_{n-s+1}+L_i/2$, and replacing $h_2$ by $h_2-\sum L_i^{\,[2]}/4$, we have $f_2=\sum X_{n-s+i}^{\,[2]}+h_2$ and have written $f$ as a connected sum of $(f_j+\ldots +f_{3}+h_2)\in  { \mathfrak{D}}(1)$ and $g_2\in  { \mathfrak{D}}(2)_2$, so
${A} $ has a connected summand in $s$ variables.
\end{proof}\par
The following example shows that the statement of Theorem \ref{0s0thm} does not extend to adding on $H(j-3)=(0,s,s,0)$.
\begin{example}[RCM of a curvilinear algebra may not be a connected sum]
\label{nonCS1ex}
Consider the form ${F=Z^{[4]}+YX^{[2]}+Y^{[2]}Z}$ in ${\mathfrak{D}={\kk}_{DP}[X,Y,Z]}$ and ${R={\kk}\{x,y,z\}}$,  ${A=R/I}$, where $I=\Ann F=(x^2-yz,\, xz,\, y^2-z^3)$.  Here $H(A)=(1,3,3,1,1)$ and $A$ is an RCM of the curvilinear algebra $A(0)\cong R=R/(x,y,z^5)$. We have  
\begin{equation*}
{\mathcal{D}(A)=\bigl(H_A(0)=(1,1,1,1,1),\, H_A(1)=(0,2,2,0)\bigr)}.
\end{equation*}
Since $A$ is a complete intersection, it cannot be a connected sum:  in order to be a connected sum by Lemma \ref{connsumlem} applied to $r=3$ variables, $I$ must satisfy, $I\supset \ell\cdot  V$ for some $\ell\in R_1$ and a space $V\subset R_1$ of dimension 2.\vskip 0.2cm
The algebra $A'=R/\Ann(F^\prime)$, ${F^\prime=Z^{[4]}+X^{[3]}+Y^{[2]}Z}$, ${I_{F'}=(xy,\,xz,\,y^2-z^3,\,x^3-y^2z,\,yz^2)}$ is evidently a connected sum, and is an RCM of $A'' =R/\Ann F''$, $F''=Z^{[4]}+YZ^{[2]}$ where $H_{A''}(1)=(0,1,1,0)$, and $I_{F''}=(x,\,yz-z^3,\,y^2)$.
\par  Each of $F$, $F'$ determines an AG algebra of decomposition $\mathcal{D}(A)$.
\end{example}
We don't know if there is a counterexample to the connected summand statement of Theorem \ref{0s0thm} when $\cha \kk =2$.
\begin{example} 
Consider again ${f=Y^{[4]}+Y^{[2]}X\in \mathfrak{D}={\kk}_{DP}[X,Y]}$, $A={\kk}\{x,y\}/\Ann f$ from Example \ref{3.10connsumex}, of Hilbert function ${H({A})=(1,2,1,1,1)}$.  Here ${w_1=y}$, ${w_2=x-y^2}$, ${x_1= \sigma (y)=y}$, ${x_2=\sigma(x-y^2)=x}$.  Then the adjoint map ${\xi: \mathfrak{D}\to\mathfrak{D}}$ is
\begin{equation}
\xi (f)=\xi(Y^{[4]}+Y^{[2]}X)=(Y^{[4]}-YX^{[2]}+X^{[2]})+(Y^{[2]}X-2X^{[2]})=Y^{[4]}-X^{[2]},
\end{equation}
which is a connected sum.\par
It can be shown, similar to \cite{I0,HH} that the variety $\mathfrak{Z}_H$ parametrizing AG quotients $A$ of $R$ having Hilbert function ${H({A})= H=(1,2,1,1,1)}$ is an $\mathbb{A}^3$ bundle over the projective line ${\mathbb{P}^1=\mathfrak{G}_H}$ that parametrizes the associated graded ideals ${I^\ast (\ell)=(\ell \cdot x,\ell \cdot y,\maxA^{\,5})}$, ${\ell\in R_1}$.  The fibre of $Z_H$ over $\mathbb P^1$ when $\ell= x$, as above, is
\begin{equation}
I_{a,b,c}=(x^2-cy^4,\,xy-ay^3-by^4), 
\end{equation}
with ${a\ne0}$, whose dual generator is $F_{a,b,c}=Y^{[4]}+aY^{[2]}X+bYX+cX^{[2]}$. By Theorem \ref{0s0thm} each such algebra $A$ has a connected summand. 
\end{example}
\subsection{Questions and open problems.}\label{questionsec}
Our questions indicate what we feel are some worthwhile open directions for exploration, most of which we have not discussed previously in the paper.

First, the structure theorem for codimension two AG algebras (see \cite[Theorem 2]{I5},\cite[Theorem 2.2]{I1}) shows that the symmetric decomposition $\mathcal{D}(A)$ of the Hilbert function in codimension two depends only on the Hilbert function $H(A)$.\footnote{In codimension two, also, the associated graded algebra $A^\ast$ of an AG algebra determines the ideals $C(a)$ and the subquotients $Q(a)$, which are graded complete intersections (Remark \ref{diffdecomprem} and \cite[Theorem 2.2]{I1}).} The sequences possible for $H_A(0)=H\bigl(Q_A(0)\bigr)$ (the graded Gorenstein sequences)  are known in codimension three, as a result of the D.~Buchsbaum-D. Eisenbud structure theorem \cite{BuEi}; that theorem also implies that the number of generators of a height three Gorenstein ideal is odd, first shown by J.~Watanabe \cite{W1}. However, the following question for codimension three and higher is quite open for $a>0$.
\begin{question}  
Are there distinguishing characteristics of symmetric decompositions for 
AG algebras that depend on the codimension? For example, are the modules $Q_A(a)$ or the set of Hilbert functions $H_A(a)$ simpler in codimension three than in codimension four? 
\end{question}
\index{parametrizing AG algebras of given symmetric decomposition}%
\begin{question}[Parametrization] 
Given a Gorenstein sequence $H$ and a symmetric decomposition $\mathcal{D}$ of $H$, can we parametrize the AG algebras having decomposition $\mathcal{D}$?  Can we bound the dimension of the family of quotients of $R$ having decomposition $\mathcal{D}$?  Answers are known when $r=2$ (see \cite[Section 2]{I1}). 
\end{question}
\index{deformation within ${\Gor}(H)$}%
\begin{question}[Deformation within a given Gorenstein sequence] 
Fix a Gorenstein sequence $H$. Describe the possible symmetric decompositions and the closure of the family of AG quotients of $R$ having a given symmetric decomposition in $\Gor_H(R)$. See \cite[\S 4]{I1}.
\end{question}
\index{GorHR@$\Gor_H(R)$!closure}%
\begin{question}[Closure of $\Gor_H(R)$]
Fix a Gorenstein sequence $H=(1,r,\ldots )$ of length $n$ ($H$ is usually non-symmetric). Determine the closure of $\Gor_H(R)$ in the family of all Artinian algebra quotients of $R$. See \cite{Bri,I8} where the case $r=2$ is studied. In particular J. Brian\c{c}on shows that every ideal of $R=\mathbb C\{x,y\}$ over the complexes has a
CI deformation: this is the first step in his proof that the fibre of the punctual Hilbert scheme $\mathrm{Hilb}^n \mathbb P^2$ over a point of $\mathbb P^2$ is irreducible.
\end{question} 
\index{elementary components!of punctual Hilbert scheme}%
\begin{question}[Elementary components] 
A \emph{generic} AG algebra $A$ is one such that any deformation of $A$ has the same Hilbert function and symmetric decomposition. An elementary component of the Hilbert scheme of length-$n$ schemes in $\mathbb A^r$ is one parametrizing local algebras concentrated at a single maximum ideal, so quotients of $R$.  An example of a generic AG algebra corresponding to an elementary component of $\Hilb^n(\mathbb A^r)$ is a general enough AG algebra $A=R/I$ with $H(A) =(1,r,r,1)$ for $r=6$ \cite{EmI,Je1} and for $8\le r\le 12$ (and, conjecturally, for all $r\ge 8$, and many more AG graded Hilbert functions of higher socle degrees \cite{EmI,IKa}.\footnote{In \cite{IKa}, proof of Lemma 6.21, it is stated that a small tangent space criterion (STC) of Equation  (6.31) there, has been checked for $H=(1,r,r,1)$ for $r=6$, and for $8\le r\le 12$ by computer calculation (there is a change of notation); the STC gives elementary components. In the proof of Corollaries 6.28, 6.29 there it is stated that Equation  (6.3.1) has been verified for the compressed Gorenstein Hilbert functions  $T(15,4)$ and $T(5,5)$. Here $j=15$ is the smallest socle degree in height 4 for which STC can work and $j=5$  is the smallest socle degree in height 5 for which STC can work.}    What are the generic AG algebras: in particular, are there any for which $H(A)\not=H_A(0)$, that is, whose Hilbert function is not symmetric?  It is remarkable that M. Huibregtse has determined elementary components of the Hilbert scheme in five or more variables that are essentially non-graded: but they are non-Gorenstein \cite{Hu}. For some recent results and further reading on elementary  components of the Hilbert scheme, see \cite{Hu,Je3} and the references there.
\end{question}
\index{isomorphism classes of AG algebras}%
\begin{question}[Isomorphism classes] 
Give parameters for isomorphism classes of AG algebras. That there are continuous families of isomorphism classes is evident for dimension reasons, and was shown in codimension two as early as J. Brian\c{c}on's thesis (see \cite{Bri}). This problem has been studied extensively for very small colengths or socle degree, and certain Hilbert functions \cite{Bri,CN1,CN, CJN, EH,ER1,ER,Is,Je,Je2}. 
\end{question}
\index{Frobenius structure on ${\Gor}(H)$!when $\cha \kk =p$}%
\begin{question}[Frobenius structure in $\cha \kk=p$]
\label{Frobenius2ques}  
When $\cha \kk =p$, finite, we have an additional Frobenius structure on Gorenstein algebras, that has been described by Larry Smith and coauthors and by representation theorists.   See \cite{MS}, section II.6 on Frobenius powers and Chapter III on ``Poincar\'{e} duality and the Steenrod algebra'', also \cite{SmSt1,SmSt}. It is an open question, whether the set of Gorenstein Hilbert functions, or their symmetric decompositions, when we fix the embedding dimension and socle degree, depend on the characteristic of $\kk$.
\end{question}
\index{characteristic dependence@characteristic dependence!of Hilbert function}%
\begin{question}[Higher dimension Gorenstein algebras] 
What are the implications of symmetric decomposition of the Hilbert functions of AG algebras, for Gorenstein algebras $A$ of higher dimension?
Of course, we can always mod out by a system of parameters. In \cite[Lemma 1.1]{EI} J. Elias and the first author  with L. Avramov  show that the Hilbert function decomposition of a quotient of $A$ by a general enough system of parameters $L$ is an invariant of $A$: that is the dimensions $\dim_{\kk} (0:\mathfrak{m}^i)\cap \mathfrak{m}^j$ for $A/(L)$ depend only on $A$ when the s.o.p $L$ is general enough. (See also the discussion in \cite[\S 5D.i.]{I1}). Higher dimension Gorenstein algebras are also studied in \cite{ER3,RV,ERV}.
\end{question}
\index{ubiquity!partial non-ubiquity}%
\begin{question}[Non-ubiquity] 
Pertaining to Question 1, we conjecture that for $r\ge 3$ there exist algebras $A$ with decompositions
$\mathcal{D}(A)=\left(H(A)_0,\ldots \right)$ such that $\mathcal{D}(A)_{\le a}$ cannot occur as the complete Hilbert function decomposition for some algebra $B$; such partial decompositions we term non-ubiquitous. In section \ref{nonubiquitycod4sec} we gave a partial result in codimension four, where we restrict the Macaulay dual generator of $B$ to have high enough order.
We have similar results in other codimensions at least three, but no example where we have proven non-ubiquity. 
\end{question}
\index{Jordan type and Lefschetz properties}%
\begin{question}[Jordan type, and Lefschetz Properties] 
The multiplication map $m_\ell$ for $\ell\in \mathfrak{m}\subset A$ on $A$ is nilpotent, so over any field $\\k$ it has a \emph{Jordan type} $P_\ell$, a partition of $n=\dim_{\kk}A$ given by the conjugacy class of $m_\ell$. These satisfy natural closure conditions and can be evaluated on both $A$ and $A^\ast$.  Weak Lefschetz and Lefschetz properties can be phrased in terms of the Jordan type of $\ell$: strong Lefschetz is equivalent to the Jordan type $P_\ell=H(A)^\vee$ the conjugate partition to $H(A)$ (regarded as a partition), and weak Lefschetz is that the number of parts of $P_\ell$ is the Sperner number of $A$, the maximum value of $H(A)_i$. See \cite[Chap 3]{H-W}. Although studied in codimension 2 (\cite[Chapter 2]{I1},\cite[Proposition~3.15]{H-W},\cite{AIK})
the study for  algebras of higher codimension is nascent.  There has been a past tendency to focus on strong and weak Lefschetz, which are special cases, but there is recent motion in the direction of understanding other Jordan types, particularly when $A$ is graded
\cite{AIK,AIKY,BI,CGo,Gon,IMM}. 
\index{GorHR@$\Gor_H(R)$!several irreducible components}%
In a work in progress \cite{IM} the authors show that semicontinuity properties of Jordan type, and the semicontinuity of symmetric decompositions \cite[\S 4.1]{I1} for a given Hilbert function can combine to show there is an infinite set of families $\Gor_H(R)$ having several irreducible components.\footnote{That $\Gor_H(R)$ for $H=(1,3,3,2,1,1)$ has several irreducible components is shown using only the semicontinuity of symmetric decomposition, and dimension counts for the symmetric strata in \cite[\S 4.1]{I1}.}
\end{question}
\index{central simple modules}%
\index{symmetric decomposition of Hilbert function!for given other filtrations}%
\begin{question}[Central simple modules, other filtrations] 
In place of intersecting the $\mathfrak{m}-$adic and Loewy $0:\mathfrak{m}^j$ filtrations, we may make similar constructions using other pairs, for example intersecting the $L^i$ and $ {0:L^j}$ filtrations where $L$ may be an element of $\mathfrak{m}$ or an ideal in $A$. Which results concerning the symmetric decompositions extend more broadly?  What can we say about tensor products? These questions are studied implicitly by T. Harima and J.~Watanabe in their concept of ``central simple modules''  \cite{HW,HW2,HW3}. They in \cite{HW2} discuss non-standard grading: their work uses both the action of multiplication by $z$ and the $\mathfrak{m}$-adic filtrations. See also \cite[{\S} 4.1]{H-W}. 
There is further work by many on multigradings, and filtrations by powers of ideal as \cite{Ba,RV,TV}; in the context of Gorenstein Artin algebras there is a special duality that can be explored \cite{BI}.
\end{question}
\begin{question}[Non-standard grading] 
Although we have worked in the paper with the standard grading on the
ring $R={\kk}\{x_1,\ldots,x_r\}$ where the variables have weights one, the concepts apply to any non-negative grading, see \cite{IMM,KK}.  An Artinian Gorenstein algebra $A$ of relative coinvariants is naturally homogenous for a non-standard grading on the variables; by ignoring the original weights one determines a local AG algebra whose associated graded algebra has a symmetric decomposition \cite{MCIM}.
\end{question}
\begin{ack} 
We appreciate conversations with Oana Veliche, Marilina  Rossi, and with Ela Celikbas, who communicated an early version of \cite{ACY}.  We appreciate comments of and discussion with Larry Smith and Joachim Jelisiejew. The first author appreciates his many conversations over the years with Jacques Emsalem, whose insight concerning Artinian Gorenstein local rings, and whose fundamental note \cite{Em} have been important in this work. We are greatly appreciative of comments by the referee, which led to many clarifications.
The second author was partially supported by CIMA -- Centro de Investiga\c{c}\~{a}o em Matem\'{a}tica e Aplica\c{c}\~{o}es, Universidade de \'{E}vora, project UIDB/04674/2020 (FCT -- Funda\c{c}\~{a}o para a Ci\^{e}ncia e Tecnologia), and by FCT project ``Comunidade Portuguesa de Geometria Alg\'{e}brica'', PTDC/MAT-GEO/0675/2012. Parts of this work were done while the second author was visiting math departments of Northeastern University, University of Campinas, KU~Leuven, and University of Connecticut. He thanks them for their hospitality.
\end{ack}

\small
\footnotesize

\printindex
\end{document}